\documentclass{amsart}

\usepackage{amsmath,amssymb,amsthm}
\usepackage{braket}
\usepackage{color}
\usepackage{mathrsfs}
\input{Dynkin.sty}

\newtheorem{theorem}{Theorem}
\newtheorem{definition}[theorem]{Definition}
\newtheorem{prop}[theorem]{Proposition}
\newtheorem{lemma}[theorem]{Lemma}
\newtheorem{cor}[theorem]{Corollary}

\newtheorem{rem}[theorem]{Remark}

\newtheorem*{thmA}{Theorem A}
\newtheorem*{thmB}{Theorem B}
\newtheorem*{thmC}{Theorem C}

\numberwithin{theorem}{section}
\numberwithin{equation}{section}
\newcommand{\W}{\mathcal{W}}
\newcommand{\Z}{\mathbb{Z}}
\newcommand{\C}{\mathbb{C}}
\newcommand{\g}{\mathfrak{g}}
\newcommand{\h}{\mathfrak{h}}
\newcommand{\z}{\mathfrak{z}}
\newcommand{\slf}{\mathfrak{sl}}
\newcommand{\gl}{\mathfrak{gl}}
\newcommand{\osp}{\mathfrak{osp}}
\newcommand{\spf}{\mathfrak{sp}}

\newcommand{\ad}{\operatorname{ad}}

\newcommand{\la}{\lambda}
\newcommand{\der}{\partial}
\newcommand{\e}{\mathrm{e}}
\newcommand{\Ker}{\mathrm{Ker}}
\newcommand{\Img}{\mathrm{Im}}

\newcommand{\Id}{\mathrm{Id}}
\newcommand{\dst}{d_{\mathrm{st}}}
\newcommand{\dne}{d_{\mathrm{ne}}}
\newcommand{\dch}{d_{\chi}}
\newcommand{\dstz}{{\dst}_{(0)}}
\newcommand{\dnez}{{\dne}_{(0)}}
\newcommand{\dchz}{{\dch}_{(0)}}
\newcommand{\bJ}{\bar{J}}
\newcommand{\bPhi}{\bar{\Phi}}
\newcommand{\tS}{\widetilde{S}}
\newcommand{\td}{\widetilde{d}}
\newcommand{\tE}{\widetilde{E}}
\newcommand{\tmu}{\widetilde{\mu}}

\newcommand{\pC}{C'}
\newcommand{\hC}{\widehat{C}}
\newcommand{\hpC}{\widehat{C}'}
\newcommand{\Cinf}{\mathcal{C}}
\newcommand{\res}{\mathrm{res}}
\newcommand{\inv}{(\hspace{0.5mm}\cdot\hspace{0.5mm}|\hspace{0.5mm}\cdot\hspace{0.5mm})}
\newcommand{\foral}{\mathrm{for}\ \mathrm{all}}
\newcommand{\conf}{\Delta}
\newcommand{\p}{{\epsilon}}
\newcommand{\X}{y}
\newcommand{\kh}{\nu}

\newcommand{\str}{\mathrm{str}}
\newcommand{\Vtau}{V^{\tau_{k}}(\g_{0})}
\newcommand{\Vtaun}{V^{\tau_{k}}(\g_{\leq0})}
\newcommand{\Fch}{F^{\mathrm{ch}}(\g_{+})}
\newcommand{\Fne}{F(\g_{\frac{1}{2}})}
\newcommand{\Hi}{\mathcal{H}}
\newcommand{\F}{\mathcal{F}}
\newcommand{\V}{\mathcal{V}}
\newcommand{\E}{\mathcal{E}}
\newcommand{\Po}{\mathcal{P}}
\newcommand{\Del}{\Delta^{\Gamma}}
\newcommand{\Dp}{\Delta_{>0}}
\newcommand{\Dn}{\Delta_{\frac{1}{2}}}
\newcommand{\Pp}{\Pi^{\Gamma}}
\newcommand{\JS}{S(\g_{0}[t^{-1}]t^{-1})}
\newcommand{\JSn}{S(\g_{\leq0}[t^{-1}]t^{-1})}
\newcommand{\palpha}{p(\alpha)}
\newcommand{\pbeta}{p(\beta)}
\newcommand{\pgamma}{p(\gamma)}
\newcommand{\ealpha}{e_{\alpha}}
\newcommand{\ebeta}{e_{\beta}}
\newcommand{\egamma}{e_{\gamma}}

\title{Screening Operators for $\W$-algebras}
\author{Naoki Genra}
\address{Research Institute for Mathematical Sciences, Kyoto University,
 Kyoto 606-8502 JAPAN}
\email{gnr@kurims.kyoto-u.ac.jp}

\begin{document}
\maketitle

\begin{abstract}
Let $\g$ be a simple finite-dimensional Lie superalgebra with a non-degenerate supersymmetric even invariant bilinear form, $f$ a nilpotent element in the even part of $\g$, $\Gamma$ a good grading of $\g$ for $f$ and $\W^{k}(\g,f;\Gamma)$ the (affine) $\W$-algebra associated with $\g,f,k,\Gamma$ defined by the generalized Drinfeld-Sokolov reduction. In this paper, we present each $\W$-algebra as the intersection of kernels of the screening operators, acting on the tensor vertex superalgebra of an affine vertex superalgebra and a neutral free superfermion vertex superalgebra. As applications, we prove that the $\W$-algebra associated with a regular nilpotent element in $\osp(1,2n)$ is isomorphic to the $\W B_{n}$-algebra introduced by Fateev and Lukyanov, and that the $\W$-algebra associated with a subregular nilpotent element in $\slf_{n}$ is isomorphic to the $\W^{(2)}_{n}$-algebra introduced by Feigin and Semikhatov.
\end{abstract}

\section{Introduction}\label{Introduction sec}

Let $\g$ be a simple finite-dimensional Lie superalgebra with a non-degenerate supersymmetric even invariant bilinear form $\inv$, $f$ be a nilpotent element in the even part of  $\g$, $\Gamma$ be a good grading of $\g$ for $f$, denoted by
\begin{align*}
\Gamma:\g=\bigoplus_{j\in\frac{1}{2}\Z}\g_{j}
\end{align*}
and $\W^{k}(\g,f;\Gamma)$ be the (affine) $\W$-algebra defined as the BRST cohomology associated with $\g,f,k$ and $\Gamma$ (\cite{FF90,KRW}). The $\W$-algebra is a $\frac{1}{2}\Z_{\geq0}$-graded vertex superalgebra and it is conformal if its level is not critical i.e. $k\neq-h^{\vee}$, where $h^{\vee}$ is the dual Coxeter number of $\g$. For fixed $\g$, $f$ and $k$, the vertex superalgebra structure of the $\W$-algebra $\W^{k}(\g,f;\Gamma)$ does not depend on the choice of the good grading $\Gamma$, although the conformal grading does; see \cite{AKM, BG}.

In this paper, we present an arbitrary $\W$-algebra as the intersection of kernels of the screening operators. In the special case that the good grading $\Gamma$ can be chosen such that $\g_{0}$ is a Cartan subalgebra, our presentation gives a free field realization in terms of the screening operators, which recovers a result of Feigin and Frenkel \cite{FF92} in the case that $\g$ is a simple Lie algebra and $f$ is a regular nilpotent element. As applications, we prove that the $\W$-algebra associated with a regular nilpotent element in $\osp(1,2n)$ is isomorphic to the $\W B_{n}$-algebra introduced by Fateev and Lukyanov, and that the $\W$-algebra associated with a subregular nilpotent element in $\slf_{n}$ is isomorphic to the $\W^{(2)}_{n}$-algebra introduced by Feigin and Semikhatov. These resluts were conjectured in \cite{IMP, FOR, Wat} and \cite{ACGHR}, respectively.

\smallskip
To clarify our results, let $\Delta$ be a set of roots of $\g$ and $\Pi$ be a set of simple roots compatible with $\Gamma$. For $j\in\frac{1}{2}$, we denote by $\Delta_{j}$ the set of roots whose root vector lies in $\g_{j}$ and set $\Pi_{j}=\Pi\cap\Delta_{j}$. Then, there is a decomposition $\Pi=\Pi_{0}\sqcup\Pi_{\frac{1}{2}}\sqcup\Pi_{1}$; see e.g. \cite{EK, Ho}. Denote by $\Del=\Delta\backslash\Delta_{0}$ the restricted root system associated with $\Gamma$ (\cite{BG}) and by
\begin{align*}
\Pp=\{\alpha\in\Dp\mid\alpha\ \mathrm{is}\ \mathrm{indecomposable}\ \mathrm{in}\ \Dp\}
\end{align*}
the base of $\Del$. Then, $\Pp=\Pp_{\frac{1}{2}}\sqcup\Pp_{1}$, where $\Pp_{j}=\Pp\cap\Delta_{j}$. Let $Q_{0}$ be a root lattice of $\g_{0}$. The equivalence relation on $\Pp$ defined by
\begin{align*}
\alpha\sim\beta\iff\alpha-\beta\in Q_{0}
\end{align*}
for $\alpha,\beta\in\Pp$ gives the quotient set $[\Pp]$ of $\Pp$. Let $[\beta]$ be the equivalent class of $\beta\in\Pp$ in $[\Pp]$. Since the equivalence relation on $\Pp$ is homogeneous with respect to the grading, the decomposition of $\Pp$ induces that of $[\Pp]$:
\begin{align*}
[\Pp]=[\Pp_{\frac{1}{2}}]\sqcup[\Pp_{1}].
\end{align*}

Let $\Vtau$ be an affine vertex superalgebra associated with the subalgebra $\g_{0}$ of $\g$ with degree $0$ and its invariant bilinear form $\tau_{k}$ (see \eqref{tauk eq} for the definition), and $\Fne$ be a neutral free superfermion vertex superalgebra associated with the subspace $\g_{\frac{1}{2}}$ of $\g$ with degree $\frac{1}{2}$ (\cite{KRW}).

\begin{thmA}[Theorem \ref{main1 thm}]
For generic $k$, the $\W$-algebra $\W^{k}(\g,f;\Gamma)$ is isomorphic to the vertex subalgebra of $\Vtau\otimes\Fne$, which is the intersection of kernels of the screening operators $Q_{[\beta]}$ for all $[\beta]\in[\Pp]$:
\begin{align}\label{intro1 thm}
\W^{k}(\g,f;\Gamma)\simeq\bigcap_{[\beta]\in[\Pp]}\Ker\ Q_{[\beta]}.
\end{align}
\end{thmA}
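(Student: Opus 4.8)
The plan is to realize $\W^{k}(\g,f;\Gamma)$ as a subalgebra of $\Vtau\otimes\Fne$ via the Drinfeld--Sokolov BRST complex, and then to identify the BRST differential with a sum of screening operators modulo an exact piece. First I would recall that the BRST complex computing $\W^{k}(\g,f;\Gamma)$ is
\begin{align*}
C(\g,f;\Gamma)=V^{k}(\g)\otimes\Fch\otimes\Fne,\qquad d_{\chi}=\dst+\dch+(\text{correction}),
\end{align*}
where $\Fch$ is the charged ghost system attached to $\g_{+}=\bigoplus_{j>0}\g_{j}$. The key structural input is the decomposition of the differential into its "standard" part $\dst$ (the Lie algebra cohomology differential for $\g_{+}$) and the part coming from the character $\chi=(f|\cdot)$. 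I would pass to the cohomology of $\dst$ first: standard arguments (as in Kac--Roan--Wakimoto, or Feigin--Frenkel in the principal case) show $H(C,\dst)\cong\Vtau\otimes\Fne$, with the $\g_{0}$-currents acquiring the shifted bilinear form $\tau_{k}$ precisely because of the ghost contributions of the adjoint action of $\g_{0}$ on $\g_{+}$. Then the induced differential $\bar d_{\chi}$ on $H(C,\dst)=\Vtau\otimes\Fne$ is what must be shown to equal $\sum_{[\beta]\in[\Pp]}Q_{[\beta]}$ up to lower-order terms, and the $\W$-algebra is its zeroth cohomology.

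The second block of the argument is the explicit identification of $\bar d_{\chi}$ with the screening operators. Here I would compute $\bar d_{\chi}$ on generators: it picks out, from the full sum over $\alpha\in\Dp$ of terms $\sim\int (\text{ghost})_{\alpha}\,(e_{\alpha}+\chi(e_{\alpha}))$, exactly the contributions indexed by indecomposable restricted roots — decomposable ones are killed or rendered exact after passing to $\dst$-cohomology, and two indecomposable roots in the same $Q_{0}$-coset contribute to the same screening current. This is the point of introducing $[\Pp]=[\Pp_{\frac{1}{2}}]\sqcup[\Pp_{1}]$: each class $[\beta]$ gives one screening operator $Q_{[\beta]}$, built from a $\g_{0}$-part (a vertex operator for the $\g_{0}$-weight, possibly dressed by a $\beta$-root vector of $\g_{0}$ when the coset is nontrivial) and a superfermion part when $\beta\in\Pp_{\frac{1}{2}}$. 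I would verify directly that $Q_{[\beta]}^{2}=0$ and $[Q_{[\alpha]},Q_{[\beta]}]=0$ so that $\bigcap_{[\beta]}\Ker Q_{[\beta]}$ is a well-defined vertex subalgebra, and that it is stable.

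The third block is the comparison: show $\W^{k}(\g,f;\Gamma)=H^{0}(\Vtau\otimes\Fne,\bar d_{\chi})$ coincides with $\bigcap_{[\beta]}\Ker Q_{[\beta]}$. One inclusion is immediate since each $Q_{[\beta]}$ is (a summand of) $\bar d_{\chi}$, so the $\W$-algebra lands in every kernel. For the reverse inclusion and the vanishing of higher cohomology I would use a good almost-linear filtration on $\Vtau\otimes\Fne$ (by conformal weight or by the grading $\Gamma$) for which the associated graded of $\bar d_{\chi}$ is the Koszul-type differential $\sum_{[\beta]}(\bar d_{\chi})_{[\beta]}^{\mathrm{lin}}$; then a spectral sequence / Künneth argument over the commuting pieces shows $H^{>0}$ of $\gr\bar d_{\chi}$ vanishes and $H^{0}$ has the expected character, and genericity of $k$ guarantees the spectral sequence degenerates and the characters match $\W^{k}(\g,f;\Gamma)$. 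The main obstacle I anticipate is exactly this last step — controlling the cohomology of the induced differential $\bar d_{\chi}$ past the linear approximation, i.e. proving that the nonlinear (operator-product) corrections to $\sum Q_{[\beta]}$ do not change the cohomology. This requires a carefully chosen filtration making $\bar d_{\chi}$ homogeneous-up-to-lower-order together with a vanishing theorem for the linearized complex, and it is where the genericity hypothesis on $k$ is essential; the purely combinatorial bookkeeping of which $\alpha\in\Dp$ survive into $[\Pp]$ is routine by comparison.
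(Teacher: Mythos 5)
Your overall skeleton (Drinfeld--Sokolov complex, take the $\dst$-cohomology first, identify the residual differential with screenings) is the same as the paper's, but there are two concrete problems. First, the statement that ``standard arguments show $H(C,\dst)\cong\Vtau\otimes\Fne$'' is wrong as written: the $\dst$-cohomology of the reduced complex is $\Vtau\otimes H(\g_{+},\C)\otimes\Fne$, and the higher Lie superalgebra cohomology is not a nuisance to be discarded --- its degree-one piece $H^{1}(\g_{+},\C)=\bigoplus_{\alpha\in\Pp}\C\psi_{\alpha}$ is exactly what produces the modules $M_{[\beta]}$ in which the screening operators take values. With your misstatement, ``the induced differential $\bar d_{\chi}$ on $\Vtau\otimes\Fne$'' would be an endomorphism of $\Vtau\otimes\Fne$, which is incompatible with the maps $Q_{[\beta]}:\Vtau\otimes\Fne\to M_{[\beta]}\otimes\Fne$ you are trying to recover; the indecomposability condition defining $\Pp$ and the $Q_{0}$-cosets enter precisely through the computation of $H^{1}(\g_{+},\C)$, not through ad hoc cancellations of decomposable roots. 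Second, and more seriously, you give no mechanism for proving the first-page computation at generic $k$. In the paper this is the technical heart: one passes to the classical limit $\pC_{\infty}$, computes $H(\pC_{\infty},\dstz)\simeq\JS\otimes H(\g_{+},\C)$ by an explicit Koszul-pair decomposition, and then transfers this to generic $k$ by a flatness/upper-semicontinuity argument over $\C[\X]$ for the family $\hpC_{\X}$; indeed ``generic'' is \emph{defined} by $H(\pC_{k},\dstz)\simeq H(\pC_{\infty},\dstz)$. Your proposal locates the genericity elsewhere, in a final ``spectral sequence degenerates and characters match'' step for $\bar d_{\chi}$, which is not where the difficulty lies.

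That last block of your plan is also largely unnecessary. The vanishing of the higher cohomology of the reduced complex is a known Kac--Wakimoto result quoted for all $k$, and once $E_{1}\simeq\Vtau\otimes H(\g_{+},\C)\otimes\Fne$ is established there is nothing in negative charge, so $H^{0}(E_{1},Q)=\Ker Q$ and the reverse inclusion is automatic; there are no ``nonlinear corrections'' to control, because the total induced differential on $E_{1}$ is exactly $Q=d_{1}+\td_{2}$ and a second filtration argument gives $H(C_{k},d_{(0)})\simeq H(E_{1},Q)$ directly. What you do omit and would need is the nontrivial identification of the abstract vertex operator of the class $\psi_{\alpha}$ with the explicit series $S^{\alpha}(z)$ of \eqref{Screening def}, which in the paper requires comparing the translation operator on $H^{1}(\pC_{k},\dstz)$ with $L_{-1}$ acting on $M_{[\alpha]}$ via the Sugawara field on $\Vtau$. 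As it stands, the proposal has a genuine gap at the computation of the $\dst$-cohomology for generic $k$ and misplaces the role of the genericity hypothesis.
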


See \eqref{Screening eq1} and \eqref{Screening eq2} for the definitions of the screening operators $Q_{[\beta]}$.

We note that Theorem A describes the image of the Miura map defined in \cite{KW2}, see Section \ref{Miura sec}.

\smallskip
In the special case that $\g_{0}$ is a Cartan subalgebra $\h$, the vertex superalgebra $\Vtau$ is isomorphic to the Heisenberg vertex algebra $\Hi$ associated with $\h$. Moreover, since the equivalence relation on $\Pp$ is trivial and $\Delta_{0}$ is empty, the set $[\Pp]$ is equal to $\Pi$. Let $\chi$ be a linear function on $\g$ defined by $\chi(u)=(f|u)$ for all $u\in\g$, $\alpha(z)$ be the vertex operator on $\Hi$ associated with $\alpha\in\Pi$, where we identify the dual space $\h^{*}$ with $\h$ via the fixed bilinear form, and $\Phi_{\beta}(z)$ be the vertex operator on $\Fne$ associated with $\beta\in\Pi_{\frac{1}{2}}$. Then we obtain a more explicit presentation of the $\W$-algebra.

\begin{thmB}[Theorem \ref{main2 thm}]
Assume that $\Gamma$ can be chosen so that $\g_{0}$ is a Cartan subalgebra $\h$ of $\g$. Then, for generic $k$, the $\W$-algebra $\W^{k}(\g,f;\Gamma)$ is isomorphic to the vertex subalgebra of $\Hi\otimes\Fne$, which is the intersection of kernels of the screening operators:
\begin{align*}
\W^{k}(\g,f;\Gamma)\simeq\bigcap_{\begin{subarray}{c}\alpha\in\Pi_{1}\\ \chi(e_{\alpha})\neq0\end{subarray}}\Ker\int\e^{-\frac{1}{\kh}\int\alpha(z)}\ dz\cap\bigcap_{\alpha\in\Pi_{\frac{1}{2}}}\Ker\int:\e^{-\frac{1}{\kh}\int\alpha(z)}\Phi_{\alpha}(z):dz,
\end{align*}
where $\kh=\sqrt{\mathstrut k+h^{\vee}}$.
\end{thmB}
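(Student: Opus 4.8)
The plan is to derive Theorem B as the special case of Theorem A in which $\g_0$ is the Cartan subalgebra $\h$. First I would invoke the identifications already recorded in the text preceding the statement: when $\g_0=\h$, the affine vertex superalgebra $\Vtau$ collapses to the Heisenberg vertex algebra $\Hi$ attached to $\h$ with the bilinear form $\tau_k$, the root lattice $Q_0$ is trivial, $\Delta_0=\emptyset$, so the equivalence relation on $\Pp$ is trivial and $[\Pp]=\Pi$. Moreover $\Pp=\Pi$ itself, and the decomposition $[\Pp]=[\Pp_{\frac12}]\sqcup[\Pp_1]$ becomes $\Pi=\Pi_{\frac12}\sqcup\Pi_1$ (there is no $\Pi_0$ since $\g_0=\h$). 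Thus \eqref{intro1 thm} reads $\W^k(\g,f;\Gamma)\simeq\bigcap_{\beta\in\Pi}\Ker\,Q_{[\beta]}=\bigcap_{\beta\in\Pi_{\frac12}}\Ker\,Q_\beta\cap\bigcap_{\beta\in\Pi_1}\Ker\,Q_\beta$.

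The substance of the proof is then to unwind the definitions \eqref{Screening eq1} and \eqref{Screening eq2} of $Q_{[\beta]}$ in this degenerate situation and check that they become the explicit integrals in the statement. For $\beta\in\Pi_1$: the general screening operator $Q_{[\beta]}$, built from the residue of a field valued in $\Vtau$, reduces — since $\g_0=\h$ contributes only a lattice-vertex-operator factor $\e^{-\frac1{\kh}\int\beta(z)}$ and the structure constant that multiplies it is proportional to $\chi(e_\beta)=(f|e_\beta)$ — to $\int\e^{-\frac1{\kh}\int\beta(z)}\,dz$ up to a nonzero scalar, and the kernel condition is vacuous exactly when $\chi(e_\beta)=0$; this explains the restriction $\chi(e_\alpha)\neq0$ in the first big intersection. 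For $\beta\in\Pi_{\frac12}$: here the superfermion factor $\Fne$ does not degenerate, the field picks up the vertex operator $\Phi_\beta(z)$ on $\Fne$ attached to the root vector $e_\beta\in\g_{\frac12}$, and $Q_{[\beta]}$ becomes $\int{:}\e^{-\frac1{\kh}\int\beta(z)}\Phi_\beta(z){:}\,dz$, again up to rescaling (and one should note that for $\beta\in\Pi_{\frac12}$ the coefficient $\chi(e_\beta)$ automatically vanishes, since $f\in\g_0$ pairs trivially with $\g_{\frac12}$, so no nonvanishing condition is imposed). Finally I would observe that rescaling each screening operator by a nonzero constant does not change its kernel, so the intersection in Theorem A matches the one in the statement on the nose.

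The only genuinely delicate point — hence the main obstacle — is verifying that under the degeneration $\g_0\rightsquigarrow\h$ the abstractly-defined operators $Q_{[\beta]}$ literally specialize to these lattice/fermion integrals, including getting the normalization $\kh=\sqrt{k+h^\vee}$ right: this amounts to computing $\tau_k$ restricted to $\h$ and confirming that the Heisenberg field $\beta(z)$ has the two-point function producing the exponent $-\tfrac1\kh$, and to tracking how the ghost and fermionic contributions in the BRST differential that enter the definition of $Q_{[\beta]}$ collapse. I expect this to be a direct but bookkeeping-heavy computation with the formulas \eqref{Screening eq1}--\eqref{Screening eq2}, and I would also check the edge case where some $\Pi_1$ or $\Pi_{\frac12}$ is empty (then the corresponding intersection is all of $\Hi\otimes\Fne$ or $\Hi$), so that the formula degenerates correctly to known free-field realizations, e.g. the Feigin--Frenkel presentation when $f$ is regular in a simple Lie algebra and $\Pi_{\frac12}=\emptyset$.
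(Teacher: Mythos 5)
Your proposal is correct and follows essentially the same route as the paper: specialize Theorem \ref{main1 thm} using $\Pp=\Pi$ with trivial equivalence relation, identify $V^{\tau_{k}}(\h)$ with the Heisenberg algebra $\Hi$ via $\tau_{k}|_{\h}=(k+h^{\vee})\inv$ (so $\kh=\sqrt{k+h^{\vee}}$), and verify $S^{\alpha}(z)=\e^{-\frac{1}{\kh}\int\alpha(z)}$; the ``delicate point'' you flag is settled in the paper not by re-tracking ghost and fermionic contributions in the BRST differential but directly from Proposition \ref{Screening prop}, whose relations (vacuum conditions, commutation with the Heisenberg modes, and $\der S^{\alpha}(z)=-\frac{1}{\kh}:\alpha(z)S^{\alpha}(z):$, using $[e_{\alpha},e_{-\alpha}]=(e_{\alpha}|e_{-\alpha})t_{\alpha}$) uniquely characterize that lattice vertex operator. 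One minor slip: $\chi$ vanishes on $\g_{\frac{1}{2}}$ because $f\in\g_{-1}$ and the form pairs $\g_{j}$ with $\g_{-j}$, not because ``$f\in\g_{0}$''; this is harmless since \eqref{Screening eq1} carries no $\chi$-factor in any case.
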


Theorem B for a simple Lie algebra and $f$ a regular nilpotent element was known by Feigin and Frenkel \cite{FF92}, and for a simple Lie superalgebra and $f$ a superprincipal\footnote{There exists an odd nilpotent element $F\in\g_{-\frac{1}{2}}$ with $[F,F]=f$ ($f$ being regular nilpotent) and these  two vectors form part of $\osp(1,2)\subset\g$.} nilpotent element by Heluani and Dias \cite{HD} (see Remark 3.4 of \cite{HD}).

\smallskip
As applications of Theorem A and B, we have the following results that were conjectured in \cite{IMP, FOR, Wat} and \cite{ACGHR}.

\begin{thmC}[Theorem \ref{WBn thm} and Theorem \ref{W2n thm}]\ \\
\vspace{-5mm}
\begin{enumerate}
\item The $\W$-algebra associated with a regular nilpotent even element $f_{reg}$ in $\osp(1,2n)$ at level $k\neq-n-\frac{1}{2}$ is isomorphic to the Fateev-Lukyanov $\W B_{n}$-algebra (\cite{FL}):
\begin{align*}
\W^{k}(\osp(1,2n),f_{reg};\Gamma)\simeq\W B_{n}.
\end{align*}
\item The $\W$-algebra associated with a subregular nilpotent element $f_{sub}$ in $\slf_{n}$ at level $k\neq-n$ is isomorphic to the Feigin-Semikhatov $\W^{(2)}_{n}$-algebra (\cite{FS}):
\begin{align*}
\W^{k}(\slf_{n},f_{sub};\Gamma)\simeq\W^{(2)}_{n}.
\end{align*}
\end{enumerate}
\end{thmC}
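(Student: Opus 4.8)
The plan is to deduce both isomorphisms from the screening-operator presentations in Theorems A and B, by checking that in each of the two cases the relevant $\W$-algebra and the target algebra ($\W B_n$, resp.\ $\W^{(2)}_n$) sit inside the \emph{same} free field vertex superalgebra as intersections of kernels of the \emph{same} screening operators. For part (1), I would first record that for $\g=\osp(1,2n)$ the regular nilpotent $f_{reg}$ is in fact superprincipal: there is an odd $F\in\g_{-1/2}$ with $[F,F]=f_{reg}$, so a good grading $\Gamma$ exists with $\g_0=\h$ and $\g_{1/2}$ one-dimensional. Thus Theorem B applies directly: $\W^k(\osp(1,2n),f_{reg};\Gamma)$ is the joint kernel on $\Hi\otimes\Fne$ of $n-1$ bosonic screenings $\int \e^{-\frac{1}{\kh}\int\alpha(z)}\,dz$ (one for each $\alpha\in\Pi_1$) and one fermionic screening $\int {:}\e^{-\frac{1}{\kh}\int\alpha(z)}\Phi_\alpha(z){:}\,dz$ for the unique $\alpha\in\Pi_{1/2}$. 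I would then compare these screening currents, with their explicit $\kh=\sqrt{k+h^\vee}$-dependent exponents, against the defining screenings of the Fateev--Lukyanov $\W B_n$-algebra in \cite{FL}: the $B_n$ Cartan data together with the short simple root producing a fermionic screening is exactly what appears there, so after matching the bilinear form (Gram matrix of $\Pi$ for $\osp(1,2n)$ against the $\W B_n$ normalization) and the Heisenberg rank ($n$ bosons plus the neutral fermion), the two joint kernels coincide. One must also check that both sides are freely and strongly generated by fields of the expected conformal weights $2,4,\dots,2n$ together with the weight-$(n+\tfrac12)$ field, so that equality of the two vertex algebras (not merely containment) follows; this is where a character/graded-dimension argument, or the known strong-generation type results for $\W$-algebras from a good grading, enters.

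For part (2), $f_{sub}\in\slf_n$ is \emph{not} principal, so $\g_0$ cannot be taken to be a Cartan subalgebra and I must use the general Theorem A rather than Theorem B. For the subregular nilpotent in $\slf_n$ one chooses a good grading $\Gamma$ whose degree-zero part $\g_0$ is $\gl_1\oplus\slf_2$ (equivalently $\mathfrak{gl}_2$), coming from the partition $(n-1,1)$; then $\Vtau$ is the affine vertex algebra $V^{\tau_k}(\gl_2)$ at the shifted level, $\g_{1/2}$ is again small (in fact for this grading one can arrange $\g_{1/2}=0$, so $\Fne=\C$), and the restricted root base $\Pp$, with the $Q_0$-equivalence relation, yields the classes $[\Pp]=[\Pp_1]$ of size $n-1$. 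I would then exhibit the screening operators $Q_{[\beta]}$ of \eqref{Screening eq1}: they act on $V^{\tau_k}(\gl_2)$, and comparing with Feigin--Semikhatov's construction of $\W^{(2)}_n$ in \cite{FS} — which is likewise realized inside a $\widehat{\gl}_2$ (or $\widehat{\slf}_2\oplus\text{Heisenberg}$) vertex algebra as the kernel of screening currents indexed by the $n-1$ simple roots of the ``long'' part — I would match the levels, the Heisenberg direction, and the screening currents one-to-one. Again equality of the two joint kernels is upgraded to an isomorphism of vertex algebras by checking both sides carry the same strong generators ($T,G^\pm,$ and the weight-$1$ current, plus the $\W^{(2)}_n$ ``spin'' fields) and the same OPEs on those generators, or by comparing characters.

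The main obstacle I expect is the explicit matching of normalizations and the verification that containment is equality. Writing down $Q_{[\beta]}$ concretely requires unwinding the definitions in \eqref{Screening eq1}--\eqref{Screening eq2}, the bilinear form $\tau_k$, and the identification of $\h^*$ with $\h$; getting the exponents and the $\kh$-dependence to agree \emph{on the nose} with the published $\W B_n$ and $\W^{(2)}_n$ screening currents is a bookkeeping task that is easy to get wrong by a scalar. More substantively, Theorems A and B only give $\W^k(\g,f;\Gamma)$ as a joint kernel, and a priori the target algebra could be a proper subalgebra (or superalgebra) of that kernel; ruling this out is the real content. I would handle it by a dimension count in each conformal weight — using the known freeness and the explicit set of strong generators of $\W^k(\g,f;\Gamma)$ for a good grading on one side, and the defining generators of $\W B_n$ / $\W^{(2)}_n$ on the other — so that the graded characters agree, forcing the inclusion of vertex algebras to be an isomorphism. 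The genericity-of-$k$ hypothesis in Theorems A and B gives the result for generic level, and the stated ranges ($k\neq -n-\tfrac12$, resp.\ $k\neq -n$) are then obtained by the standard flatness/deformation argument, since both sides are families of vertex algebras over the level with no jumps in character away from the excluded critical-type values.
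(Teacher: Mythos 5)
Your generic-level argument for part (1) is essentially the paper's: Theorem B applies because $f_{reg}$ in $\osp(1,2n)$ admits the Dynkin grading with $\g_{0}=\h$ and $\dim\g_{\frac12}=1$, and after rescaling the simple roots the screenings become exactly the Fateev--Lukyanov ones. But for both parts the two places you flag as ``bookkeeping'' or ``standard'' are where the real content lies, and your proposed treatment of them would fail. First, in part (2) the screenings do \emph{not} match one-to-one. With the good grading $(0,1,\dots,1)$ one has $\g_{0}\simeq\gl_{2}$, $\g_{\frac12}=0$, $\Pp=\{\alpha_{1}+\alpha_{2},\alpha_{2},\dots,\alpha_{n-1}\}$, and $[\Pp]$ has $n-2$ classes (the class $[\alpha_{2}]$ contains two roots, and its screening degenerates to a single term since $\chi(e_{\alpha_{1}+\alpha_{2}})=0$); so Theorem A presents $\W^{k}(\slf_{n},f_{sub};\Gamma)$ inside $V^{\tau_{k}}(\gl_{2})$ as the joint kernel of only $n-2$ operators, whereas Feigin--Semikhatov cut out $\W^{(2)}_{n}$ inside a rank-$(n+1)$ free-field algebra by $n$ screenings $A_{1},\dots,A_{n-1},Q$. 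The missing ingredient is the injective Wakimoto realization $\pi$ of $V^{\tau_{k}}(\gl_{2})=V^{k+n-2}(\slf_{2})\otimes(\text{Heisenberg})$ into the Feigin--Semikhatov free-field algebra: its image lies in $\Ker\int A_{1}\,dz\cap\Ker\int Q\,dz$ (these two screenings are absorbed by the Wakimoto image, they do not correspond to anything on the $\W$-side), and under $\pi$ the operators $S^{\alpha_{i}}$, $i\geq2$, become the $A_{i}$. Without this embedding the two kernel descriptions live in different ambient spaces and cannot be compared; the paper then gets equality at generic $k$ by checking that the explicit fields $E,F$ (mapping to the generators $\E,\F$ of $\W^{(2)}_{n}$) are killed by all $S^{\alpha_{i}}$.

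Second, the passage from generic $k$ to all $k\neq-n-\frac12$ (resp.\ $k\neq-n$) cannot be done by ``no jumps in character'': the only descriptions available at this stage are the screening-kernel ones, and the characters of such joint kernels \emph{do} jump at non-generic levels -- that is exactly why Theorems A and B, and the FL/FS kernel characterizations, carry a genericity hypothesis. The paper's actual mechanism is different: the Miura map $\mu$ is defined at every non-critical level, its image is identified with the joint kernel at generic level (Lemma \ref{Miura lem}), and the identities matching strong generators ($\mu(G')=G$, $\mu(X^{i})=W_{2n-2i}$ in case (1), and $\pi(E)=\E$, $\pi(F)=\F$ in case (2)) are propagated to all non-critical $k$ by the $\C[\X]$-family of $\W$-algebras and Miura maps (Lemmas \ref{Miura2 lem}, \ref{Miura3 lem}), together with injectivity of $\mu$ -- for $\slf_{n}$ this is Lemma \ref{Miura1 lem}, valid at every level because $\slf_{n}$ is a Lie algebra, while for $\osp(1,2n)$ one uses the family version. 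Since $\W B_{n}$ and $\W^{(2)}_{n}$ are defined by explicit generators at all levels, this yields the isomorphism outside the critical level without any character comparison. Your proposal, as written, has no map between the two sides at non-generic $k$ and no justification that the relevant characters are constant, so this step is a genuine gap rather than a routine deformation argument.
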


For $n=1$, Theorem C (1) was shown in \cite{KRW} as the $\W B_{1}$-algebra is the Neveu-Schwarz vertex superalgebra (or the Super Virasoro vertex superalgebra) (\cite{FL}). We apply Theorem B to $\osp(1,2n)$ and $f_{reg}$ to prove Theorem C (1) for generic $k$, and use a result related to the Miura map to extend the proof to non-critical level $k$, see the proof of Theorem \ref{WBn thm} for the details.

For $n=3$, Theorem C (2) follows from results of \cite{KRW, FS}, for the $\W^{(2)}_{3}$-algebra is the Bershadsky-Polyakov vertex algebra (\cite{B,P}). We apply Theorem A to $\g=\slf_{n}$ and $f=f_{sub}$, and prove Theorem C (2) using the Wakimoto construction of the affine vertex algebra of $\slf_{2}$ for generic $k$ and a result related to the Miura map for non-critical level $k$, see the proof of Theorem \ref{W2n thm}.

\smallskip
The paper is organized as follows. In Section \ref{W-def subsec}, we review the definition of the $\W$-algebra $\W^{k}(\g,f;\Gamma)$. In Section \ref{KW subsec}, we recall the definition of the complex $C_{k}$ that is used to compute $\W^{k}(\g,f;\Gamma)$. In Section \ref{classical subsec}, we consider the classical limit of $C_{k}$, which is used to prove Theorem A. In Section \ref{Screening sec}, we set up some notations to state our main results. In particular, we define the screening operators, which consist of the vertex operators on $\Fne$ and certain operators $S^{\alpha}(z)$. In Section \ref{main thm sec}, we deduce Theorem B from Theorem A. Section \ref{proof sec} is devoted to the proof of Theorem A. We define the weight filtration on $C_{k}$ in Section \ref{weight subsec} and study the first spectral sequence $E_{1}$ in Section \ref{E1 subsec} by using the classical limit. The key fact that we use to analyze $E_{1}$ is Lemma \ref{Cinf lemma}, which is proved in Section \ref{Cinf subsec}. In Section \ref{VAstr sec}, we give another construction of $S^{\alpha}(z)$ and derive some results from it. In Section \ref{proof subsec}, we prove Theorem A. Section \ref{Miura sec} is devoted to the study of the Miura map. In Section \ref{application sec}, we prove Theorem C.

\vspace{3mm}

{\it Acknowledgments}\quad This paper is the master thesis of the author and he wishes to express his gratitude to his supervisor Tomoyuki Arakawa for suggesting the problems and lots of advice to improve this paper. He thanks Hiroshi Yamauchi and Kazuya Kawasetsu for useful comments and discussions. He is deeply grateful to Toshiro Kuwabara for explaining him to the geometric aspects of the $\W^{(2)}_{n}$-algebra and giving him the crucial insight to prove Theorem C (2).

\section{Affine $\W$-algebras}

\subsection{Definitions}\label{W-def subsec}
In this section we review the definition of the (affine) $\W$-algebras via the generalized Drinfeld-Sokolov reduction given by Kac, Roan and Wakimoto \cite{KRW}. Let $\g$ be a simple Lie superalgebra with a non-degenerate even supersymmetric invariant bilinear form $\inv$, $f$ be a nilpotent element of the even part of $\g$, $k$ be a complex number and $\Gamma$ be a good grading of $\g$ with respect to $f$, where a $\frac{1}{2}\Z$-grading
\begin{align*}
\Gamma:\g=\bigoplus_{j\in\frac{1}{2}\Z}\g_{j}
\end{align*}
is called good if $f\in\g_{-1}$ and $\ad f:\g_{j}\rightarrow\g_{j-1}$ is injective for $j\geq\frac{1}{2}$ and surjective for $j\leq\frac{1}{2}$. Let $x$ be a semisimple element in $\g$ such that $\g_{j}=\{u\in\g\mid[x,u]=j u\}$. Choose a Cartan subalgebra $\h$ containing $x$, a basis $\{e_{i}\}_{i\in I}$ of $\h$, where $I=\{1,\ldots,\mathrm{rank}\ \g\}$, the root system $\Delta$ of $(\g,\h)$ and a set of simple roots $\Pi$ such that $\ealpha\in\g_{\geq0}$ for all positive roots $\alpha$, where $e_{\alpha}$ is a non-zero root vector. We normalize the invariant bilinear form $\inv$ on $\g$ by $(\theta|\theta)=2$, where $\theta$ is a highest root of the even part of $\g$. Let $\Delta_{j}=\{\alpha\in\Delta\mid\ealpha\in\g_{j}\}$ and set $\Pi_{j}=\Pi\cap\Delta_{j}$ for all $j\in\frac{1}{2}\Z$, $\chi$ be a linear function on $\g$ defined by $\chi(u)=(f|u)$ for $u\in\g$, $\palpha\in\Z/2\Z$ be the parity of $\ealpha$ for all $\alpha\in I\cup\Delta$ and $c_{\alpha,\beta}^{\gamma}\in\C$ be a structure constant for all $\alpha,\beta,\gamma\in I\cup\Delta$ such that $[\ealpha,\ebeta]=\sum_{\gamma\in I\cup\Delta}c_{\alpha,\beta}^{\gamma}\egamma$. Denote by $\kappa_{\g}$ the Killing form of $\g$ and by $\str_{\g}$ the supertrace on $\g$.

Given a vertex superalgebra $V$ (see \cite{FBZ,K98} for the definitions), we denote by $
Y(A,z)=A(z)=\sum_{n\in\Z}A_{(n)}z^{-n-1}$ the vertex operator for $A\in V$ and by $[A_{\la}B]=\sum_{n=0}^{\infty}(A_{(n)}B)\frac{\la}{n!}$ the $\la$-bracket of $A,B\in V$.

Consider the vertex superalgebra
\begin{align*}
C^{k}(\g,f;\Gamma)=V^{k}(\g)\otimes\Fch\otimes\Fne,
\end{align*}
where $V^{k}(\g)$ is the universal affine vertex superalgebra associated with $\g$, $\Fch$ is the charged free superfermion vertex superalgebra associated with $\g_{+}\oplus\g_{+}^{*}$ with reversed parities and $\Fne$ is the neutral free superfermion vertex superalgebra associated with $\g_{\frac{1}{2}}$ (see \cite{KRW} for definitions), where
\begin{align*}
\g_{+}=\bigoplus_{j>0}\g_{j}.
\end{align*}
Denote by $u(z)$ the vertex operator on $V^{k}(\g)$ associated with $u\in\g$, by $\varphi_{\alpha}(z),\varphi^{\alpha}(z)$ the vertex operators on $\Fch$ associated with $\ealpha, \ealpha^{*}$ for $\alpha\in\Dp$, by $\Phi_{\alpha}(z)$ the vertex operator on $\Fne$ associated with $\alpha\in\Dn$. Note that
\begin{align*}
&[u_{\la}v]=[u,v]+k(u|v)\la,\quad
[{\varphi_{\alpha}}_{\la}\varphi^{\alpha'}]=\delta_{\alpha,\alpha'},\\
&[{\varphi_{\alpha}}_{\la}\varphi_{\alpha'}]=[{\varphi^{\alpha}}_{\la}\varphi^{\alpha'}]=0,\quad
[{\Phi_{\beta}}_{\la}\Phi_{\beta'}]=(f|[e_{\beta},e_{\beta'}])
\end{align*}
for all $u,v\in\g$, $\alpha,\alpha'\in\Dp$ and $\beta,\beta'\in\Dn$. Using the charges in $\Fch$ defined by $\mathit{charge}(\varphi_{\alpha})=-1$ and $\mathit{charge}(\varphi^{\alpha})=1$, one has the induced charge decomposition of $C^{k}(\g,f;\Gamma)$. Let
\begin{align*}
d(z)=\dst(z)+\dne(z)+\dch(z),
\end{align*}
where
\begin{align*}
&\dst(z)=\sum_{\alpha\in\Dp}(-1)^{\palpha}:e_{\alpha}(z)\varphi^{\alpha}(z):
-\frac{1}{2}\sum_{\alpha,\beta,\gamma\in\Dp}(-1)^{\palpha\pgamma}c_{\alpha,\beta}^{\gamma}:\varphi_{\gamma}(z)\varphi^{\alpha}(z)\varphi^{\beta}(z):,\\
&\dne(z)=\sum_{\alpha\in\Dn}:\varphi^{\alpha}(z)\Phi_{\alpha}(z):,\quad
\dch(z)=\sum_{\alpha\in\Dp}\chi(e_{\alpha})\varphi^{\alpha}(z).
\end{align*}
Then $d_{(0)}^{2}=0$ and $d_{(0)}$ is a differential compatible with the charge decomposition of $C^{k}(\g,f;\Gamma)$. Thus, $(C^{k}(\g,f;\Gamma),d_{(0)})$ is a cohomology complex. We denote the cohomology of this complex by
\begin{align}
\label{W-alg eq}
\W^{k}(\g,f;\Gamma)=H^{\bullet}(C^{k}(\g,f;\Gamma),d_{(0)}),
\end{align}
which inherits a vertex superalgebra structure from $C^{k}(\g,f;\Gamma)$. Since the $j$-th comohology of \eqref{W-alg eq} is zero if $j\neq0$ \cite{KW2, KW3}, $\W^{k}(\g,f;\Gamma)$ is the $0$-th cohomology of \eqref{W-alg eq}. This vertex superalgebra is called the {\it $\W$-algebra associated with $\g$, $f$, $k$, and $\Gamma$}.  We note that the vertex superalgebra structure of the $\W$-algebra does not depend on the choice of the good grading $\Gamma$ \cite{BG,AKM} (but the conformal grading of the $\W$-algebra depends on $\Gamma$).

\subsection{Decomposition of complex}\label{KW subsec}
We have \cite{KW2,KW3} the decomposition of the complex $C^{k}(\g,f,\Gamma)=C^{-}\otimes C_{k}$ such that $H(C^{-})=\C$ and $C_{k}$ is a vertex superalgebra generated by $J^{u}(z)$ for all $u\in\g_{\leq0}$, $\varphi^{\alpha}(z)$ for all $\alpha\in\Dp$ and $\Phi_{\alpha}(z)$ for all $\alpha\in\Dn$, where
\begin{align*}
J^{u}(z)=u(z)+\sum_{\alpha,\beta\in\Dp}(-1)^{\palpha}c_{u,\beta}^{\alpha}:\varphi_{\alpha}(z)\varphi^{\beta}(z):
\end{align*}
for $u\in\g_{\leq0}$. Note that
\begin{align*}
[{J^{u}}_{\la}J^{v}]=J^{[u,v]}+\tau_{k}(u|v)\la,\quad[{\varphi^{\alpha}}_{\la}J^{u}]=\sum_{\beta\in\Dp}c_{u,\beta}^{\alpha}\varphi^{\beta},\quad[{\Phi_{\gamma}}_{\la}J^{u}]=0
\end{align*}
for all $u,v\in\g_{\leq0}$, $\alpha\in\Dp$, $\gamma\in\Dn$, where
\begin{align}
\label{tauk eq}
\tau_{k}(u|v)=k(u|v)+\frac{1}{2}\kappa_{\g}(u|v)-\frac{1}{2}\kappa_{\g_{0}}(u|v).
\end{align}
The vertex superalgebra $C_{k}$ is a subcomplex of $C^{k}(\g,f;\Gamma)$ and we have
\begin{align*}
&[{\dst}_{\la}J^{u}]=-\sum_{\begin{subarray}{c}\beta\in\Dp\\ \alpha\in I\cup\Delta_{\leq 0}\end{subarray}}(-1)^{\palpha}c_{u,\beta}^{\alpha}:J^{\ealpha}\varphi^{\beta}:+\sum_{\beta\in\Dp}(\mathit{a}_{k}(u|\ebeta)\der+\mathit{b}_{k}(u|\ebeta)\la)\varphi^{\beta},\\
&[{\dne}_{\la}J^{u}]=\sum_{\begin{subarray}{c}\beta\in\Dp\\ \alpha\in\Dn\end{subarray}}c_{u,\beta}^{\alpha}:\Phi_{\alpha}\varphi^{\beta}:,\quad
[{\dch}_{\la}J^{u}]=\sum_{\beta\in\Dp}\chi([u,\ebeta])\varphi^{\beta},\\
&[{\dst}_{\la}\varphi^{\alpha}]=-\frac{1}{2}\sum_{\beta,\gamma\in\Dp}(-1)^{\palpha\pbeta}c_{\beta,\gamma}^{\alpha}:\varphi^{\beta}\varphi^{\gamma}:,\quad
[{\dne}_{\la}\Phi_{\alpha}]=\sum_{\beta\in\Dn}\chi([\ebeta,\ealpha])\varphi^{\beta},\\
&[{\dne}_{\la}\varphi^{\alpha}]=[{\dch}_{\la}\varphi^{\alpha}]=[{\dst}_{\la}\Phi_{\alpha}]=[{\dch}_{\la}\Phi_{\alpha}]=0,
\end{align*}
where
\begin{align*}
&\mathit{a}_{k}(v|w)=\str_{\g}((\ad v)p_{+}(\ad w))+k(v|w),\\
&\mathit{b}_{k}(v|w)=\str_{\g}(p_{+}(\ad v)(\ad w))+k(v|w)
\end{align*}
for $v,w\in\g$ and $p_{+}:\g\rightarrow\g_{+}$ is a projection map. Since $H(C^{-})=\C$, $H(C^{k}(\g,f;\Gamma))=H(C_{k})$. Moreover, $H^{n}(C_{k})=0$ for all $n\neq0$, see \cite{KW2,KW3}. Therefore
\begin{align*}
\W^{k}(\g,f;\Gamma)=H(C_{k})=H^{0}(C_{k}).
\end{align*}

Let $\pC_{k}$ be a vertex subalgebra of $C_{k}$ generated by $J^{u}(z)$ for all $u\in\g_{\leq0}$ and $\varphi^{\alpha}(z)$ for all $\alpha\in\Dp$. Since ${\dst}_{(0)}^{2}=0$ and by the above formulas, the vertex superalgebra $\pC_{k}$ is a complex with the differential ${\dst}_{(0)}$. We have
\begin{align}
\label{pCk eq}
C_{k}=\pC_{k}\otimes\Fne.
\end{align}

Let $\Vtaun$ be a vertex subalgebra of $\pC_{k}$ generated by $J^{u}(z)$ for all $u\in\g_{\leq0}$ and $\Vtau$ be a vertex subalgebra of $\Vtaun$ generated by $J^{u}(z)$ for all $u\in\g_{0}$. Then
\begin{align}
\label{Vtaun eq}C_{k}^{(0)}=\Vtaun\otimes\Fne,\\
\label{dst0 eq}{\dst}_{(0)}(\Vtau\otimes\Fne)=0.
\end{align}

\subsection{Classical Limit}\label{classical subsec}
Provided that $k+h^{\vee}\neq0$, define
\begin{align*}
\p=\frac{1}{k+h^{\vee}},\quad
\bJ^{u}(z)=\p J^{u}(z),\quad
\bPhi_{\alpha}(z)=\p\Phi_{\alpha}(z)
\end{align*}
for $u\in\g_{\leq0}$ and $\alpha\in\Dn$.

\begin{lemma}
\label{qcal lemma}
Suppose $k+h^{\vee}\neq0$ and replace $f$ by $\p^{-1}f$. Then
\begin{align*}
&[{\bJ^{u}}_{\la}\bJ^{v}]=\p(\bJ^{[u,v]}+\bar{\tau}_{\p}(u|v)\la),\quad
[{\varphi^{\alpha}}_{\la}\bJ^{u}]=\p\sum_{\beta\in\Dp}c_{u,\beta}^{\alpha}\varphi^{\beta},\\
&[{\bPhi}_{\alpha\hspace{0.1mm}\la}\bPhi_{\beta}]=\p\cdot\chi([e_{\alpha},e_{\beta}]),\hspace{10mm}
[{\bJ^{u}}_{\la}{\bPhi}_{\alpha}]=[{\varphi^{\alpha}}_{\la}\varphi^{\beta}]=[{\varphi^{\alpha}}_{\la}\bPhi_{\beta}]=0,
\end{align*}
\begin{align*}
[{\dst}_{\la}\bJ^{u}]=-\sum_{\begin{subarray}{c}\beta\in\Dp\\ \alpha\in I\cup\Delta_{\leq 0}\end{subarray}}(-1)^{\palpha}c_{u,\beta}^{\alpha}:\bJ^{\ealpha}\varphi^{\beta}:+\sum_{\beta\in\Dp}(\bar{\mathit{a}}_{\p}(u|\ebeta)\der+\bar{\mathit{b}}_{\p}(u|\ebeta)\la)\varphi^{\beta},
\end{align*}
\begin{align*}
[{\dne}_{\la}\bJ^{u}]=\sum_{\begin{subarray}{c}\beta\in\Dp\\ \alpha\in\Dn\end{subarray}}c_{u,\beta}^{\alpha}:\bPhi_{\alpha}\varphi^{\beta}:,\hspace{29mm}
&[{\dch}_{\la}\bJ^{u}]=\sum_{\beta\in\Dp}\chi([u,\ebeta])\varphi^{\beta},\\
[{\dst}_{\la}\varphi^{\alpha}]=-\frac{1}{2}\sum_{\beta,\gamma\in\Dp}(-1)^{\palpha\pbeta}c_{\beta,\gamma}^{\alpha}:\varphi^{\beta}\varphi^{\gamma}:,\quad
&[{\dne}_{\la}\bPhi_{\alpha}]=\sum_{\beta\in\Dn}\chi([\ebeta,\ealpha])\varphi^{\beta},\\
[{\dne}_{\la}\varphi^{\alpha}]=[{\dch}_{\la}\varphi^{\alpha}]=[{\dst}_{\la}\bPhi_{\alpha}]=[{\dch}_{\la}\bPhi_{\alpha}]=0,\hspace{5mm}
\end{align*}
where
\begin{align*}
&\bar{\tau}_{\p}(u|v)=(u|v)+\frac{\p}{2}\{\kappa_{\g}(u|v)-\kappa_{\g_{0}}(u|v)-2h^{\vee}(u|v)\},\\
&\bar{\mathit{a}}_{\p}(u|v)=(u|v)+\p\{\str_{\g}((\ad u)p_{+}(\ad v))-h^{\vee}(u|v)\},\\
&\bar{\mathit{b}}_{\p}(u|v)=(u|v)+\p\{\str_{\g}(p_{+}(\ad u)(\ad v))-h^{\vee}(u|v)\}.
\end{align*}
\end{lemma}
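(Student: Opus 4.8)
The statement is an immediate consequence of unwinding the definitions after the rescaling $J^{u}\mapsto \bJ^{u}=\p J^{u}$, $\Phi_{\alpha}\mapsto\bPhi_{\alpha}=\p\Phi_{\alpha}$ and $f\mapsto\p^{-1}f$, together with the $\la$-bracket relations already recorded in Section~\ref{KW subsec}. So the plan is purely computational: substitute and collect powers of $\p$.

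First I would treat the ``algebra'' relations. From $[{J^{u}}_{\la}J^{v}]=J^{[u,v]}+\tau_{k}(u|v)\la$, multiplying by $\p^{2}$ on the generators and by $\p$ on the right-hand side gives $[{\bJ^{u}}_{\la}\bJ^{v}]=\p\bJ^{[u,v]}+\p^{2}\tau_{k}(u|v)\la$, and I would define $\bar{\tau}_{\p}(u|v)=\p\,\tau_{k}(u|v)$, then expand $\tau_{k}(u|v)=k(u|v)+\tfrac12\kappa_{\g}(u|v)-\tfrac12\kappa_{\g_{0}}(u|v)$ and substitute $k=\p^{-1}-h^{\vee}$ to obtain the displayed formula $\bar{\tau}_{\p}(u|v)=(u|v)+\tfrac{\p}{2}\{\kappa_{\g}(u|v)-\kappa_{\g_{0}}(u|v)-2h^{\vee}(u|v)\}$. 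The relation $[{\varphi^{\alpha}}_{\la}J^{u}]=\sum_{\beta}c_{u,\beta}^{\alpha}\varphi^{\beta}$ picks up a single factor of $\p$ (since only $J^{u}$ is rescaled, not $\varphi^{\alpha}$), giving the stated $\p\sum_{\beta}c_{u,\beta}^{\alpha}\varphi^{\beta}$. The neutral relation $[{\Phi_{\alpha}}_{\la}\Phi_{\beta}]=(f|[e_{\alpha},e_{\beta}])=\chi([e_{\alpha},e_{\beta}])$ becomes, after rescaling both $\Phi$'s and replacing $f$ by $\p^{-1}f$, $[{\bPhi_{\alpha}}_{\la}\bPhi_{\beta}]=\p^{2}\cdot\p^{-1}\chi([e_{\alpha},e_{\beta}])=\p\,\chi([e_{\alpha},e_{\beta}])$; the vanishing brackets $[{\varphi^{\alpha}}_{\la}\varphi^{\beta}]$, $[{J^{u}}_{\la}\Phi_{\alpha}]=0$ (from $[{\Phi_{\gamma}}_{\la}J^{u}]=0$) and $[{\varphi^{\alpha}}_{\la}\Phi_{\beta}]$ are unchanged.

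Next the differential relations. For $[{\dst}_{\la}J^{u}]$: the first sum $-\sum(-1)^{\palpha}c_{u,\beta}^{\alpha}:J^{\ealpha}\varphi^{\beta}:$ becomes $-\sum(-1)^{\palpha}c_{u,\beta}^{\alpha}:\bJ^{\ealpha}\varphi^{\beta}:$ after multiplying the left side by $\p$ (the $J^{\ealpha}$ on the right supplies the matching $\p$), so no net factor of $\p$ survives there; the second sum $\sum(\mathit{a}_{k}(u|\ebeta)\der+\mathit{b}_{k}(u|\ebeta)\la)\varphi^{\beta}$ picks up exactly one factor of $\p$, and I set $\bar{\mathit{a}}_{\p}=\p\,\mathit{a}_{k}$, $\bar{\mathit{b}}_{\p}=\p\,\mathit{b}_{k}$; substituting $k=\p^{-1}-h^{\vee}$ into $\mathit{a}_{k}(u|v)=\str_{\g}((\ad u)p_{+}(\ad v))+k(u|v)$ yields $\bar{\mathit{a}}_{\p}(u|v)=(u|v)+\p\{\str_{\g}((\ad u)p_{+}(\ad v))-h^{\vee}(u|v)\}$ and likewise for $\bar{\mathit{b}}_{\p}$. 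For $[{\dne}_{\la}J^{u}]=\sum c_{u,\beta}^{\alpha}:\Phi_{\alpha}\varphi^{\beta}:$, multiplying the left by $\p$ and writing $\Phi_{\alpha}=\p^{-1}\bPhi_{\alpha}$ on the right leaves $[{\dne}_{\la}\bJ^{u}]=\sum c_{u,\beta}^{\alpha}:\bPhi_{\alpha}\varphi^{\beta}:$ with no surviving power of $\p$. For $[{\dch}_{\la}J^{u}]=\sum\chi([u,\ebeta])\varphi^{\beta}$, replacing $f$ by $\p^{-1}f$ multiplies $\chi$ by $\p^{-1}$ while rescaling $J^{u}$ multiplies by $\p$, so the two cancel and the formula is unchanged. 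The relations $[{\dst}_{\la}\varphi^{\alpha}]$ and $[{\dne}_{\la}\varphi^{\alpha}]=[{\dch}_{\la}\varphi^{\alpha}]=0$ involve no $J$ or $\Phi$ and are untouched; $[{\dne}_{\la}\Phi_{\alpha}]=\sum\chi([\ebeta,\ealpha])\varphi^{\beta}$ becomes $[{\dne}_{\la}\bPhi_{\alpha}]=\sum\chi([\ebeta,\ealpha])\varphi^{\beta}$ (rescaling $\Phi$ by $\p$ cancels against $\chi\mapsto\p^{-1}\chi$); and $[{\dst}_{\la}\Phi_{\alpha}]=[{\dch}_{\la}\Phi_{\alpha}]=0$ remain zero after rescaling.

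Finally I would note there is essentially no obstacle: the only thing to be careful about is bookkeeping of the powers of $\p$ — which generators carry a rescaling factor and which do not — and the elementary substitution $k=\p^{-1}-h^{\vee}$ in the coefficients $\tau_{k}$, $\mathit{a}_{k}$, $\mathit{b}_{k}$. The ``main obstacle'' is merely verifying that the combination $f\mapsto\p^{-1}f$ (hence $\chi\mapsto\p^{-1}\chi$) exactly compensates the rescaling of $J$ and $\Phi$ in every term where $\chi$ appears, so that $\dch$-type brackets and $[{\dne}_{\la}\bPhi_{\alpha}]$ come out with no net power of $\p$, while $[{\bPhi_{\alpha}}_{\la}\bPhi_{\beta}]$ acquires the single factor $\p$. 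This being checked term by term against the list in Section~\ref{KW subsec}, the lemma follows.
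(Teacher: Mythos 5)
Your proposal is correct: the paper states Lemma \ref{qcal lemma} without proof, treating it as exactly the routine rescaling computation you carry out, namely substituting $\bJ^{u}=\p J^{u}$, $\bPhi_{\alpha}=\p\Phi_{\alpha}$, $\chi\mapsto\p^{-1}\chi$ and $k=\p^{-1}-h^{\vee}$ into the $\la$-bracket list of Section \ref{KW subsec} and collecting powers of $\p$. Your bookkeeping (including the reading that $\chi$ in the resulting formulas is the original one, so the $\dch$-type brackets and $[{\dne}_{\la}\bPhi_{\alpha}]$ carry no net power of $\p$ while $[{\bPhi_{\alpha}}_{\la}\bPhi_{\beta}]$ acquires exactly one) matches the intended statement.
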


Replace $\p$ in the $\la$-brackets in Lemma \ref{qcal lemma} by an independent variable $\X$ and denote by $\hC_{\X}$ the vertex superalgebra over $\C[\X]$ generated by $\bJ^{u}(z)$ for all $u\in\g_{\leq0}$, $\varphi^{\alpha}(z)$ for all $\alpha\in\Dp$ and $\bPhi_{\alpha}(z)$ for all $\alpha\in\Dn$. The vertex superalgebra $\hC_{\X}$ over $\C[\X]$ is a cochain complex over $\C[\X]$ with the differential $d_{(0)}$ by Lemma \ref{qcal lemma}. Define a vertex superalgebra $\hC_{\p}$ by
\begin{align*}
\hC_{\p}=\hC_{\X}\underset{\C[\X]}{\otimes}\C_{\p}
\end{align*}
for $\p\in\C$, where $\C_{\p}$ is a one-dimensional $\C[\X]$-module defined by $\X=\p$ in $\C_{\p}$. The vertex superalgebra $\hC_{\p}$ is also a complex with the differential $d_{(0)}$. By construction,
\begin{align*}
\hC_{\p}\simeq C_{k}
\end{align*}
if $\p=(k+h^{\vee})^{-1}$.

Let $\hpC_{\X}$ be a vertex subalgebra over $\C[\X]$ in $\hC_{\X}$ generated by $\bJ^{u}(z)$ for all $u\in\g_{\leq0}$ and $\varphi^{\alpha}(z)$ for all $\alpha\in\Dp$. The vertex superalgebra $\hpC_{\X}$ over $\C[\X]$ is a chain complex over $\C[\X]$ with the differential ${\dst}_{(0)}$ by Lemma \ref{qcal lemma} and the formula ${\dst}_{(0)}^{2}=0$. Denote $\hpC_{\p}=\hpC_{\X}\otimes_{\C[\X]}\C_{\p}$. Thus,
\begin{align*}
\hpC_{\p}\otimes\Fne=\hC_{\p}
\end{align*}
and $\hpC_{\p}$ is also a complex with the differential ${\dst}_{(0)}$. Let
\begin{align*}
C_{\infty}=\hC_{\X}|_{\X=0},\quad\pC_{\infty}=\hpC_{\X}|_{\X=0}.
\end{align*}
Then $C_{\infty}$ is a complex with the differential $d_{(0)}$. It has a supercommutative vertex superalgebra structure by Lemma \ref{qcal lemma} in the case of $\p=0$ and analogously for $\pC_{\infty}$ and ${\dst}_{(0)}$. The vertex superalgebra $C_{\infty}$ is a Poisson vertex superalgebra through the Poisson $\la$-bracket
\begin{align*}
\{A_{\la}B\}=\frac{1}{\X}[A_{\la}B]_{\X=0},
\end{align*}
for $A,B\in C_{\infty}$, where the $\la$-bracket is that of $\hC_{\X}$. Then $C_{\infty}$ is called the {\it classical limit} of $C_{k}$ and $H(C_{\infty},d_{(0)})$ is called the {\it classical $\W$-algebra associated with $\g$, $f$, $k$ and $\Gamma$}. The cohomology $H(C_{\infty},d_{(0)})$ inherits the Poisson vertex superalgebra structure from $C_{\infty}$.

\section{Main Results}

\subsection{Restricted root system and screening operators}\label{Screening sec}
In this section we fix some notations that will be used in the sequel. Let
\begin{align*}
\Del=\Delta\backslash\Delta_{0},
\end{align*}
which is a restricted root system (\cite{BG}),
\begin{align}
\label{Pp eq}
\Pp=\{\alpha\in\Dp\mid\alpha\ \mathrm{is}\ \mathrm{indecomposable}\ \mathrm{in}\ \Dp\}
\end{align}
be the base of $\Del$ such that the positive part of $\Del$ coincides with a set of positive roots in $\Del$ and $Q_{0}$ be the root lattice of $\g_{0}$ i.e. $Q_{0}=\bigoplus_{\alpha\in\Pi_{0}}\Z\alpha$. Set $\Pp_{j}=\Pp\cap\Delta_{j}$. We define the equivalence relation in $\Pp$ by
\begin{align}
\label{Pprel eq}
\alpha\sim\beta\iff\alpha-\beta\in Q_{0}
\end{align}
for $\alpha,\beta\in\Pp$. Denote by $[\Pp]$ the quotient set of $\Pp$ and by $[\alpha]$ the equivalent class of $\alpha\in\Pp$ in $[\Pp]$.

\begin{lemma}
\label{Pp dec lemma}
$\Pp=\Pp_{\frac{1}{2}}\sqcup\Pp_{1}$.
\end{lemma}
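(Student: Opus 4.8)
The statement is that every indecomposable element of $\Dp$ lies in $\Delta_{\frac12}\cup\Delta_1$, i.e.\ that the grading only produces restricted roots in degrees $\frac12$ and $1$, never in higher half-integer degree. The plan is to exploit the defining property of a good grading $\Gamma$: the map $\ad f\colon\g_j\to\g_{j-1}$ is injective for $j\geq\frac12$ and surjective for $j\leq\frac12$. A standard consequence, which I would first isolate, is that $\g_j=0$ for $j>1$ would be false in general (e.g.\ for non-even gradings), so the real content must come from how \emph{indecomposability} interacts with the $\osp(1,2)$-triple $(e,x,f)$ containing $f$. So the first step is to record that $\Del=\Delta\setminus\Delta_0$ is a genuine (possibly non-reduced) root system whose base is $\Pp$, and that the grading value $j(\alpha)$ (defined by $\ealpha\in\g_{j(\alpha)}$, equivalently $j(\alpha)=\alpha(x)$) restricts to a function on $\Del$ that is additive along decompositions in $\Dp$.

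Next I would argue by contradiction: suppose $\alpha\in\Pp$ has $j(\alpha)\geq\frac32$. Since $\ad f\colon\g_{j(\alpha)}\to\g_{j(\alpha)-1}$ is injective and $j(\alpha)-1\geq\frac12>0$, the element $[f,\ealpha]\neq0$ lies in $\g_{j(\alpha)-1}\subseteq\g_{>0}$, hence is a (sum of) root vector(s) for roots in $\Dp$ of strictly smaller degree. Because $f\in\g_{-1}$ is a sum of negative root vectors for roots in $\Delta_0\cup(-\Dp)$ — more precisely $f=\sum_{\gamma}f_\gamma$ with each $\gamma$ a root of degree $1$ — bracketing produces $\alpha-\gamma$ with $\gamma\in\Delta_1$; I would use the $\h$-weight decomposition to pick an actual root $\beta:=\alpha-\gamma\in\Delta$ appearing with nonzero coefficient in $[f,\ealpha]$, so $\beta\in\Dp$ and $j(\beta)=j(\alpha)-1$. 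Then $\alpha=\beta+\gamma$ with $\beta\in\Dp$ and $\gamma\in\Delta_1$. The remaining point is to check $\gamma\in\Dp$ (so that this is a genuine decomposition of $\alpha$ in $\Dp$, contradicting indecomposability): since $\gamma$ has degree $1>0$ it is not in $\Delta_0$, and one checks it is a positive restricted root — if $\gamma$ were a negative restricted root then $-\gamma\in\Dp$ would have degree $-1<0$, impossible. Hence $\alpha=\beta+\gamma$ is a decomposition in $\Dp$, so $\alpha$ is decomposable, contradiction. This forces $j(\alpha)\in\{\frac12,1\}$ for all $\alpha\in\Pp$, which is exactly $\Pp=\Pp_{\frac12}\sqcup\Pp_1$ (the union being disjoint since an element cannot have two distinct degrees).

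The step I expect to be the main obstacle is the bookkeeping in the middle: passing from "$[f,\ealpha]\neq0$" to "there is a specific root $\gamma\in\Delta_1$ with $\alpha-\gamma\in\Dp$", since $f$ is in general a \emph{sum} of root vectors and one must argue on the level of $\h$-weight spaces that some summand survives. This is a finite linear-algebra argument using that $\ad f$ is injective on $\g_{\geq\frac12}$, but care is needed for superalgebras (parities of $c_{\alpha,\beta}^\gamma$) and for non-reduced $\Del$. Alternatively — and this may be cleaner — I would instead invoke directly the structure theory of good gradings and restricted root systems from \cite{BG, EK, Ho}: there $\Del$ is described explicitly and its base $\Pp$ is shown to sit in degrees $\frac12$ and $1$ as part of the decomposition $\Pi=\Pi_0\sqcup\Pi_{\frac12}\sqcup\Pi_1$ already recalled in the introduction, and one deduces $\Pp=\Pp_{\frac12}\sqcup\Pp_1$ by noting every element of $\Pp$ is, up to $Q_0$, a $\Z_{\geq0}$-combination of $\Pi_{\frac12}\sqcup\Pi_1$ in which indecomposability in $\Dp$ forbids any degree-$\geq\frac32$ outcome.
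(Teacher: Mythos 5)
Your main argument is correct, but it proceeds by a genuinely different route than the paper. The paper's proof is purely combinatorial on simple roots: it invokes the decomposition $\Pi=\Pi_{0}\sqcup\Pi_{\frac{1}{2}}\sqcup\Pi_{1}$ from \cite{EK, Ho} (so every simple root has degree at most $1$), concludes that a root $\alpha\in\Pp_{j}$ with $j>1$ cannot be simple, and hence decomposes into two positive roots lying in $\Dp$, contradicting indecomposability. You instead use the defining property of a good grading: injectivity of $\ad f$ on $\g_{\geq\frac{1}{2}}$ gives $[f,e_{\alpha}]\neq 0$ in $\g_{j-1}$ with $j-1\geq\frac{1}{2}$, and since $f$ is a sum of weight vectors $e_{-\gamma}$ with $\gamma\in\Delta_{1}$ (your first description of $f$ as supported on ``$\Delta_{0}\cup(-\Dp)$'' is off, but your ``more precisely'' clause fixes it, as $\h\subset\g_{0}$ forces $f\in\g_{-1}$ to be supported on degree $-1$ root spaces), a weight-space argument yields $\gamma\in\Delta_{1}$ with $\beta=\alpha-\gamma\in\Delta_{j-1}$, so $\alpha=\beta+\gamma$ with $\beta,\gamma\in\Dp$ --- contradiction. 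Note that your positivity check for $\gamma$ and $\beta$ is automatic, since $\Dp=\Delta_{>0}$ is defined by positive degree, not by positivity of the root. Your approach buys self-containedness (it uses only the definition of a good grading, not the cited classification of $\Pi$) and it also makes explicit why both summands can be taken in $\Dp$, a point the paper's one-line ``thus $\alpha$ is decomposed to two positive roots in $\Dp$'' leaves implicit (a decomposition through a degree-$0$ simple root would not by itself be a $\Dp$-decomposition); the paper's proof, in exchange, is shorter once the structure of $\Pi$ is granted. The alternative you sketch at the end --- deducing the statement from the structure theory of good gradings and $\Pi=\Pi_{0}\sqcup\Pi_{\frac{1}{2}}\sqcup\Pi_{1}$ --- is essentially the paper's argument.
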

\begin{proof}
If there exists $\alpha\in\Pp_{j}$ for $j>1$, the positive root $\alpha$ is not simple because
\begin{align*}
\Pi=\Pi_{0}\sqcup\Pi_{\frac{1}{2}}\sqcup\Pi_{1}
\end{align*}
as is shown in \cite{EK, Ho}. Thus, $\alpha$ is decomposed to two positive roots in $\Dp$, which contradicts that $\alpha\in\Pp$.
\end{proof}

Lemma \ref{Pp dec lemma} induces the decomposition
\begin{align*}
[\Pp]=[\Pp_{\frac{1}{2}}]\sqcup[\Pp_{1}].
\end{align*}

\begin{lemma}
Let
\begin{align*}
\C^{[\beta]}=\bigoplus_{\alpha\in[\beta]}\C x_{\alpha}
\end{align*}
be the $\g_{0}$-module defined by
\begin{align*}
u\cdot x_{\alpha}=\sum_{\gamma\in[\beta]}c_{\gamma,u}^{\alpha}x_{\gamma}
\end{align*}
for $u\in\g_{0}$, where the complex number $c_{\gamma,u}^{\alpha}$ is defined by $[e_{\gamma},u]=\sum_{\alpha\in[\beta]}c_{\gamma,u}^{\alpha}e_{\alpha}$. Then $\C^{[\beta]}$ is well-defined.
\end{lemma}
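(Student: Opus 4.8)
The plan is to verify that the formula
\[
u\cdot x_{\alpha}=\sum_{\gamma\in[\beta]}c_{\gamma,u}^{\alpha}x_{\gamma},\qquad u\in\g_{0},
\]
really does define an action of $\g_{0}$ on $\C^{[\beta]}$. There are two things to check: (i) that the sum on the right lands in $\C^{[\beta]}$, i.e. that all the indices $\gamma$ with $c_{\gamma,u}^{\alpha}\neq 0$ indeed lie in the class $[\beta]$, and (ii) that the assignment is a Lie superalgebra action, i.e. $[u,v]\cdot x_{\alpha}=u\cdot(v\cdot x_{\alpha})-(-1)^{p(u)p(v)}v\cdot(u\cdot x_{\alpha})$ (here $\g_{0}$ is purely even if $\Gamma$ is an integral good grading, but to be safe one keeps the sign). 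For (i), the key point is that $[\Pp]$ is the set of $Q_0$-cosets: if $e_\gamma$ appears in $[e_\alpha,u]$ with $u\in\g_0$ a root vector $e_{\delta}$ ($\delta\in\Delta_0$) or a Cartan element, then $\gamma=\alpha-\delta$ (or $\gamma=\alpha$), so $\gamma-\alpha\in Q_0$ and hence $\gamma\sim\alpha$, i.e. $\gamma\in[\alpha]=[\beta]$; moreover $\gamma$ is still a root of the same degree $j\in\{\tfrac12,1\}$ since $\ad\g_0$ preserves each $\g_j$, and one must also note $\gamma\in\Dp$ — this is where one uses that $\alpha\in\Pp$ is indecomposable and that adding an element of $\Delta_0$ cannot move a positive restricted root to a negative one (if $\gamma\in\Delta_0$ then $c_{\gamma,u}^\alpha$ simply does not contribute a basis vector, but in fact the restricted-root structure rules this out for $\alpha\in\Pp$). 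So the action is well-defined as a map into $\C^{[\beta]}$.

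For (ii), the cleanest route is to observe that $\C^{[\beta]}$ is, essentially by construction, a subquotient of $\g$ as a $\g_0$-module, where $\g_0$ acts by the adjoint action. Precisely, let $\mathfrak{n}_{[\beta]}=\bigoplus_{\alpha\in[\beta]}\C e_{\alpha}\subset\g$; I would first check that $\mathfrak{n}_{[\beta]}$ is stable under $\ad\g_0$ (same computation as (i): $[e_\alpha,u]\in\mathfrak{n}_{[\beta]}$ up to a term in $\g_0$, which lies in $\mathfrak{n}_{[\beta]}$ only if it vanishes — so more carefully one takes $\mathfrak{n}_{[\beta]}\oplus(\text{relevant part of }\g_0)$, or simply notes $[\beta]\subset\Dp$ avoids $\Delta_0$), so that $\ad\g_0$ genuinely preserves $\mathfrak{n}_{[\beta]}$. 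Then the map $e_\alpha\mapsto x_\alpha$ is a linear isomorphism $\mathfrak{n}_{[\beta]}\to\C^{[\beta]}$ intertwining $-\ad u$ on the source (note the structure constants are written $c_{\gamma,u}^\alpha$ with $[e_\gamma,u]=\sum c_{\gamma,u}^\alpha e_\alpha$, i.e. $u$ on the right, so this is the action $e_\gamma\mapsto[e_\gamma,u]=-[u,e_\gamma]=-(\ad u)e_\gamma$) with the given action on the target. Since $-\ad$ (equivalently the "right adjoint" action) is a genuine $\g_0$-action on the invariant subspace $\mathfrak{n}_{[\beta]}$, transporting it through the isomorphism gives a genuine $\g_0$-action on $\C^{[\beta]}$. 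This reduces the Jacobi-type identity to the already-known Jacobi identity in $\g$.

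The only real subtlety — and the step I expect to be the main obstacle — is the bookkeeping around $\Delta_0$: making sure that for $\alpha\in[\beta]$ and $u\in\g_0$ the bracket $[e_\alpha,u]$ never produces a root vector $e_\gamma$ with $\gamma\in\Delta_0$ (which would sit outside $\C^{[\beta]}$ and break the module structure). This follows from the structure of the restricted root system $\Del=\Delta\setminus\Delta_0$ together with $\Pp$ being its base: the $Q_0$-coset $[\beta]$ consists of restricted positive roots of a fixed degree $j\in\{\tfrac12,1\}$, and $\g_0$ acting on $\g_j$ cannot land in $\g_0$ since $j\neq 0$; hence $[e_\alpha,u]\in\g_j$ has no component outside $\bigoplus_{\gamma\in\Delta_j}\C e_\gamma$, and its $Q_0$-translate lies in $[\beta]$. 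Once this is pinned down, well-definedness is immediate and (ii) follows formally from the adjoint action of $\g_0$ on the $\ad\g_0$-stable subspace $\bigoplus_{\alpha\in[\beta]}\C e_\alpha\subset\g_j$.
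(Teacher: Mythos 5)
Your step (i) — checking that for $\gamma\in[\beta]$ and $u\in\g_{0}$ the bracket $[e_{\gamma},u]$ only involves root vectors $e_{\alpha}$ with $\alpha\in[\beta]$ — is fine, and in fact more careful than the paper, which takes this stability for granted in the very statement of the lemma and only verifies the bracket identity. The problem is in your step (ii). The map $e_{\alpha}\mapsto x_{\alpha}$ does \emph{not} intertwine $e_{\gamma}\mapsto[e_{\gamma},u]$ with the action defined in the lemma: under your map, $e_{\gamma}\mapsto[e_{\gamma},u]$ becomes $x_{\gamma}\mapsto\sum_{\alpha}c_{\gamma,u}^{\alpha}x_{\alpha}$, whereas the lemma defines $u\cdot x_{\alpha}=\sum_{\gamma}c_{\gamma,u}^{\alpha}x_{\gamma}$; the coefficient of $x_{\gamma}$ in $u\cdot x_{\alpha}$ is the coefficient of $e_{\alpha}$ in $[e_{\gamma},u]$, so the lemma's operator is the \emph{transpose} of the right-bracket operator, not that operator itself (concretely, for $u=e_{\delta}$ with $\delta\in\Delta_{0}$ one action shifts roots by $+\delta$, the other by $-\delta$). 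Moreover, your parenthetical claim that $-\ad$ (the right adjoint action) is ``a genuine $\g_{0}$-action'' on the invariant subspace is false: $u\mapsto-\ad u$ satisfies $[-\ad u,-\ad v]=-(-\ad[u,v])$, i.e.\ it is an anti-homomorphism, not a homomorphism. So as written, transporting that (non-)action through a (non-)intertwiner does not establish that the lemma's formula defines a $\g_{0}$-module structure.

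The two slips cancel in the correct version of your idea: $\C^{[\beta]}$ is (up to the usual super-sign conventions) the \emph{dual} (contragredient) of the $\ad\g_{0}$-stable subspace $\bigoplus_{\alpha\in[\beta]}\C e_{\alpha}\subset\g$, via $x_{\alpha}\mapsto e_{\alpha}^{*}$; the transpose of an anti-action is an action, and duality of modules then gives the lemma. Alternatively, one can avoid the identification altogether and argue as the paper does: verify directly that
\begin{align*}
\sum_{\rho\in[\beta]}\bigl(c_{\gamma,u}^{\rho}c_{\rho,v}^{\alpha}-(-1)^{p(u)p(v)}c_{\gamma,v}^{\rho}c_{\rho,u}^{\alpha}\bigr)=c_{\gamma,[u,v]}^{\alpha},
\end{align*}
which is exactly the coefficient of $e_{\alpha}$ in the Jacobi identity $[e_{\gamma},[u,v]]=[[e_{\gamma},u],v]-(-1)^{p(u)p(v)}[[e_{\gamma},v],u]$ (using the stability from your step (i) so that no terms outside $[\beta]$ contribute). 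Either repair is short, but your argument as stated relies on an intertwining that fails and on an ``action'' that is not one.
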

\begin{proof}
We show that
\begin{align*}
u\cdot v\cdot x_{\alpha}-(-1)^{p(u)p(v)}v\cdot u\cdot x_{\alpha}=[u,v]\cdot x_{\alpha}
\end{align*}
for all $u,v\in\g_{0}$ and $\alpha\in[\beta]$. Thus, it is enough to show that
\begin{align}
\label{Cbeta eq}
\sum_{\rho\in[\beta]}(c_{\gamma,u}^{\rho}c_{\rho,v}^{\alpha}-(-1)^{p(u)p(v)}c_{\gamma,v}^{\rho}c_{\rho,u}^{\alpha})=c_{\gamma,[u,v]}^{\alpha}
\end{align}
for all $\gamma\in[\beta]$. By Jacobi identity,
\begin{align*}
[e_{\gamma},[u,v]]=[[e_{\gamma},u],v]-(-1)^{p(u)p(v)}[[e_{\gamma},v],u].
\end{align*}
This implies \eqref{Cbeta eq}. Therefore the assertion follows.
\end{proof}

\begin{rem}
Let $H(\g_{+},\C)$ be the Lie superalgebra cohomology with the coefficients in the trivial $\g_{+}$-module $\C$. Then $H^{1}(\g_{+},\C)$ is naturally a $\g_{0}$-module induced by the adjoint action of $\g_{0}$ on $\g_{+}$, and
\begin{align*}
H^{1}(\g_{+},\C)\simeq\bigoplus_{[\beta]\in[\Pp]}\C^{[\beta]}
\end{align*}
is the irreducilble $\g_{0}$-module decomposition described in \cite{Kos}.
\end{rem}

Denote by $\hat{\g}_{0}=\g_{0}[t,t^{-1}]\oplus\C K$ the central extension of the loop superalgebra $\g_{0}[t,t^{-1}]$ via the even supersymmetric invariant bilinear form $\tau_{k}$ defined by \eqref{tauk eq}. We extend the $\g_{0}$-module $\C^{[\beta]}$ to a $\g_{0}[t]\oplus\C K$-module by $\g_{0}[t]t=0$ and $K=1$ on $\C^{[\beta]}$. Define a $\hat{\g}_{0}$-module $M_{[\beta]}$ by
\begin{align}
\label{M eq}M_{[\beta]}&=\mathrm{Ind}^{\hat{\g}_{0}}_{\g_{0}[t]\oplus\C K}\C^{[\beta]}
=U(\hat{\g}_{0})\underset{U(\g_{0}[t]\oplus\C K)}{\otimes}\C^{[\beta]}\\
&\simeq\Vtau\otimes\bigoplus_{\alpha\in[\beta]}\C x_{\alpha}.
\end{align}
Then $M_{[\beta]}$ is naturally a $\Vtau$-module. 

\begin{lemma}
\label{Lz lemma}
Suppose that $k+h^{\vee}\neq0$. Let $L(z)$ be the field on $\Vtau$ defined by
\begin{align*}
L(z)=\sum_{n\in\Z}L_{n}z^{-n-2}=\frac{1}{2(k+h^{\vee})}\sum_{i=1}^{\dim\g_{0}}:J^{u^{i}}(z)J^{u_{i}}(z):,
\end{align*}
where $\{u_{i}\}_{i=1}^{\dim\g_{0}}$ is a basis of $\g_{0}$ and $\{u^{i}\}_{i=1}^{\dim\g_{0}}$ is the dual basis of $\g_{0}$ such that $(u^{i}|u_{j})=\delta_{i,j}$. Then $L(z)$ is a Virasoro field on $\Vtau$.
\end{lemma}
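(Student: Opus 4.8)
The plan is to run the standard Sugawara construction, the only point requiring care being that the level of $\Vtau$ is $\tau_{k}$ whereas the dual bases $\{u_{i}\}$, $\{u^{i}\}$ entering $L(z)$ are taken with respect to the restriction of $\inv$ to $\g_{0}$; this restriction is nondegenerate because in the decomposition $\g=\bigoplus_{j}\g_{j}$ only the pairings $\g_{j}\times\g_{-j}$ are nonzero. Put $\Omega(z)=\sum_{i}:J^{u^{i}}(z)J^{u_{i}}(z):$, so $L=\frac{1}{2(k+h^{\vee})}\Omega$. The whole statement reduces to showing
\begin{align*}
[L_{\la}J^{v}]=(\der+\la)J^{v}\qquad\text{for all }v\in\g_{0},
\end{align*}
i.e.\ that every generator $J^{v}$ of $\Vtau$ is a primary field of conformal weight $1$ with respect to $L$; granting this, $[L_{\la}L]=(\der+2\la)L+\frac{c}{12}\la^{3}$ for some $c\in\C$ follows by applying the non-commutative Wick formula to $[\Omega_{\la}\Omega]$ and substituting the known $[\Omega_{\la}J^{v}]$, the normally-ordered pieces reassembling into $L$ and $\der L$ and a scalar surviving only at order $\la^{3}$. (The value of the central charge $c$ is irrelevant for the assertion.)

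So first I would compute $[\Omega_{\la}J^{v}]$ directly from $[{J^{u}}_{\la}J^{v}]=J^{[u,v]}+\tau_{k}(u|v)\la$ and the mode expansion of $(:AB:)_{(n)}$. For degree reasons (with respect to the natural $\Z_{\geq0}$-grading of $\Vtau$ in which $J^{v}$ has degree $1$) one has $\Omega_{(n)}J^{v}=0$ for $n\geq3$ and $\Omega_{(2)}J^{v}\in\C|0\rangle$; the latter scalar is a contraction of $\tau_{k}$ against $\sum_{i}[u_{i},u^{i}]$, which vanishes because the Casimir tensor $\sum_{i}u_{i}\otimes u^{i}$ is super-symmetric, so $\Omega_{(2)}J^{v}=0$. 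The substantive term is $\Omega_{(1)}J^{v}$, which comes out as the sum of two ``level'' contributions, each equal to $J^{w}$ with $w=\sum_{i}\tau_{k}(u_{i}|v)u^{i}$, and one ``structure-constant'' contribution $\sum_{i}(-1)^{p(u_{i})}J^{[u_{i},[u^{i},v]]}$. Here is the place where the normalization is forced: the condition $(\theta|\theta)=2$ gives $\kappa_{\g}=2h^{\vee}\inv$ on $\g$, hence $\tau_{k}(u|v)=(k+h^{\vee})(u|v)-\tfrac{1}{2}\kappa_{\g_{0}}(u|v)$ on $\g_{0}$; writing $\Theta$ for the endomorphism of $\g_{0}$ with $(\Theta v|u)=\kappa_{\g_{0}}(v|u)$, invariance of $\inv$ identifies the structure-constant contribution with $J^{\Theta v}$ (the $\g_{0}$-Casimir on the adjoint representation), while $w=(k+h^{\vee})v-\tfrac{1}{2}\Theta v$; the two copies of $-\tfrac{1}{2}\Theta v$ cancel $\Theta v$, leaving $\Omega_{(1)}J^{v}=2(k+h^{\vee})J^{v}$, and a parallel bookkeeping gives $\Omega_{(0)}J^{v}=2(k+h^{\vee})\der J^{v}$.

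The step I expect to be the main obstacle is precisely this computation of $\Omega_{(1)}J^{v}$: keeping the parity signs straight, recognising the double-bracket sum $\sum_{i}(-1)^{p(u_{i})}[u_{i},[u^{i},v]]$ as $\Theta v$ via invariance of $\inv$, and checking the cancellation of the $\kappa_{\g_{0}}$-terms. This is the mechanism by which the ``dual Coxeter number of $\g_{0}$'' that would otherwise obstruct the naive normalization $\tfrac{1}{2(k+h^{\vee})}$ is absorbed into the $\kappa_{\g_{0}}$-correction of $\tau_{k}$. Everything else is routine: with $[L_{\la}J^{v}]=(\der+\la)J^{v}$ in hand, the Wick-formula computation of $[L_{\la}L]$ as above (equivalently, an appeal to the general fact that an affine vertex superalgebra of a finite-dimensional Lie superalgebra with nondegenerate invariant form at noncritical level carries its Sugawara Virasoro field, once the normalizations are matched) shows $L(z)$ is a Virasoro field on $\Vtau$.
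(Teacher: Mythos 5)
Your proposal is correct, but it proves the lemma by a genuinely different route than the paper. You verify the Sugawara construction directly for the non-standard level form $\tau_{k}$: compute $[\Omega_{\la}J^{v}]$ from $[{J^{u}}_{\la}J^{v}]=J^{[u,v]}+\tau_{k}(u|v)\la$, use $\kappa_{\g}=2h^{\vee}\inv$ to write $\tau_{k}=(k+h^{\vee})\inv-\tfrac{1}{2}\kappa_{\g_{0}}$ on $\g_{0}$, and observe that the two level contractions of the $-\tfrac{1}{2}\kappa_{\g_{0}}$ piece cancel the $\g_{0}$-Casimir (double-bracket) term, so every $J^{v}$ is primary of weight $1$ and the Wick formula then yields the Virasoro relation. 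The paper instead argues structurally: it decomposes $\g_{0}=\bigoplus_{s}\g'_{s}$ into the center and simple ideals, notes $\kappa_{\g_{0}}=2r_{s}\inv$ and hence $\tau_{k}=(k+h^{\vee}-r_{s})\inv$ on $\g'_{s}$, and invokes the known Sugawara construction factor by factor (cf.\ Remark \ref{Vtau rem}), the point being that level plus dual Coxeter number equals $k+h^{\vee}$ uniformly on every factor, so all factors share the normalizer $\tfrac{1}{2(k+h^{\vee})}$. The paper's argument is shorter but leans on the decomposition of $\g_{0}$, the mutual orthogonality of the ideals under $\inv$, and the proportionality of $\kappa_{\g_{0}}$ to $\inv$ on each simple ideal; your computation bypasses all of this structural input (needing only nondegeneracy of $\inv|_{\g_{0}}$, which you justify from the grading) and makes explicit the cancellation mechanism that the paper's bookkeeping of the $r_{s}$'s encodes implicitly. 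The one caution is your parenthetical appeal to a ``general fact'' about Sugawara fields at noncritical level: stated for an arbitrary non-simple $\g_{0}$ with the non-proportional form $\tau_{k}$, that fact is essentially what is being proved, so the direct computation you outline (not the citation) should be regarded as the actual proof.
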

\begin{proof}
Consider the decomposition
\begin{align*}
\g_{0}=\bigoplus_{s=0}^{n}\g'_{s},
\end{align*}
where $\g'_{0}$ is the center of $\g_{0}$ and $\g'_{s}$ is a simple Lie superalgebras for $s>0$. Let $r_{s}$ for $s=1,\ldots,n$ be the complex number defined by
\begin{align*}
\kappa_{\g_{0}}(u|v)=2r_{s}(u|v)
\end{align*}
for $u,v\in\g'_{s}$. Set $r_{0}=0$. Then, by definition,
\begin{align*}
\tau_{k}(u|v)=(k+h^{\vee}-r_{s})(u|v)
\end{align*}
for $u,v\in\g'_{s}$ and $s=0,\ldots,n$. Let $\{u_{s,i}\}_{i=1}^{\dim\g'_{s}}$ be a basis of $\g'_{s}$ and $\{u_{s}^{i}\}_{i=1}^{\dim\g'_{s}}$ be the dual basis of $\g'_{s}$ so that $(u_{s}^{i}|u_{s,j})=\delta_{i,j}$. By the Sugawara construction, we have a Virasoro field on $\Vtau$ defined by
\begin{align*}
L(z)=\frac{1}{2(k+h^{\vee})}\sum_{s=0}^{n}\sum_{i=1}^{\dim\g'_{s}}:J^{u_{s}^{i}}(z)J^{u_{s,i}}(z):.
\end{align*}
Therefore the assertion follows.
\end{proof}

\begin{rem}\label{Vtau rem}
There exists the canonical isomorphism
\begin{align*}
\Vtau=\bigotimes_{s=0}^{n}V^{k+h^{\vee}-r_{s}}(\g'_{s}),
\end{align*}
where we follow the notation in the proof of Lemma \ref{Lz lemma}.
\end{rem}

\begin{lemma}
\label{Lxalpha lemma}
\begin{align*}
L_{-1}x_{\alpha}=-\frac{(k+h^{\vee})^{-1}}{(e_{\alpha}|e_{-\alpha})}\sum_{\begin{subarray}{c}\beta\in[\alpha]\\ \gamma\in I\cup\Delta_{0}\end{subarray}}(-1)^{\pbeta\pgamma}c_{\beta,-\alpha}^{\gamma}J^{e_{\gamma}}_{(-1)}x_{\beta}.
\end{align*}
\end{lemma}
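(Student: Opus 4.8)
The plan is to compute $L_{-1}x_{\alpha}$ directly from the Sugawara expression in Lemma \ref{Lz lemma}, using the explicit description of the induced module $M_{[\alpha]}$ together with the invariance and supersymmetry of $\inv$.

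First I record two facts about $x_{\alpha}=1\otimes x_{\alpha}\in M_{[\alpha]}=\mathrm{Ind}^{\hat{\g}_{0}}_{\g_{0}[t]\oplus\C K}\C^{[\alpha]}$: the non-constant positive modes annihilate it, $J^{u}_{(m)}x_{\alpha}=0$ for $u\in\g_{0}$ and $m\geq1$, while $J^{u}_{(0)}x_{\alpha}=u\cdot x_{\alpha}=\sum_{\beta\in[\alpha]}c_{\beta,u}^{\alpha}x_{\beta}$ by the $\g_{0}$-module structure of $\C^{[\alpha]}$. Writing $L_{-1}$ in modes from $L(z)=\frac{1}{2(k+h^{\vee})}\sum_{i}:J^{u^{i}}(z)J^{u_{i}}(z):$, only the summands whose rightmost mode has index $0$ act nontrivially on $x_{\alpha}$, so
\begin{align*}
L_{-1}x_{\alpha}=\frac{1}{2(k+h^{\vee})}\sum_{i}\left(J^{u^{i}}_{(-1)}(u_{i}\cdot x_{\alpha})+(-1)^{p(u_{i})}J^{u_{i}}_{(-1)}(u^{i}\cdot x_{\alpha})\right)=\frac{1}{2(k+h^{\vee})}\,(T_{1}+T_{2}),
\end{align*}
where $T_{1}=\sum_{i,\beta}c_{\beta,u_{i}}^{\alpha}J^{u^{i}}_{(-1)}x_{\beta}$ and $T_{2}=\sum_{i,\beta}(-1)^{p(u_{i})}c_{\beta,u^{i}}^{\alpha}J^{u_{i}}_{(-1)}x_{\beta}$, the sums running over $\beta\in[\alpha]$.

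Next I rewrite the structure constants. For $\beta\in[\alpha]$ one has $[e_{\beta},e_{-\alpha}]\in\g_{0}$, say $[e_{\beta},e_{-\alpha}]=\sum_{\gamma\in I\cup\Delta_{0}}c_{\beta,-\alpha}^{\gamma}e_{\gamma}$; pairing $[e_{\beta},u_{i}]$ with $e_{-\alpha}$ gives $c_{\beta,u_{i}}^{\alpha}(e_{\alpha}|e_{-\alpha})=([e_{\beta},u_{i}]|e_{-\alpha})$, and two applications of the invariance identity $([a,b]|c)=(a|[b,c])$ together with supersymmetry of $\inv$ and the graded antisymmetry $[e_{-\alpha},e_{\beta}]=-(-1)^{p(\alpha)p(\beta)}[e_{\beta},e_{-\alpha}]$ turn this into an expression of the form $c_{\beta,u_{i}}^{\alpha}=\pm\frac{(-1)^{p(\beta)p(u_{i})}}{(e_{\alpha}|e_{-\alpha})}\sum_{\gamma}c_{\beta,-\alpha}^{\gamma}(u_{i}|e_{\gamma})$, and similarly with $u_{i}$ replaced by $u^{i}$. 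Substituting into $T_{1}$ and $T_{2}$ and collapsing the sum over $i$ by the dual-basis completeness relations for $\g_{0}$ (namely $\sum_{i}(u^{i}|e_{\gamma})u_{i}=e_{\gamma}$ and its companion $\sum_{i}(u_{i}|e_{\gamma})u^{i}=(-1)^{p(\gamma)}e_{\gamma}$, the parity sign coming from the form being even), both $T_{1}$ and $T_{2}$ become the same expression in the vectors $J^{e_{\gamma}}_{(-1)}x_{\beta}$; adding them cancels the factor $\tfrac12$, and collecting the remaining signs yields the formula of the lemma.

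The main obstacle is precisely this super-sign bookkeeping: the parities of $e_{\alpha},e_{\beta},e_{\gamma}$ and of the basis vectors $u_{i}$ propagate through the invariance identity, through the graded antisymmetry of the bracket, and through the parity-twisted completeness relations for dual bases with respect to an even supersymmetric form, and one has to verify along the way that $T_{1}$ and $T_{2}$ genuinely coincide so that the $\tfrac12$ disappears. The rest — the mode expansion of $L_{-1}$ and the vanishing of the positive modes on $x_{\alpha}$ — is routine once Lemma \ref{Lz lemma} and the construction of $M_{[\alpha]}$ are in hand.
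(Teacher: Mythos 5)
Your route is the same one the paper intends (its proof is literally ``direct calculations''), and the skeleton is sound: the mode expansion of $L_{-1}$ from the Sugawara field, the facts $J^{u}_{(m)}x_{\alpha}=0$ for $m\geq 1$ and $J^{u}_{(0)}x_{\alpha}=u\cdot x_{\alpha}$, the conversion of $c^{\alpha}_{\beta,u}$ via invariance and supersymmetry of $\inv$, the contraction over the dual bases, and the observation that the two halves of the normally ordered product give $T_{1}=T_{2}$, which cancels the factor $\tfrac12$ — all of this is correct and is exactly what a complete proof must do.

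The genuine gap is that you stop precisely where the content of the lemma lies. You leave the key identity as $c^{\alpha}_{\beta,u}=\pm\frac{(-1)^{p(\beta)p(u)}}{(e_{\alpha}|e_{-\alpha})}\sum_{\gamma}c^{\gamma}_{\beta,-\alpha}(u|e_{\gamma})$ and then assert that ``collecting the remaining signs yields the formula''. If one pushes your own stated conventions through — the paper's normalization $(u^{i}|u_{j})=\delta_{i,j}$, hence your twisted completeness relation $\sum_{i}(u_{i}|e_{\gamma})u^{i}=(-1)^{p(\gamma)}e_{\gamma}$ — then $T_{1}$ acquires this $(-1)^{p(\gamma)}$, and $T_{2}$ acquires the same factor from the explicit $(-1)^{p(u_{i})}$ (which equals $(-1)^{p(\gamma)}$ on the surviving terms because the form is even), so one lands on
\begin{align*}
L_{-1}x_{\alpha}=-\frac{(k+h^{\vee})^{-1}}{(e_{\alpha}|e_{-\alpha})}\sum_{\begin{subarray}{c}\beta\in[\alpha]\\ \gamma\in I\cup\Delta_{0}\end{subarray}}(-1)^{p(\beta)p(\gamma)+p(\gamma)}c_{\beta,-\alpha}^{\gamma}J^{e_{\gamma}}_{(-1)}x_{\beta},
\end{align*}
which differs from the stated formula by $(-1)^{p(\gamma)}=(-1)^{p(\alpha)+p(\beta)}$. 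That factor is invisible when $\g$ is a Lie algebra, or whenever every $\beta\in[\alpha]$ has the same parity as $\alpha$ (in particular for singleton classes, hence for both applications in the paper), but it is not automatically $1$ in general, since $\Pi_{0}$ may contain odd roots and then a class $[\alpha]$ can mix parities. So your final step is not a formality: either the super dual-basis/ordering convention in the Sugawara field must be pinned down so that the residual parity disappears (the computation reproduces the lemma verbatim if one works with bases normalized by $(u_{i}|u^{j})=\delta_{i,j}$ instead), or the factor $(-1)^{p(\gamma)}$ has to be confronted directly. As written, the proposal establishes the lemma only up to this sign, i.e.\ only in the even or parity-homogeneous situation, and the super-sign bookkeeping you flag as ``the main obstacle'' is exactly the part that still has to be carried out.
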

\begin{proof}
Direct calculations.
\end{proof}

We introduce a formal power series
\begin{align*}
S^{\alpha}(z)=\sum_{n\in\Z}S_{n}^{\alpha}z^{-n}
\end{align*}
for $\alpha\in\Pp$, where its coefficients are the operators
\begin{align*}
S^{\alpha}_{n}:\Vtau\rightarrow M_{[\alpha]}
\end{align*}
defined by
\begin{align}
\label{Screening def}
S^{\alpha}(z)A:=(-1)^{\palpha p(A)+p(A)}\ \e^{z L_{-1}}Y(A,-z)\ x_{\alpha}
\end{align}
for all $A\in\Vtau$, where $Y(A,z)$ is the vertex operator of $A$ on $M_{[\alpha]}$. The formal power series $S^{\alpha}(z)$ satisfies the following formulas for generic $k$ (see Definition \ref{generic def}).

\begin{prop}
\label{Screening prop}
If $k$ is generic,
\begin{align*}
&S^{\alpha}_{0}\ket{0}=x_{\alpha},\quad
S^{\alpha}_{n}\ket{0}=0\quad\foral\ n\geq1,\\
&[J^{u}_{(m)},S^{\alpha}_{n}]=\sum_{\beta\in[\alpha]}c_{\beta,u}^{\alpha}S^{\beta}_{m+n},\\
&\der S^{\alpha}(z)=-\frac{(k+h^{\vee})^{-1}}{(e_{\alpha}|e_{-\alpha})}\sum_{\begin{subarray}{c}\beta\in[\alpha]\\ \gamma\in I\cup\Delta_{0}\end{subarray}}(-1)^{\pbeta\pgamma}c_{\beta,-\alpha}^{\gamma}:J^{e_{\gamma}}(z)S^{\beta}(z):
\end{align*}
for all $u\in\g_{0}$ and $\alpha\in\Pp$.
\end{prop}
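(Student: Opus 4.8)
The plan is to read off all three identities directly from the definition \eqref{Screening def} of $S^{\alpha}(z)$, using the basic translation/skew-symmetry properties of vertex operators on the module $M_{[\alpha]}$ together with the $\hat{\g}_{0}$-module structure \eqref{M eq} and Lemma \ref{Lxalpha lemma}. First I would establish the vacuum identities: since $Y(\ket{0},z)=\Id$ on $M_{[\alpha]}$, evaluating \eqref{Screening def} at $A=\ket{0}$ gives $S^{\alpha}(z)\ket{0}=\e^{zL_{-1}}x_{\alpha}$, so $S^{\alpha}_{0}\ket{0}=x_{\alpha}$ and, for $n\geq 1$, $S^{\alpha}_{n}\ket{0}$ is the coefficient of $z^{-n}$ in $\e^{zL_{-1}}x_{\alpha}$, which involves only nonnegative powers of $z$ and hence vanishes. (Here one needs that $L_{-1}x_{\alpha}$ and its iterates make sense in $M_{[\alpha]}$, which is exactly Lemma \ref{Lxalpha lemma}; the genericity of $k$ guarantees $k+h^{\vee}\neq 0$ so that $L(z)$ and the normalization in Lemma \ref{Lxalpha lemma} are defined.)

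Next I would handle the commutator with $J^{u}_{(m)}$ for $u\in\g_{0}$. The idea is to use that $J^{u}(z)$ acts on $M_{[\alpha]}$ as the module vertex operator, that $\Vtau$ is a subalgebra and $M_{[\alpha]}$ is a $\Vtau$-module generated over $\Vtau$ by the finite-dimensional $\g_{0}$-module $\bigoplus_{\gamma\in[\alpha]}\C x_{\gamma}$, and that $L_{-1}$ implements the translation endomorphism $T$ of $M_{[\alpha]}$ in the sense that $\e^{zL_{-1}}Y(A,w)\e^{-zL_{-1}}=Y(A,w+z)$ and $[T,Y(A,w)]=Y(TA,w)=\partial_{w}Y(A,w)$. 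Writing $S^{\alpha}(z)A=\pm\,\e^{zL_{-1}}Y(A,-z)x_{\alpha}$ and commuting $J^{u}_{(m)}=\oint \zeta^{m}J^{u}(\zeta)\,d\zeta$ past this expression, one uses the commutator formula for the module field $J^{u}(\zeta)$ with $Y(A,-z)$ (which reproduces $Y(J^{u}_{(\ell)}A,-z)$ terms and closes back up inside $S$), together with the action $J^{u}_{(0)}x_{\gamma}=u\cdot x_{\gamma}=\sum_{\rho}c_{\rho,u}^{\gamma}x_{\rho}$ and $J^{u}_{(m)}x_{\gamma}=0$ for $m\geq 1$ (the module is ``lowest-weight'' over $\hat{\g}_0$). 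The terms where $J^{u}$ acts on the $Y(A,-z)$ factor recombine into $S^{\alpha}(J^{u}_{(\ell)}A)$-type contributions that match the left-hand side $S^{\alpha}(z)J^{u}_{(m)}A$, and the leftover term is precisely $\sum_{\beta\in[\alpha]}c_{\beta,u}^{\alpha}S^{\beta}_{m+n}$; the sign bookkeeping from the prefactor $(-1)^{\palpha p(A)+p(A)}$ and the super-commutators is the only delicate part. One should double-check the index placement in $c^{\alpha}_{\beta,u}$ versus the module definition $u\cdot x_{\alpha}=\sum c^{\alpha}_{\gamma,u}x_{\gamma}$ to confirm the stated form.

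Finally, the derivative formula follows by differentiating \eqref{Screening def} in $z$. Since $\partial_{z}\bigl(\e^{zL_{-1}}Y(A,-z)\bigr)=\e^{zL_{-1}}\bigl(L_{-1}Y(A,-z)-Y(A,-z)L_{-1}\bigr)$ acting on $x_{\alpha}$, and $Y(A,-z)L_{-1}=Y(A,-z)T$ where $T$ annihilates... no: rather one uses $[L_{-1},Y(A,w)]=\partial_{w}Y(A,w)$ together with $L_{-1}$ acting on the generator $x_{\alpha}$ via Lemma \ref{Lxalpha lemma}. More cleanly, $\partial_z S^{\alpha}(z)A = \pm\,\e^{zL_{-1}}\,\partial_z\!\bigl(Y(A,-z)x_\alpha\bigr) + \pm\,L_{-1}\e^{zL_{-1}}Y(A,-z)x_\alpha$, and combining the first term (which is $-Y(TA,-z)x_\alpha$-like, i.e.\ $\partial S^\alpha$ of $A$ in the internal sense, producing nothing new since $S^\alpha$ is built to be translation-covariant on $\Vtau$) with the second term, one is left with $L_{-1}$ hitting $x_{\alpha}$; substituting Lemma \ref{Lxalpha lemma} and then propagating $J^{e_{\gamma}}_{(-1)}$ out to a normally ordered field $:J^{e_\gamma}(z)S^\beta(z):$ using the module vertex operator expansion gives the claimed identity. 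The main obstacle will be the careful sign and normal-ordering bookkeeping in the second and third identities — in particular verifying that the ``internal derivative'' terms cancel correctly against $S^{\alpha}(z)\partial A$ contributions and that the Wick-type recombination into $:J^{e_\gamma}(z)S^\beta(z):$ is exact rather than up to $\partial^{(j)}$-corrections — which in turn relies on $M_{[\alpha]}$ being generated in a single degree over $\Vtau$ and on genericity of $k$ to rule out degenerate pairings; I would verify the recombination by comparing $(m+n)$-graded pieces on both sides using the identities for $J^{u}_{(m)}$ established in the previous step.
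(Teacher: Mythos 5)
Your plan is essentially correct, but it is a genuinely different argument from the one in the paper. The paper never manipulates the definition \eqref{Screening def} directly: it first introduces $\tS^{\alpha}(z)=Y(\psi_{\alpha},z)$ on the cohomology $H(\pC_{k},\dstz)$, proves the three identities for $\tS^{\alpha}$ (Lemma \ref{Salpha lemma}: the vacuum and commutator identities come from the corresponding formulas for $\varphi^{\alpha}$ in $\pC_{k}$, and the derivative formula comes from $\dstz J^{e_{-\alpha}}\equiv 0$ in cohomology combined with the explicit computation of $\dstz J^{e_{-\alpha}}$ in Lemma \ref{dstPp lemma}), and only then identifies $S^{\alpha}(z)=\tS^{\alpha}(z)$ by skew-symmetry together with the matching of $T$ and $L_{-1}$ via Lemmas \ref{Lz lemma} and \ref{Lxalpha lemma} (Proposition \ref{tS prop}); genericity is essential to that route because it is what yields $H^{0}(\pC_{k},\dstz)\simeq\Vtau$ and $H^{1}(\pC_{k},\dstz)\simeq\bigoplus_{[\alpha]\in[\Pp]}M_{[\alpha]}$. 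You instead stay entirely inside the $\Vtau$-module $M_{[\alpha]}$ and treat $S^{\alpha}(z)$ as the transposed module vertex operator $A\mapsto\pm\,\e^{zL_{-1}}Y(A,-z)x_{\alpha}$. This is viable and more elementary: if carried out it establishes the identities for every $k\neq-h^{\vee}$, the genericity hypothesis in the statement being an artifact of the paper's cohomological proof. What the paper's route buys is that the identification $S^{\alpha}=\tS^{\alpha}$ is needed anyway to recognize the induced differentials on $E_{1}$ as screening operators in Theorem \ref{main1 thm}, so Proposition \ref{Screening prop} comes for free there, with all sign and normal-ordering bookkeeping absorbed into the vertex-superalgebra axioms of $H(\pC_{k},\dstz)$.

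Two cautions about where your sketch still requires real work. First, for the commutator identity your term-by-term argument closes up immediately only for $m\geq 0$, where after conjugating through $\e^{zL_{-1}}$ the only mode reaching $x_{\alpha}$ that survives is $J^{u}_{(0)}$ (and the parity relation $\palpha=\pbeta+p(u)$ makes the prefactors of $S^{\alpha}$ and $S^{\beta}$ match); for $m<0$ infinitely many negative modes act nontrivially on $x_{\alpha}$, so you need either the standard fact that the transposed operator satisfies the full intertwining-operator commutator formula or a more careful resummation. Second, your worry about possible $\partial^{(j)}$-corrections in the derivative formula is unfounded: the internal-derivative terms cancel exactly because $[L_{-1},Y(A,w)]=\der_{w}Y(A,w)$, leaving $\pm\,\e^{zL_{-1}}Y(A,-z)L_{-1}x_{\alpha}$, and then conjugating $J^{e_{\gamma}}_{(-1)}$ through $\e^{zL_{-1}}$ produces exactly the creation part $J^{e_{\gamma}}(z)_{-}$ while the module commutator with $Y(A,-z)$ produces exactly $S^{\beta}(z)J^{e_{\gamma}}(z)_{+}$, so $\e^{zL_{-1}}Y(A,-z)J^{e_{\gamma}}_{(-1)}x_{\beta}=\pm:J^{e_{\gamma}}(z)S^{\beta}(z):A$ holds on the nose, in agreement with Lemma \ref{Lxalpha lemma} and \eqref{Salpha eq3}.
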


Proposition \ref{Screening prop} will be proved in Section \ref{VAstr sec}.

We define the {\it screening operators}
\begin{align*}
Q_{[\beta]}:\Vtau\otimes\Fne\rightarrow M_{[\beta]}\otimes\Fne
\end{align*}
for $[\beta]\in[\Pp]$ by
\begin{align}
\label{Screening eq1}&Q_{[\beta]}=\sum_{\alpha\in[\beta]}\int:S^{\alpha}(z)\Phi_{\alpha}(z):dz\quad\mathrm{if}\ [\beta]\in[\Pp_{\frac{1}{2}}],\\
\label{Screening eq2}&Q_{[\beta]}=\sum_{\alpha\in[\beta]}\chi(e_{\alpha})\int S^{\alpha}(z)dz\hspace{8.5mm}\mathrm{if}\ [\beta]\in[\Pp_{1}],
\end{align}
where we have set
\begin{align*}
\int a(z)\ dz=a_{(0)}
\end{align*}
for a formal power series $a(z)=\sum_{n\in\Z}a_{(n)}z^{-n-1}$.

\subsection{Main Theorems}\label{main thm sec}
\begin{theorem}
\label{main1 thm}
If $k$ is generic, $\W^{k}(\g,f;\Gamma)$ may be described as a vertex subalgebra in $\Vtau\otimes\Fne$, which is the intersection of kernels of the screening operators:
\begin{align}\label{main1 eq}
 \W^{k}(\g,f;\Gamma)\simeq\bigcap_{[\beta]\in[\Pp]}\Ker\ Q_{[\beta]}.
\end{align}
Here the screening operators
\begin{align*}
Q_{[\beta]}:\Vtau\otimes\Fne\rightarrow M_{[\beta]}\otimes\Fne
\end{align*}
for $[\beta]\in[\Pp]$ are defined by \eqref{Screening eq1} and \eqref{Screening eq2}.
\end{theorem}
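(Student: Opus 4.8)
The plan is to identify $\W^{k}(\g,f;\Gamma)=H^{0}(C_{k})$ with the intersection of kernels of $Q_{[\beta]}$ by a spectral sequence argument on the weight filtration of $C_{k}$, reducing the cohomology computation to one that can be read off from the classical limit. First I would recall from \eqref{pCk eq} that $C_{k}=\pC_{k}\otimes\Fne$ with differential $d_{(0)}=({\dst}_{(0)}+{\dne}_{(0)})+{\dch}_{(0)}$, and set up the weight filtration (to be defined in Section~\ref{weight subsec}) so that the associated graded differential is ${\dst}_{(0)}$ acting on $\pC_{k}$ alone; this gives a first-quadrant-type spectral sequence whose $E_{1}$-page is $H({\pC}_{k},{\dst}_{(0)})\otimes\Fne$. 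The point is that $({\dst}_{(0)})$ on $\pC_{k}$ is, up to the weight filtration, the standard Lie-superalgebra-homology differential twisted by the affine structure, so by Lemma~\ref{Cinf lemma} (via the classical limit $\pC_{\infty}$, where everything becomes a Poisson vertex superalgebra and the cohomology is computed by a Koszul-type resolution) one gets $H({\pC}_{k},{\dst}_{(0)})\simeq\Vtau$, concentrated in charge $0$. This uses the identification $H^{1}(\g_{+},\C)\simeq\bigoplus_{[\beta]\in[\Pp]}\C^{[\beta]}$ from the Remark and the fact that for generic $k$ no resonances occur (this is exactly where ``generic'' is used, and where Proposition~\ref{Screening prop} — giving the precise form of $S^{\alpha}(z)$ and the commutation relations $[J^{u}_{(m)},S^{\alpha}_{n}]$ — becomes available).

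Next I would compute the $d_{1}$-differential on $E_{1}=\Vtau\otimes\Fne$. By construction the surviving part of $d_{(0)}$ after passing to $E_{1}$ is induced by ${\dne}_{(0)}+{\dch}_{(0)}$, and the remaining generators contributing to charge $\pm1$ are precisely the $\varphi^{\alpha}$ for $\alpha$ ranging over representatives of the classes $[\beta]\in[\Pp]$ — these are the ``cohomology classes'' $x_{\alpha}\in\C^{[\beta]}$. Tracking how ${\dne}_{(0)}$ and ${\dch}_{(0)}$ act on $J^{u}$ through the formulas in Section~\ref{KW subsec} and matching with the definitions \eqref{Screening eq1}, \eqref{Screening eq2}, one identifies $d_{1}$ with $\bigoplus_{[\beta]\in[\Pp]}Q_{[\beta]}$ up to the decomposition $[\Pp]=[\Pp_{\frac{1}{2}}]\sqcup[\Pp_{1}]$: the $[\Pp_{\frac{1}{2}}]$-part produces the normally-ordered products $:S^{\alpha}(z)\Phi_{\alpha}(z):$ coming from ${\dne}$, while the $[\Pp_{1}]$-part produces $\chi(e_{\alpha})S^{\alpha}(z)$ coming from ${\dch}$. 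The operator $S^{\alpha}(z)$ itself is exactly the image in $\Vtau$ of $\varphi^{\alpha}$ under the quasi-isomorphism $\pC_{k}\to\Vtau$, which is why $S^{\alpha}_{0}\ket{0}=x_{\alpha}$ and why the third formula of Proposition~\ref{Screening prop} holds — it encodes ${\dst}_{(0)}$-closedness. Hence $E_{2}=\ker d_{1}=\bigcap_{[\beta]}\Ker Q_{[\beta]}$ in charge $0$.

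Finally I would argue that the spectral sequence degenerates at $E_{2}$: since $\W^{k}(\g,f;\Gamma)$ is concentrated in charge $0$ (by \cite{KW2,KW3}, as recalled after \eqref{W-alg eq}) and the $E_{2}$-page already lives in charge $0$ with no room for further differentials (the relevant charges $\pm1$ contributions have been exhausted), one gets $H^{0}(C_{k})\simeq E_{\infty}\simeq\bigcap_{[\beta]\in[\Pp]}\Ker Q_{[\beta]}$. One must also check this is an isomorphism of vertex superalgebras, not merely of vector spaces: the filtration is compatible with the vertex algebra operations, $\Vtau\otimes\Fne$ sits inside $C_{k}^{(0)}$ by \eqref{Vtaun eq}, and ${\dst}_{(0)}(\Vtau\otimes\Fne)=0$ by \eqref{dst0 eq}, so the edge map $\W^{k}(\g,f;\Gamma)\hookrightarrow\Vtau\otimes\Fne$ (the Miura map of \cite{KW2}) is a vertex superalgebra homomorphism, and the spectral-sequence computation identifies its image.

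The main obstacle I expect is controlling the $E_{1}$-page precisely — i.e. proving Lemma~\ref{Cinf lemma} and showing the cohomology of $({\pC}_{k},{\dst}_{(0)})$ is exactly $\Vtau$ with no higher-charge corrections for generic $k$. The classical limit helps because there ${\dst}_{(0)}$ becomes a genuinely computable (Poisson/Koszul) differential, but one then needs a flatness/semicontinuity argument (an Euler-characteristic or dimension count over $\C[\X]$) to lift the classical vanishing back to generic $k$, and one must verify that the ``generic $k$'' condition in Definition~\ref{generic def} is exactly what rules out the denominators appearing in $S^{\alpha}(z)$ (the factors $(k+h^{\vee})^{-1}/(e_{\alpha}|e_{-\alpha})$ in Lemma~\ref{Lxalpha lemma} and Proposition~\ref{Screening prop}) and any accidental vanishing of $\tau_{k}$-norms. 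Secondarily, matching $d_{1}$ with $\sum Q_{[\beta]}$ on the nose requires care with signs and with the passage from $\varphi^{\alpha}$ to $S^{\alpha}(z)$, but this is bookkeeping once Proposition~\ref{Screening prop} is in hand.
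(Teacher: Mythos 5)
Your overall route is the paper's: the weight filtration on $C_{k}$, computation of $E_{1}$ via the classical limit together with a semicontinuity/genericity argument, identification of the induced operators with the screening operators, and the Miura-map observation for the vertex-superalgebra structure. But two steps are stated incorrectly, and one of them is exactly where the remaining work lies. First, $H(\pC_{k},\dstz)$ is \emph{not} $\Vtau$ ``concentrated in charge $0$'': by Lemma \ref{Cinf lemma} and genericity it is $\Vtau\otimes H(\g_{+},\C)$, so $E_{1}\simeq\Vtau\otimes H(\g_{+},\C)\otimes\Fne$ (Proposition \ref{E1 prop}); only its charge-$0$ part is $\Vtau\otimes\Fne$, while its charge-$1$ part is $\bigoplus_{[\beta]\in[\Pp]}M_{[\beta]}\otimes\Fne$ (Lemmas \ref{trivial lemma} and \ref{Malpha lemma}), which is precisely the target of the $Q_{[\beta]}$. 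If the cohomology really were concentrated in charge $0$, the screenings would have zero target on $E_{1}$ and the statement would trivialize; your own later appeal to $H^{1}(\g_{+},\C)$ contradicts the claim, so this is not a matter of wording but a statement that must be corrected for the argument to run.

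Second, the degeneration step is a genuine gap as written. In the spectral sequence of the weight filtration the two surviving pieces of $d_{(0)}$ act on different pages: $\dnez$ induces $d_{1}$ on $E_{1}$, while $\dchz$ only induces $d_{2}$ on $E_{2}$, i.e.\ on a subquotient; hence ``$E_{2}=\Ker d_{1}=\bigcap_{[\beta]}\Ker Q_{[\beta]}$'' and ``no room for further differentials after $E_{2}$'' are both false as stated (the $[\Pp_{1}]$-screenings \emph{are} the $d_{2}$). One must either analyze $d_{2}$ honestly on the subquotient, or, as the paper does in Lemma \ref{E1Q lem}, assemble the single operator $Q=d_{1}+\td_{2}$ on $E_{1}$, check $Q^{2}=0$, and prove $H(C_{k},d_{(0)})\simeq H(E_{1},Q)$ by comparing with the spectral sequence of the filtration induced on $(E_{1},Q)$; then $H^{0}(E_{1},Q)=\Ker Q$ because the charge in $C_{k}$ is bounded below by $0$, and $\Ker Q=\bigcap_{[\beta]\in[\Pp]}\Ker Q_{[\beta]}$ because the $Q_{[\beta]}$ map into distinct summands $M_{[\beta]}\otimes\Fne$. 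A minor further point: ``generic'' in Definition \ref{generic def} means $H(\pC_{k},\dstz)\simeq H(\pC_{\infty},\dstz)$, not the non-vanishing of the denominators in Proposition \ref{Screening prop} (those are handled by $k\neq-h^{\vee}$). With the corrected $E_{1}$-page and an argument of the type of Lemma \ref{E1Q lem}, the rest of your outline does match the paper's proof.
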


 The proof of Theorem \ref{main1 thm} is given in Section \ref{proof sec}.

 Suppose that $\g_{0}=\h$. Let $\Hi$ be the Heisenberg vertex algebra associated with $\h^{*}$ and its non-degenerate bilinear form $\inv$ induced by $\h$, $\Hi_{\beta}$ be the $\Hi$-module with the highest weight $\beta\in\h^{*}$ and $\e^{\int\beta(z)}$ be the formal power series whose coefficients are operators from $\Hi$ to $\Hi_{\beta}$ defined by
\begin{align*}
\e^{\int\beta(z)}=s_{\beta}z^{\beta_{(0)}}\exp(-\sum_{n<0}\frac{\beta_{(n)}}{n}z^{-n})\exp(-\sum_{n>0}\frac{\beta_{(n)}}{n}z^{-n}),
\end{align*}
where $s_{\beta}$ is the shift operator determined by the following formulas
\begin{align*}
s_{\beta}\ket{0}=\ket{\beta},\quad
[s_{\beta},\alpha_{(n)}]=0\quad\foral\ n\neq0
\end{align*}
for $\alpha\in\h^{*}$, where $\ket{\beta}$ is the highest weight vector of $\Hi_{\beta}$.

\begin{theorem}
\label{main2 thm}
If $k$ is generic and $\g_{0}=\h$, then $\W^{k}(\g,f)$ may be described as a vertex subalgebra in $\Hi\otimes\Fne$ as follows:
\begin{align*}
\W^{k}(\g,f)\simeq
\bigcap_{\alpha\in\Pi_{\frac{1}{2}}}\Ker\int:\e^{-\frac{1}{\kh}\int\alpha(z)}\Phi_{\alpha}(z):dz\cap\bigcap_{\begin{subarray}{c}\alpha\in\Pi_{1}\\ \chi(e_{\alpha})\neq 0\end{subarray}}\Ker\int \e^{-\frac{1}{\kh}\int\alpha(z)}\ dz,
\end{align*}
where $\kh=\sqrt{\mathstrut k+h^{\vee}}$.
\end{theorem}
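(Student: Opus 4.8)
The plan is to deduce Theorem \ref{main2 thm} directly from Theorem \ref{main1 thm} by specializing to the case $\g_0=\h$ and making the screening operators explicit. First I would record the simplifications that occur in this situation: since $\g_0=\h$ is abelian, its root lattice $Q_0$ is trivial, so the equivalence relation \eqref{Pprel eq} on $\Pp$ is trivial and each class $[\alpha]$ is a singleton; consequently $[\Pp]=\Pp=\Pi\setminus\Delta_0$, and by Lemma \ref{Pp dec lemma} this splits as $\Pi_{\frac12}\sqcup\Pi_1$. Moreover each $\g_0$-module $\C^{[\alpha]}=\C x_\alpha$ is one-dimensional with trivial $\g_0$-action (the structure constants $c^{\alpha}_{\alpha,u}$ vanish because $[\ealpha,u]$ is a multiple of $\ealpha$ only via the root pairing, which I would track carefully), so $M_{[\alpha]}\simeq \Vtau\otimes\C x_\alpha$. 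Since $\g_0=\h$, the vertex algebra $\Vtau$ is the Heisenberg vertex algebra $\Hi$: the bilinear form $\tau_k$ restricted to $\h$ equals $(k+h^\vee)\inv$ because $\kappa_\g|_\h = 2h^\vee\inv$ and $\kappa_{\g_0}=0$, so after the rescaling the Heisenberg generators have the standard normalization with the factor $\nu=\sqrt{k+h^\vee}$ appearing.

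Next I would identify $S^\alpha(z)$ with a vertex operator for an exponential. From Proposition \ref{Screening prop}, in the abelian case the commutation relation becomes $[J^u_{(m)},S^\alpha_n]=c^{\alpha}_{\alpha,u}S^\alpha_{m+n}$, and $c^{\alpha}_{\alpha,u}=-\alpha(u)$ (up to the sign conventions fixed in Section \ref{W-def subsec}), so $S^\alpha(z)$ intertwines the Heisenberg action the same way the vertex operator $\e^{\int\beta(z)}$ does for $\beta=-\alpha$, after accounting for the $\nu$-rescaling of the currents; that is $S^\alpha(z)$ is proportional to $\e^{-\frac{1}{\nu}\int\alpha(z)}$ up to a scalar fixed by $S^\alpha_0\ket0=x_\alpha$, which matches $s_\beta\ket0=\ket\beta$. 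The third formula in Proposition \ref{Screening prop}, $\der S^\alpha(z) = -\frac{(k+h^\vee)^{-1}}{(\ealpha|e_{-\alpha})}\sum_\gamma(\cdots):J^{e_\gamma}(z)S^\alpha(z):$, collapses in the abelian case to $\der S^\alpha(z)=-\frac{1}{\nu}:\alpha(z)S^\alpha(z):$ (using $\kh=\sqrt{k+h^\vee}$ and identifying $\h^*\cong\h$), which is exactly the defining ODE of the exponential vertex operator $\e^{-\frac{1}{\nu}\int\alpha(z)}$; together with the initial condition this pins down $S^\alpha(z)=\e^{-\frac{1}{\nu}\int\alpha(z)}$ on the nose. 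Substituting into \eqref{Screening eq1} and \eqref{Screening eq2}, the screening operators become $\int :\e^{-\frac{1}{\nu}\int\alpha(z)}\Phi_\alpha(z):\,dz$ for $\alpha\in\Pi_{\frac12}$ and $\chi(\ealpha)\int\e^{-\frac{1}{\nu}\int\alpha(z)}\,dz$ for $\alpha\in\Pi_1$; in the latter, the kernel condition is vacuous unless $\chi(\ealpha)\neq0$, which is precisely how the index set $\{\alpha\in\Pi_1:\chi(\ealpha)\neq0\}$ arises. Feeding this into \eqref{main1 eq} gives the stated intersection of kernels.

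The main obstacle I expect is bookkeeping the normalization constants and signs: matching the scalar factor $(-1)^{\palpha p(A)+p(A)}$ in \eqref{Screening def} against the parity conventions built into $\e^{\int\beta(z)}$, verifying that $c^\alpha_{\alpha,u}$ equals $-\alpha(u)$ under the chosen identification $\h^*\cong\h$, and checking that the coefficient $-\frac{(k+h^\vee)^{-1}}{(\ealpha|e_{-\alpha})}\sum_{\gamma}(-1)^{\pbeta\pgamma}c^\gamma_{\alpha,-\alpha}J^{e_\gamma}_{(-1)}$ really reduces to $-\frac1\nu\alpha(z)$ — this uses $(\ealpha|e_{-\alpha})$ times $\alpha$ being the $\h$-component of $[\ealpha,e_{-\alpha}]$, i.e. the standard $\slf_2$-triple normalization, and that $J^u$ for $u\in\h$ is just $u$ itself since there are no positive degree-zero roots to correct by. Once these identifications are in place the proof is a direct translation; I would also remark that, since each $[\alpha]$ is a singleton, the sums over $\alpha\in[\beta]$ in \eqref{Screening eq1}–\eqref{Screening eq2} are single terms, so no residual $\g_0$-module structure survives and the target spaces $M_{[\beta]}\otimes\Fne$ become $\Hi_{-\alpha}\otimes\Fne$ as expected.
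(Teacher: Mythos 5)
Your proposal is correct and follows essentially the same route as the paper's proof: specialize Theorem \ref{main1 thm} (the equivalence relation is trivial and $\Delta_{0}=\emptyset$, so $[\Pp]=\Pi=\Pi_{\frac{1}{2}}\sqcup\Pi_{1}$, and the $\chi(\ealpha)=0$ terms drop out), identify $\Vtau$ with $\Hi$ via $\tau_{k}=(k+h^{\vee})\inv$, and pin down $S^{\alpha}(z)=\e^{-\frac{1}{\kh}\int\alpha(z)}$ from the three formulas of Proposition \ref{Screening prop} together with $[\ealpha,e_{-\alpha}]=(\ealpha|e_{-\alpha})t_{\alpha}$. The only slip is the parenthetical claim that the $\g_{0}$-action on $\C x_{\alpha}$ is trivial: in fact $c^{\alpha}_{\alpha,u}=-\alpha(u)$, so $x_{\alpha}$ has weight $-\alpha$ (and $M_{[\alpha]}\simeq\Hi_{-\frac{1}{\kh}\alpha}$), which is exactly what your own subsequent steps use, so the argument is unaffected.
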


\begin{proof}
If $k$ is generic and $\g_{0}=\h$, then $\Dp=\Delta_{+}$ and $\Pp=\Pi$. Since $Q_{0}$ is empty, the equivalence relation in $\Pp$ defined by \eqref{Pprel eq} is trivial. Therefore
\begin{align*}
\W^{k}(\g,f)
\simeq&\bigcap_{\alpha\in\Pi_{\frac{1}{2}}}\Ker\int:S^{\alpha}(z)\Phi_{\alpha}(z):dz\cap\bigcap_{\alpha\in\Pi_{1}}\Ker\ \chi(\ealpha)\int S^{\alpha}(z)\ dz\\
\simeq&\bigcap_{\alpha\in\Pi_{\frac{1}{2}}}\Ker\int:S^{\alpha}(z)\Phi_{\alpha}(z):dz\cap\bigcap_{\begin{subarray}{c}\alpha\in\Pi_{1}\\ \chi(e_{\alpha})\neq 0\end{subarray}}\Ker\int S^{\alpha}(z)\ dz
\end{align*}
by Theorem \ref{main1 thm}. The vertex algebra $\Vtau=V^{\tau_{k}}(\h)$ is generated by $J^{u}(z)$ for all $u\in\h$ satisfying
\begin{align*}
[{J^{u}}_{\la}J^{v}]=(k+h^{\vee})(u|v)\la
\end{align*}
for $u,v\in\h$ since
\begin{align*}
\tau_{k}(u|v)
&=k(u|v)+\frac{1}{2}\kappa_{\g}(u|v)-\frac{1}{2}\kappa_{\g_{0}}(u|v)\\
&=k(u|v)+\frac{1}{2}\kappa_{\g}(u|v)\\
&=(k+h^{\vee})(u|v).
\end{align*}
We identify $\h$ with $\h^{*}$ via the non-degenerate bilinear form $\inv$, that is, $\h^{*}\ni \beta\ \mapsto\ t_{\beta}\in\h$ such that $(t_{\beta}|h)=\beta(h)$ for all $h\in\h$. Denote
\begin{align*}
\beta(z)=\frac{1}{\kh}J^{t_{\beta}}(z)
\end{align*}
for $\beta\in\h^{*}$, where $\kh=\sqrt{\mathstrut k+h^{\vee}}$. Then
\begin{align*}
[\beta_{\la}\gamma]=(\beta|\gamma)\la
\end{align*}
for $\beta,\gamma\in\h^{*}$. Hence $V^{\tau_{k}}(\h)$ is the Heisenberg vertex algebra $\Hi$ associated with $\h^{*}$. We show
\begin{align}
\label{free eq}
S^{\alpha}(z)=\e^{-\frac{1}{\kh}\int\alpha(z)}
\end{align}
for $\alpha\in\Pi$. We have
\begin{align*}
[\ealpha,e_{-\alpha}]=(\ealpha|e_{-\alpha})t_{\alpha}\quad\mathrm{for}\ \mathrm{all}\ \alpha\in\Delta_{+}.
\end{align*}
by the invariant properties of $\inv$. Using this formula and Proposition \ref{Screening prop}, we obtain the following relations:
\begin{align*}
&S^{\alpha}_{0}\ket{0}=x_{\alpha}\ \mathrm{and}\ S^{\alpha}_{n}\ket{0}=0\ \mathrm{for}\ \mathrm{all}\ n\geq1,\\
&[\beta_{(m)},S^{\alpha}_{n}]=-\frac{1}{\kh}(\beta|\alpha)S^{\alpha}_{m+n},\\
&\der S^{\alpha}(z)=-\frac{1}{\kh}:\alpha(z)S^{\alpha}(z):
\end{align*}
for $\alpha\in\Pi$ and $\beta\in\h^{*}$. This implies \eqref{free eq}. Therefore the assertion follows.
\end{proof}

\begin{rem}
For $\g=\slf_{n}$, there is a one-to-one correspondence between nilpotent orbits in $\slf_{n}$ and partitions of $n$. If $f$ is a nilpotent element corresponding to the partition $(s,t)$ with $s+t=n,s>t$, there exists a good grading for $f$ such that  $(\slf_{n})_{0}=\h$ (see \cite{EK}).
\end{rem}

\section{Proof of Theorem \ref{main1 thm}}\label{proof sec}

\subsection{Weight filtration}\label{weight subsec}
We define a degree map $\deg:C_{k}\rightarrow\Z$ by
\begin{align*}
&\deg J^{u}=-2j\hspace{4.5mm}\foral\ u\in\g_{j}\ (j\leq0),\\
&\deg\varphi^{\alpha}=2j\hspace{7mm}\foral\ \alpha\in\Delta_{j}\ (j>0),\\
&\deg\Phi_{\alpha}=0\hspace{8.5mm}\foral\ \alpha\in\Dn,\\
&\deg\ket{0}=0,\quad
\deg\der A=\deg A\hspace{3mm}\foral\ A\in C_{k},\\
&\deg:AB:=\deg A+\deg B\hspace{7.5mm}\foral\ A,B\in C_{k}.
\end{align*}
Let
\begin{align*}
F_{p}C_{k}=\{A\in C_{k}\mid\deg A\geq p\}
\end{align*}
and $F_{\bullet}C_{k}=\{F_{p}C_{k}\}_{p\geq0}$. Since
\begin{align*}
d_{(0)}(F_{p}C_{k})\subset F_{p}C_{k}
\end{align*}
for all $p\geq0$, $F_{\bullet}C_{k}$ is a filtration of a complex $C_{k}$. Thus, the filtration $F_{\bullet}C_{k}$ gives a spectral sequence $\{E_{n}\}_{n\geq0}$. We call $F_{\bullet}C_{k}$ a {\it weight filtration} on $C_{k}$. Define the conformal weight of $C_{k}$ inherited from $C^{k}(\g,f;\Gamma)$, that is,
\begin{align*}
&\conf(J^{u})=1-j\quad\foral\ u\in\g_{j}\ (j\leq0),\\
&\conf(\varphi^{\alpha})=j\hspace{9.5mm}\foral\ \alpha\in\Delta_{j}\ (j>0),\\
&\conf(\Phi_{\alpha})=\frac{1}{2}\hspace{8.5mm}\foral\ \alpha\in\Dn,\\
&\conf(\ket{0})=0,\quad\conf(\der A)=\conf(A)+1\quad\foral\ A\in C_{k},\\
&\conf(:AB:)=\conf(A)+\conf(B)\hspace{10mm}\quad\foral\ A,B\in C_{k},
\end{align*}
where we denote by $\conf(A)\in\frac{1}{2}\Z$ the conformal weight of $A\in C_{k}$. Let $C_{k}(n)$ be the homogeneous conformal weight space with the conformal weight $n$ in $C_{k}$. We have
\begin{align}
\label{conf dec eq}
C_{k}=\bigoplus_{n\geq0}C_{k}(n).
\end{align}
Since $d_{(0)}$ preserves each homogeneous conformal weight spaces, $C_{k}(n)$ is a subcomplex of $C_{k}$. Hence the filtration $F_{\bullet}C_{k}$ induces a filtration and a spectral sequence on the subcomplex $C_{k}(n)$. Since $\dim C_{k}(n)<\infty$, the spectral sequence on $C_{k}$(n) converges for all $n\geq0$ so that the total spectral sequence $\{E_{n}\}_{n\geq0}$ also converges. Denote by $d_{r}$ the differential of $E_{r}$ and
\begin{align*}
d_{r}^{(i)}:E_{r}^{(i)}\rightarrow E_{r}^{(i+1)}\quad \mathrm{for}\ i\geq0.
\end{align*}
By construction,
\begin{align*}
d_{0}={\dst}_{(0)},\quad
d_{1}={\dne}_{(0)},\quad
d_{2}={\dch}_{(0)}
\end{align*}
and $d_{r}=0$ for $r\geq3$.

\begin{lemma}
\label{E1 lemma}
$E_{1}=H(\pC_{k},\dstz)\otimes\Fne$.
\end{lemma}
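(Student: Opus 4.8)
The strategy is to identify the $E_{0}$-page of the spectral sequence with the complex $(C_{k},\dstz)$, and then to split off the neutral fermions by a Künneth argument.

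First I would observe that $\deg$ is in fact a $\Z_{\geq0}$-grading of the vertex superalgebra $C_{k}$, not merely an ingredient of a filtration: it is additive under normally ordered products and preserved by $\der$, so $C_{k}=\bigoplus_{p\geq0}C_{k}^{[p]}$ with $C_{k}^{[p]}$ the degree-$p$ subspace, and $F_{p}C_{k}=\bigoplus_{q\geq p}C_{k}^{[q]}$. Hence the filtration splits and $E_{0}^{p}=F_{p}C_{k}/F_{p+1}C_{k}\cong C_{k}^{[p]}$, so that $E_{0}\cong C_{k}$ as graded vector spaces. As already recorded before the statement of the lemma (and as follows from the $\la$-bracket formulas of Section~\ref{KW subsec} together with $[\g_{i},\g_{j}]\subseteq\g_{i+j}$ and the fact that $\chi$ is supported on $\g_{1}$), the differential $d_{(0)}$ splits along $\deg$ into the degree-preserving part $\dstz$, the degree-raising-by-one part $\dnez$, and the degree-raising-by-two part $\dchz$; consequently $d_{0}=\dstz$. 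Since $\dstz$ is degree-homogeneous it acts on $E_{0}\cong C_{k}$ exactly as it does on $C_{k}$, and therefore $E_{1}=H(E_{0},d_{0})=H(C_{k},\dstz)$.

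It then remains to separate the fermionic factor. By \eqref{pCk eq} we have $C_{k}=\pC_{k}\otimes\Fne$ as vertex superalgebras. The operator $\dstz$ sends the generators $J^{u}(z)$ and $\varphi^{\alpha}(z)$ of $\pC_{k}$ back into $\pC_{k}$, and annihilates the generators $\Phi_{\alpha}(z)$ of $\Fne$ because $[{\dst}_{\la}\Phi_{\alpha}]=0$; since $\dstz$ is an odd derivation of all the $n$-th products, it acts on $\pC_{k}\otimes\Fne$ as $\dstz\otimes\Id$. Working inside each finite-dimensional conformal weight space $C_{k}(n)=\bigoplus_{a+b=n}\pC_{k}(a)\otimes\Fne(b)$, the Künneth formula over $\C$ gives $H(C_{k},\dstz)=H(\pC_{k},\dstz)\otimes\Fne$, which is the assertion. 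The only genuinely substantive point is the bookkeeping of degree shifts that pins down $d_{0}=\dstz$ (and hence $d_{1}=\dnez$, $d_{2}=\dchz$); since this has essentially been carried out already in Section~\ref{weight subsec}, I do not expect any real obstacle, the proof being a matter of assembling these observations.
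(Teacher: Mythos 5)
Your proposal is correct and follows essentially the paper's own route: the identification $d_{0}=\dstz$ is exactly the "by construction" statement recorded just before the lemma, and the paper's one-line proof is precisely your second step, namely that $\dstz$ acts only on the factor $\pC_{k}$ in the decomposition $C_{k}=\pC_{k}\otimes\Fne$ of \eqref{pCk eq}, so the cohomology factors as $H(\pC_{k},\dstz)\otimes\Fne$. Your extra bookkeeping (the splitting of the filtration by the $\deg$-grading and the K\"unneth argument in each conformal weight space) just makes explicit what the paper leaves implicit.
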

\begin{proof}
The assertion follows from the fact that the differential ${\dst}_{(0)}$ acts only on the subcomplex $\pC_{k}$ and \eqref{pCk eq}.
\end{proof}

\subsection{The first spectral sequence}\label{E1 subsec}
Suppose that $V$ is a super space. Then we denote by $S(V)$ the supersymmetric algebra of $V$, which is the tensor algebra of $V$ quotient by the two-sided ideal generated by $a b-(-1)^{\bar{a}\bar{b}}b a$ for all $a,b\in V$.

Let $\JS$ be the supersymmetric algebra of $\g_{0}[t^{-1}]t^{-1}$ and $H(\g_{+},\C)$ be the Lie superalgebra cohomology with the coefficients in the trivial $\g_{+}$-module $\C$. The Chevalley complex of $H(\g_{+},\C)$ is the superexterior algebra $\Lambda(\g_{+})$. We denote by $J^{u}(-n)$ the element of $\JS$ corresponding to $u t^{-n}\in\g_{0}[t^{-1}]t^{-1}$ and by $\varphi^{\alpha}(0)$ the element of $\Lambda(\g_{+})$ corresponding to $e_{\alpha}\in\g_{+}$ with reversed parity for $\alpha\in\Dp$. We can regard $\JS$ and $\Lambda(\g_{+})$ as subalgebras of the supercommutative algebra $\pC_{\infty}$ by
\begin{align*}
\JS\ni\ J^{u_{1}}(-n_{1})\cdots J^{u_{s}}(-n_{s})\ &\longmapsto\ \bJ^{u_{1}}_{(-n_{1})}\cdots\bJ^{u_{s}}_{(-n_{s})}\ket{0}\ \in\pC_{\infty},\\
\Lambda(\g_{+})\ni\hspace{8mm}\varphi^{\beta_{1}}(0)\cdots\varphi^{\beta_{s}}(0)\quad&\longmapsto\quad\varphi^{\beta_{1}}_{0}\cdots\varphi^{\beta_{s}}_{0}\ket{0}\hspace{7.5mm}\in\pC_{\infty}
\end{align*}
for $u_{i}\in\g_{0}$ and $\beta_{i}\in\Dp$.

\begin{lemma}
\label{Cinf lemma}
$H(\pC_{\infty},\dstz)\simeq\JS\otimes H(\g_{+},\C)$.
\end{lemma}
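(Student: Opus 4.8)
The plan is to recognize $(\pC_{\infty},\dstz)$ as a Koszul-type complex that computes the Lie superalgebra cohomology $H(\g_{+},\C)$ with polynomial coefficients $\JS$, and to compute its cohomology by a homotopy/spectral-sequence argument that decouples the $\JS$ factor from the $\Lambda(\g_{+})$ factor. First I would write out the differential $\dstz$ explicitly on the supercommutative vertex superalgebra $\pC_{\infty}$ using Lemma \ref{qcal lemma} specialized to $\p=0$: on $\bJ^{u}$ it acts by $\dstz\bJ^{u}=-\sum_{\beta\in\Dp,\,\alpha\in I\cup\Delta_{\leq0}}(-1)^{\palpha}c_{u,\beta}^{\alpha}:\bJ^{\ealpha}\varphi^{\beta}:+\sum_{\beta\in\Dp}(u|\ebeta)\der\varphi^{\beta}$, and on $\varphi^{\alpha}$ by $\dstz\varphi^{\alpha}=-\tfrac12\sum_{\beta,\gamma\in\Dp}(-1)^{\palpha\pbeta}c_{\beta,\gamma}^{\alpha}:\varphi^{\beta}\varphi^{\gamma}:$. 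I would then note that the term $\sum_{\beta}(u|\ebeta)\der\varphi^{\beta}$ pairs each $\der\varphi^{\beta}$ with a linear combination of the $\bJ^{u}$ for $u\in\g_{+}$ (via the nondegenerate pairing $\g_{-j}\times\g_{j}\to\C$), so these generators — namely $\der^{m}\varphi^{\beta}$ and the matching combinations of $\bJ^{u}$, $u\in\g_{+}$, together with all their derivatives — form acyclic pieces and can be canceled by a contracting homotopy. Concretely, I would build an explicit homotopy $h$ on the subalgebra generated by $\{\der^{m}\varphi^{\beta}\}$ and $\{\der^{m}\bJ^{u}: u\in\g_{+}\}$, extend it to $\pC_{\infty}$ by a filtration/induction on the number of such "contractible" generators, and conclude via the standard homological-perturbation lemma that $H(\pC_{\infty},\dstz)$ is computed by the subquotient complex generated by $\{\der^{m}\bJ^{u}: u\in\g_{\leq0}\}$ and $\{\varphi^{\alpha}(0):\alpha\in\Dp\}$ with $\der$-free exterior variables.

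Next, on that reduced complex the surviving differential is exactly the Chevalley–Eilenberg differential: the remaining part of $\dstz\bJ^{u}$ becomes $-\sum(-1)^{\palpha}c_{u,\beta}^{\alpha}\bJ^{\ealpha}\varphi^{\beta}(0)$ with $\alpha$ now ranging over $I\cup\Delta_{\leq0}$, i.e. the coadjoint action of $\g_{+}$ on $\g_{\leq0}\subset\g/\g_{+}$ tensored with the Chevalley differential on $\Lambda(\g_{+})$, while $\dstz\varphi^{\alpha}(0)=-\tfrac12\sum(-1)^{\palpha\pbeta}c_{\beta,\gamma}^{\alpha}\varphi^{\beta}(0)\varphi^{\gamma}(0)$ is precisely the Chevalley differential of $\Lambda(\g_{+})$. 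Because $\der$ commutes with $\dstz$ on this reduced complex and acts freely, the reduced complex is $\JS\otimes\bigl(S(\g_{\leq0}/\g_{+}\text{-part})\otimes\Lambda(\g_{+}), d_{CE}\bigr)$ — more carefully, it is the polynomial algebra $\JS$ (whose generators $\bJ^{u}(-n)$, $u\in\g_0$, $n\geq1$, are $\dstz$-closed since $\dstz\bJ^{u}\in\g_{+}$-span vanishes modulo contractibles for $u\in\g_0$... wait, I must be careful: $\dstz\bJ^{u}$ for $u\in\g_0$ still has the term with $\alpha\in\Delta_0$, so $\bJ^{u}(-n)$ is not closed) tensored with the Chevalley complex $C^{\bullet}(\g_{+},\C)=\Lambda(\g_{+})$ with coefficients built from $\g_{0}$. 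I would reorganize: after the homotopy reduction the complex is isomorphic to $S(\g_{0}[t^{-1}]t^{-1})\otimes \Lambda(\g_{+})$ with differential equal to the Chevalley–Eilenberg differential of $\g_{+}$ acting on the $\g_{+}$-module $S(\g_{0}[t^{-1}]t^{-1})$ — but here $\g_{+}$ acts trivially on $\g_{0}[t^{-1}]t^{-1}$ because $[\g_{+},\g_{0}]\subset\g_{+}$ and the image lands in variables already killed by the homotopy. Hence the differential is $\Id_{\JS}\otimes d_{CE}$ and by the Künneth formula $H(\pC_{\infty},\dstz)\simeq\JS\otimes H^{\bullet}(\g_{+},\C)$, which is the claim.

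The main obstacle, and the step requiring the most care, is making the homotopy argument rigorous: $\pC_{\infty}$ is infinite-dimensional, so I cannot naively invoke "every acyclic summand cancels." I would handle this by working conformal-weight by conformal-weight (each graded piece $\pC_{\infty}(n)$ is finite-dimensional by \eqref{conf dec eq}-type reasoning), and within each weight space setting up a decreasing filtration by the number of $\g_{+}$-type generators $\der^{m}\bJ^{u}$ ($u\in\g_+$) and $\der^m\varphi^\beta$ appearing, so that the associated graded differential is exactly the contractible Koszul differential pairing $\bJ^{u}\leftrightarrow\der\varphi^\beta$; the homotopy on the associated graded then lifts to a genuine contraction via homological perturbation, and the spectral sequence of this filtration degenerates at $E_1$ onto the Chevalley complex. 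The bookkeeping of signs (the $(-1)^{\palpha}$ and $(-1)^{\palpha\pbeta}$ factors) and of the precise index ranges ($I\cup\Delta_{\leq0}$ versus $I\cup\Delta_0$) is where errors are likely, so I would verify the identification of the reduced differential with $d_{CE}$ by a direct check on generators before invoking Künneth.
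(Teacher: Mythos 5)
Your proposal is correct and is essentially the paper's own argument: the paper filters the same complex (packaged as the Chevalley/semi-infinite complex of $\g_{+}[t]$ acting on $\JSn$) by the total number of generators, and its $E_{0}$-differential is exactly your Koszul pairing $J^{e_{-\alpha}}(-n)\mapsto n(e_{-\alpha}|e_{\alpha})\varphi^{\alpha}(-n)$ for $n\geq1$, whose contractibility (checked there separately for even and odd $\alpha$) reduces the complex, conformal weight by conformal weight, to $\JS\otimes\Lambda(\g_{+})$ with the Chevalley differential, giving $\JS\otimes H(\g_{+},\C)$. Two small corrections to your bookkeeping: the cancelled currents are the $\bJ^{e_{-\alpha}}$ with $\alpha\in\Dp$, i.e.\ $u\in\g_{<0}$ (there are no $\bJ^{u}$ with $u\in\g_{+}$ in $\pC_{\infty}$), and the $\g_{0}$-currents are honestly $\dstz$-closed by \eqref{dst0 eq} (since $[\g_{0},\g_{+}]\subset\g_{+}$ and $(\g_{0}|\g_{+})=0$), which is the clean reason the reduced differential is $\mathrm{Id}_{\JS}\otimes d_{CE}$, rather than your parenthetical worry about $\alpha\in\Delta_{0}$ terms or the ``image lands in variables killed by the homotopy'' justification.
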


The proof of Lemma \ref{Cinf lemma} will be given in Section \ref{Cinf subsec}.

\begin{cor}
\label{CX cor}
$H(\hpC_{\X},\dstz)\simeq H(\pC_{\infty},\dstz)\otimes\C[\X]$.
\end{cor}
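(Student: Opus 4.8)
Corollary \ref{CX cor} asserts that passing from the classical limit $\pC_\infty = \hpC_\X|_{\X=0}$ back to the $\C[\X]$-family $\hpC_\X$ does not change the cohomology of ${\dst}_{(0)}$, beyond the expected base-change by $\C[\X]$. The plan is to exploit the conformal weight decomposition together with a flatness/freeness argument over $\C[\X]$.

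First I would observe that the differential ${\dst}_{(0)}$ preserves conformal weight (as noted for $C_k$, and the same grading assignments carry over verbatim to $\hpC_\X$ and $\pC_\infty$ since the generators $\bJ^u$, $\varphi^\alpha$ have the same conformal weights as $J^u$, $\varphi^\alpha$). Hence $\hpC_\X = \bigoplus_{n\geq 0} \hpC_\X(n)$ as a complex of $\C[\X]$-modules, with each $\hpC_\X(n)$ a \emph{finitely generated free} $\C[\X]$-module — indeed the PBW-type monomial basis in the $\bJ^u_{(-m)}$ and $\varphi^\alpha_{(-m)}$ of fixed conformal weight $n$ is, by the construction of $\hC_\X$ over $\C[\X]$, a $\C[\X]$-basis, and there are finitely many such monomials of each conformal weight. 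Likewise $\pC_\infty(n)$ is finite-dimensional over $\C$ and $\hpC_\X(n) \otimes_{\C[\X]} \C_0 \simeq \pC_\infty(n)$, compatibly with ${\dst}_{(0)}$.

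Now the argument is purely homological algebra for a bounded complex of finitely generated free modules over the PID $\C[\X]$, applied conformal-weight-by-conformal-weight. For each $n$, the complex $\hpC_\X(n)$ (bounded, since the charge grading is finite in fixed conformal weight) consists of finite free $\C[\X]$-modules; its cohomology $H(\hpC_\X(n),\dstz)$ is a finitely generated $\C[\X]$-module, hence a direct sum of a free part and torsion. The universal coefficients spectral sequence (or the standard short exact sequence $0 \to H(\hpC_\X(n))\otimes_{\C[\X]}\C_0 \to H(\hpC_\X(n)\otimes_{\C[\X]}\C_0) \to \mathrm{Tor}_1^{\C[\X]}(H(\hpC_\X(n)),\C_0) \to 0$) relates $H(\pC_\infty(n),\dstz)$ to $H(\hpC_\X(n),\dstz)\otimes_{\C[\X]}\C_0$. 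By Lemma \ref{Cinf lemma} we know $H(\pC_\infty,\dstz) \simeq \JS \otimes H(\g_+,\C)$ explicitly; in particular, in each conformal weight $n$ its $\C$-dimension is computed. To conclude that $H(\hpC_\X(n),\dstz)$ is free over $\C[\X]$ of rank equal to $\dim_\C H(\pC_\infty(n),\dstz)$ — which gives the corollary after summing over $n$ — I would either (a) exhibit explicit ${\dst}_{(0)}$-cocycle representatives in $\hpC_\X$ lifting the classes of Lemma \ref{Cinf lemma} (the generators $\bJ^u(-m)$ and the cocycles in $\Lambda(\g_+)$ representing $H(\g_+,\C)$, using the formula for $[{\dst}_\la \bJ^u]$ and $[{\dst}_\la \varphi^\alpha]$ in Lemma \ref{qcal lemma}, which degenerate correctly at $\X=0$), and show they generate a free $\C[\X]$-submodule mapping onto cohomology; or (b) argue by a dimension/semicontinuity count: the ranks of the boundary maps in $\hpC_\X(n)$ can only drop under specialization $\X=0$, so $\dim_\C H(\pC_\infty(n)) \geq \mathrm{rank}_{\C[\X]} H(\hpC_\X(n))$, and combining with the Tor sequence forces the torsion to vanish and the ranks to match.

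The main obstacle is ruling out $\X$-torsion in $H(\hpC_\X,\dstz)$ — i.e., showing no cohomology class of the family "disappears" except at $\X=0$ in a way that would make $H(\pC_\infty)$ strictly larger than the specialization of $H(\hpC_\X)$. The cleanest route is option (a): since Lemma \ref{Cinf lemma}'s isomorphism is proved via an explicit identification with $\JS \otimes H(\g_+,\C)$, one should be able to lift each classical cocycle to an honest ${\dst}_{(0)}$-cocycle in $\hpC_\X$ (the differential ${\dst}_{(0)}$ on $\hpC_\X$ agrees with that on $\pC_\infty$ up to terms proportional to $\X$ coming from the $\X$-dependence of $\bar{\mathit a}_\p, \bar{\mathit b}_\p$ in Lemma \ref{qcal lemma}, and these can be corrected order by order in $\X$ within each finite-dimensional conformal weight space), and the resulting free $\C[\X]$-module of cocycles surjects onto cohomology with the expected rank, leaving no room for torsion. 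Then Corollary \ref{CX cor} follows by base change.
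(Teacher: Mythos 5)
Your overall strategy is close in spirit to the paper's: the paper filters $\hpC_{\X}$ by the powers $\X^{p}\hpC_{\X}$, identifies the first page of the resulting spectral sequence with $H(\pC_{\infty},\dstz)\otimes\C[\X]$ via Lemma \ref{Cinf lemma}, and observes that the sequence collapses there because the representatives of these classes --- monomials in the $\bJ^{u}_{(-n)}$ with $u\in\g_{0}$ times Chevalley cocycles in the zero modes $\varphi^{\beta}_{0}$ --- are annihilated by $\dstz$ in $\hpC_{\X}$ itself, identically in $\X$. Your option (a), lifting the classical classes to honest cocycles over $\C[\X]$ and comparing via universal coefficients over the PID $\C[\X]$, is the same germ of an idea in a different homological wrapper.

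As written, however, there are genuine gaps. First, option (b) fails: semicontinuity gives $\dim_{\C}H(\pC_{\infty}(n),\dstz)\geq\operatorname{rank}_{\C[\X]}H(\hpC_{\X}(n),\dstz)$, which is automatic and points the wrong way; to ``force the torsion to vanish'' you would need the opposite inequality, i.e.\ that the generic rank already equals $\dim_{\C}H(\pC_{\infty}(n))$, which is essentially the genericity statement being established --- the argument is circular. Second, in option (a) the proposed order-by-order correction in $\X$ is both unjustified and unnecessary: the conformal-weight spaces of $\hpC_{\X}$ are finitely generated free over $\C[\X]$, not finite dimensional over $\C$, so an infinite correction procedure need not terminate inside $\C[\X]$; the actual point, which is exactly what makes the paper's spectral sequence collapse, is that no correction is needed at all, since the computation behind \eqref{dst0 eq} is uniform in $\X$ (so $\dstz\bJ^{u}=0$ for $u\in\g_{0}$ over $\C[\X]$) and $\dstz$ acts on the zero modes $\varphi^{\beta}_{0}$ as the $\X$-independent Chevalley differential of $\Lambda(\g_{+})$, so the representatives of Lemma \ref{Cinf lemma} are already cocycles in $\hpC_{\X}$. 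Third, even granting the lifts, your universal-coefficients argument at the single fibre $\X=0$ only shows that $H(\hpC_{\X},\dstz)\otimes_{\C[\X]}\C_{0}\to H(\pC_{\infty},\dstz)$ is an isomorphism and that there is no torsion supported at $\X=0$; it does not show that the free $\C[\X]$-module spanned by the lifted cocycles maps onto all of $H(\hpC_{\X},\dstz)$, since torsion or extra classes supported at $\X=c\neq 0$ are invisible at the origin. The paper obtains the global identification from the degeneration at the first page of the spectral sequence of the $\X$-adic filtration; you need that argument, or a substitute for it, to close this last step.
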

\begin{proof}
Let 
\begin{align*}
F_{p}\hpC_{\X}=\X^{p}\hpC_{\X}
\end{align*}
for all $p\geq0$. Since
\begin{align*}
\dstz(F_{p}\hpC_{\X})\subset F_{p}\hpC_{\X}
\end{align*}
for all $p\geq0$, $\{F_{p}\hpC_{\X}\}_{p\geq0}$ is a filtration of the complex $\hpC_{\X}$ over $\C[\X]$ and gives a spectral sequence $\{E_{\X,n}\}_{n\geq0}$. We extend the conformal weight on $\pC_{k}$ to $\hpC_{\X}$ by $\conf(\X)=0$ and get the homogeneous conformal weight decomposition of $\hpC_{\X}$. Since each of the homogeneous spaces is a free $\C[\X]$-module of finite rank, the induced spectral sequence on it converges so that the total spectral sequence $\{E_{\X,n}\}_{n\geq0}$ also converges. Denote by $d_{\X,r}$ the differential of $E_{\X,r}$ for $r\geq0$. It is straightforward that
\begin{align}
\label{EX1 eq}
E_{\X,1}=H(E_{\X,0},d_{\X,0})=H(\pC_{\infty})\otimes\C[\X].
\end{align}
By Lemma \ref{Cinf lemma}, 
\begin{align*}
H(\pC_{\infty})\simeq\JS\otimes H(\g_{+},\C)
\end{align*}
and this implies
\begin{align*}
E_{\X,1}=V^{\tau_{\X}}(\g_{0})\underset{\C[\X]}{\otimes}H(\g_{+},\C[\X]),
\end{align*}
where $V^{\tau_{\X}}(\g_{0})$ is a vertex subalgebra over $\C[\X]$ in $\hpC_{\X}$ generated by $\bJ^{u}(z)$ for all $u\in\g_{0}$ and $H(\g_{+},\C[\X])$ is the Lie superalgebra cohomology with the coefficients in the trivial $\g_{+}$-module $\C[\X]$ whose complex is identified with a subspace in $\hpC_{\X}$ spanned by all the elements
\begin{align*}
\varphi^{\beta_{1}}_{0}\cdots\varphi^{\beta_{s}}_{0}\ket{0}
\end{align*}
with $\beta_{i}\in\Dp$ and $s\geq0$ over $\C[\X]$. Since
\begin{align*}
\dstz V^{\tau_{\X}}(\g_{0})=\dstz H(\g_{+},\C[\X])=0,
\end{align*}
$d_{\X,r}=0$ for all $r\geq2$ and then the spectral sequence collapses at $E_{\X,1}=E_{\X,\infty}$. Therefore the assertion follows from \eqref{EX1 eq}.
\end{proof}

\begin{lemma}
The set
\begin{align*}
S=\{k\in\C\mid H(\pC_{k},\dstz)\simeq H(\pC_{\infty},\dstz)\}
\end{align*}
is Zariski dense in $\C$.
\end{lemma}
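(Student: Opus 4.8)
The plan is to derive the statement from Corollary \ref{CX cor} by a base-change (universal coefficient) argument over the principal ideal domain $\C[\X]$.

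First I would fix attention on one homogeneous conformal weight at a time. Using a PBW-type monomial basis in the generators $\bJ^u(z)$ ($u\in\g_{\leq0}$) and $\varphi^\alpha(z)$ ($\alpha\in\Dp$) and their derivatives, the $\C[\X]$-module underlying $\hpC_\X$ is free, the conformal weight and charge gradings on it are by $\C[\X]$-submodules, the differential $\dstz$ is $\C[\X]$-linear of conformal weight $0$, and for each $m$ the homogeneous subspace $\hpC_\X(m)$ is a subcomplex; being concentrated in finitely many charge degrees, it is a bounded complex of finite-rank free $\C[\X]$-modules. Moreover $\hpC_\X(m)\otimes_{\C[\X]}\C_\p\cong\hpC_\p(m)$ as complexes, and $\hpC_\p\cong\pC_k$ when $\p=(k+h^\vee)^{-1}$, while $\hpC_0=\pC_\infty$ by definition.

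Next I would invoke Corollary \ref{CX cor}, which asserts $H(\hpC_\X,\dstz)\simeq H(\pC_\infty,\dstz)\otimes\C[\X]$; in particular this is a free $\C[\X]$-module. Since the conformal weight and charge gradings descend to cohomology, each $H^n(\hpC_\X(m),\dstz)$ is a direct summand of $H(\hpC_\X,\dstz)$, hence a finitely generated projective --- therefore free --- $\C[\X]$-module. Now I would apply the universal coefficient theorem to the complex $(\hpC_\X(m),\dstz)$ of free modules and the ring map $\C[\X]\twoheadrightarrow\C_\p$ (legitimate because $\C_\p$ admits the length-one free resolution $0\to\C[\X]\xrightarrow{\,\X-\p\,}\C[\X]\to\C_\p\to0$), obtaining a short exact sequence
\[
0\to H^n(\hpC_\X(m),\dstz)\otimes_{\C[\X]}\C_\p\to H^n(\hpC_\p(m),\dstz)\to\operatorname{Tor}_1^{\C[\X]}\!\big(H^{n+1}(\hpC_\X(m),\dstz),\C_\p\big)\to0 .
\]
The $\operatorname{Tor}_1$-term vanishes because $H^{n+1}(\hpC_\X(m),\dstz)$ is free, so $H^n(\hpC_\p(m),\dstz)\cong H^n(\hpC_\X(m),\dstz)\otimes_{\C[\X]}\C_\p$ for every $\p\in\C$. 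Summing over all $n$ and all conformal weights $m$ and using Corollary \ref{CX cor} once more gives $H(\hpC_\p,\dstz)\cong H(\pC_\infty,\dstz)$ for every $\p\in\C$; specializing to $\p=(k+h^\vee)^{-1}$ yields $H(\pC_k,\dstz)\simeq H(\pC_\infty,\dstz)$ for every $k\neq-h^\vee$. Hence $S\supseteq\C\setminus\{-h^\vee\}$, which is Zariski dense in $\C$.

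Given Corollary \ref{CX cor} the rest is essentially bookkeeping. The one point that requires care is to make sure that, after restricting to a fixed conformal weight, one is genuinely dealing with finite-rank free $\C[\X]$-modules and a bounded complex, so that the passage ``a direct summand of a free module is free'' applies and the universal coefficient sequence behaves exactly as in the classical finite-dimensional situation. (One could instead argue by upper semicontinuity of $\p\mapsto\dim_\C H^n(\hpC_\p(m),\dstz)$, but that only yields a dense good locus separately in each conformal weight; the freeness provided by Corollary \ref{CX cor} is precisely what allows one to conclude simultaneously for all conformal weights --- indeed for all $\p$.)
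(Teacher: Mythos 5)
Your universal-coefficient bookkeeping is fine (restriction to a conformal weight, finite-rank freeness of $\hpC_\X(m)$, the resolution $0\to\C[\X]\xrightarrow{\X-\p}\C[\X]\to\C_\p\to0$, vanishing of $\operatorname{Tor}_1$ against a torsion-free cohomology), but the decisive input is not justified: you take each $H^n(\hpC_\X(m),\dstz)$ to be a free $\C[\X]$-module because it is a direct summand of $H(\hpC_\X,\dstz)\simeq H(\pC_\infty,\dstz)\otimes\C[\X]$. This reads Corollary \ref{CX cor} as an isomorphism of $\C[\X]$-modules valid globally in $\X$, i.e.\ as the assertion that $H(\hpC_\X,\dstz)$ has no $(\X-\p)$-torsion for any $\p$. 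That is precisely the content at issue and is more than the corollary's proof delivers: the corollary comes from the spectral sequence of the unbounded decreasing filtration $\X^{p}\hpC_\X$, and degeneration at $E_{\X,1}$ only controls the associated graded of the induced filtration on cohomology; a class killed by $\X-\p$ with $\p\neq0$ is invisible there, because on such a class the induced filtration is not separated (toy model: the complex $\C[\X]\xrightarrow{\X-1}\C[\X]$ has vanishing $\X$-adic $E_1$ although $H^{1}\cong\C[\X]/(\X-1)\neq0$, and base change fails at $\X=1$). Torsion of exactly this kind is the mechanism by which a non-critical level could fail to be generic. Accordingly, your argument proves far more than the lemma, namely $S\supseteq\C\setminus\{-h^{\vee}\}$, i.e.\ that \emph{every} non-critical level is generic; this is nowhere claimed in the paper, would make Definition \ref{generic def} vacuous away from the critical level, and would render the non-generic-level analysis of Section \ref{Miura sec} and the final steps of Theorems \ref{WBn thm} and \ref{W2n thm} unnecessary. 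The freeness step is therefore a genuine gap rather than bookkeeping.

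Your closing parenthetical also dismisses the semicontinuity route too quickly: that route is the paper's actual proof, and it suffices for the lemma. Corollary \ref{CX cor} is used there only to identify the expected fibre $H(\hpC_\X,\dstz)\otimes_{\C[\X]}\C_\p$ and to guarantee that base change holds at $\p=0$ in each conformal weight; cohomology-and-base-change (upper semicontinuity) for the finite-rank free complexes $\hpC_\X(n)$ then yields, for each conformal weight $n$, a Zariski-open set $U(n)\ni 0$ whose complement in $\C$ is finite. The intersection $\bigcap_{n}U(n)$ has countable complement, so the corresponding set of levels $k$ is infinite and hence Zariski dense in $\C$ --- which is all the lemma asserts; there is no need to conclude simultaneously for all $\p$. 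To salvage your stronger route you would have to prove torsion-freeness of $H(\hpC_\X(m),\dstz)$ directly; short of that, the argument should fall back on the semicontinuity and countable-intersection argument, i.e.\ on the paper's proof.
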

\begin{proof}
By Corollary \ref{CX cor},
\begin{align*}
H(\hpC_{\X},\dstz)\underset{\C[\X]}{\otimes}\C_{\p}\simeq H(\pC_{\infty},\dstz).
\end{align*}
Suppose that $k\in\C\backslash\{-h^{\vee}\}$. If
\begin{align}
\label{generic eq}
H(\hpC_{\X}\underset{\C[\X]}{\otimes}\C_{\p},\dstz)=H(\hpC_{\X},\dstz)\underset{\C[\X]}{\otimes}\C_{\p}
\end{align}
holds for $\p=(k+h^{\vee})^{-1}$, it follows that $k\in S$ because
\begin{align*}
\pC_{k}=\hpC_{\p}=\hpC_{\X}\underset{\C[\X]}{\otimes}\C_{\p}.
\end{align*}
Furthermore \eqref{generic eq} holds for $\p=0$ because
\begin{align*}
\pC_{\infty}=\hpC_{\X}\underset{\C[\X]}{\otimes}\C_{0}.
\end{align*}
Therefore
\begin{align}
\label{homo generic eq}
H(\hpC_{\X}(n)\underset{\C[\X]}{\otimes}\C_{\p},\dstz)=H(\hpC_{\X}(n),\dstz)\underset{\C[\X]}{\otimes}\C_{\p}
\end{align}
holds for $\p=0$ and all $n\in\frac{1}{2}\Z_{\geq0}$, where $\hpC_{\X}(n)$ is the homogeneous conformal weight space of conformal weight $n$ in $\hpC_{\X}$. Since $\hpC_{\X}(n)$ is a free $\C[\X]$-module of finite rank, the upper semi-continuous theorem (cf. \cite{Ha}) in a neighborhood of $\X=0$ implies that \eqref{homo generic eq} holds for some neighborhood $U(n)$ of $\p=0$ in $\C$. The complement $\C\backslash U(n)$ is a finite set because $U(n)$ is a non-empty Zariski open set in $\C$. Hence
\begin{align*}
\C\ \backslash\bigcap_{n\geq0}U(n)=\bigcup_{n\geq0}(\C\backslash U(n))
\end{align*}
is a countable set. Therefore a set
\begin{align*}
S'=\{k\in\C\backslash\{-h^{\vee}\}\mid(k+h^{\vee})^{-1}\in\bigcap_{n\geq0}U(n)\}
\end{align*}
is dense in $\C$. The assertion follows from the fact $S'\subset S$ because \eqref{generic eq} holds for $k\in S'$.
\end{proof}

\begin{definition}\label{generic def}
$k\in\C$ is called generic if $H(\pC_{k},\dstz)\simeq H(\pC_{\infty},\dstz)$.
\end{definition}

If $k$ is generic, Lemma \ref{Cinf lemma} implies that
\begin{align}
\label{E1 eq}
H(\pC_{k},\dstz)\simeq\Vtau\otimes H(\g_{+},\C),
\end{align}
where the complex $\Lambda(\g_{+})$ of $H(\g_{+},\C)$ is identified with the subspace of $\pC_{k}$ spanned by
\begin{align*}
\varphi^{\beta_{1}}_{0}\cdots\varphi^{\beta_{s}}_{0}\ket{0}
\end{align*}
for all $\beta_{i}\in\Dp$ and $s\geq0$.

\begin{prop}
\label{E1 prop}
If $k$ is generic,
\begin{align*}
E_{1}\simeq\Vtau\otimes H(\g_{+},\C)\otimes\Fne.
\end{align*}
\end{prop}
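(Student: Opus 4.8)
The plan is to obtain Proposition \ref{E1 prop} as a direct assembly of two results already in hand, so the proof will be short. First I would recall Lemma \ref{E1 lemma}, which states $E_1 = H(\pC_k,\dstz)\otimes\Fne$; this holds because, under the decomposition \eqref{pCk eq} of $C_k$, the $E_0$-differential $d_0 = \dstz$ acts only on the tensor factor $\pC_k$ and leaves $\Fne$ untouched, so taking $d_0$-cohomology factors through that factor.

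Next I would feed in the generic-$k$ identification \eqref{E1 eq}, namely $H(\pC_k,\dstz)\simeq\Vtau\otimes H(\g_+,\C)$. This is the substantive input: it follows from Lemma \ref{Cinf lemma}, which computes the cohomology of the classical limit $\pC_\infty$ as $\JS\otimes H(\g_+,\C)$, combined with the semicontinuity argument underlying Definition \ref{generic def} that lets one deform from $\pC_\infty$ back to $\pC_k$ for $k$ in a Zariski-dense set. Substituting this into Lemma \ref{E1 lemma} immediately gives $E_1\simeq\Vtau\otimes H(\g_+,\C)\otimes\Fne$.

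The only point I would be careful to spell out is that the isomorphisms are taken compatibly and as identifications of concrete subspaces: in \eqref{E1 eq} the factor $\Vtau$ is the vertex subalgebra of $\pC_k$ generated by the currents $J^u(z)$ with $u\in\g_0$, and the Chevalley complex $\Lambda(\g_+)$ of $H(\g_+,\C)$ is realized as the span of the monomials $\varphi^{\beta_1}_0\cdots\varphi^{\beta_s}_0\ket{0}$ with $\beta_i\in\Dp$; since these all live inside $\pC_k\subset C_k$ and $\Fne$ is a genuine tensor factor, the three-fold tensor decomposition of $E_1$ is unambiguous and respects the conformal-weight grading used to establish convergence. I do not anticipate any real obstacle here — the entire content of the proposition has been discharged into Lemma \ref{E1 lemma} and Lemma \ref{Cinf lemma} — so the write-up amounts to citing those two results together with the definition of genericity.
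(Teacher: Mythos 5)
Your proof is correct and is essentially identical to the paper's: it cites Lemma \ref{E1 lemma} and then substitutes the generic-$k$ identification \eqref{E1 eq} coming from Lemma \ref{Cinf lemma} and Definition \ref{generic def}. (Note only that genericity, by definition, already gives $H(\pC_{k},\dstz)\simeq H(\pC_{\infty},\dstz)$, so the semicontinuity argument is needed for density of generic levels rather than for this proposition itself.)
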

\begin{proof}
By Lemma \ref{E1 lemma},
\begin{align*}
E_{1}=H(\pC_{k},\dstz)\otimes\Fne.
\end{align*}
The assertion immediately follows from \eqref{E1 eq}.
\end{proof}

Recall that $\Pp$ is the set of all indecomposable roots in $\Dp$ defined in \eqref{Pp eq}.

\begin{lemma}
\label{dstPp lemma}
Let $\alpha\in\Pp$. Then
\begin{align*}
\dstz J^{e_{-\alpha}}=-\sum_{\begin{subarray}{c}\beta\in[\alpha]\\ \gamma\in I\cup\Delta_{0}\end{subarray}}(-1)^{\pgamma}c_{-\alpha,\beta}^{\gamma}:J^{\egamma}\varphi^{\beta}:+(e_{-\alpha}|e_{\alpha})(k+h^{\vee})\der\varphi^{\alpha}
\end{align*}
\end{lemma}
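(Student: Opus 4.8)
The plan is to read off $\dstz J^{e_{-\alpha}}$ from the formula for $[{\dst}_{\la}J^{u}]$ recorded in Section~\ref{KW subsec}. Putting $u=e_{-\alpha}$ there (which makes sense since $\alpha\in\Pp\subset\Dp$ forces $e_{-\alpha}\in\g_{<0}\subset\g_{\leq0}$) and extracting the coefficient of $\la^{0}$ gives
\begin{align*}
\dstz J^{e_{-\alpha}}=-\sum_{\begin{subarray}{c}\beta\in\Dp\\ \gamma\in I\cup\Delta_{\leq0}\end{subarray}}(-1)^{\pgamma}c_{-\alpha,\beta}^{\gamma}:J^{\egamma}\varphi^{\beta}:+\sum_{\beta\in\Dp}\mathit{a}_{k}(e_{-\alpha}|\ebeta)\,\der\varphi^{\beta}.
\end{align*}
So two things remain: (i) trim the double sum to $\beta\in[\alpha]$, $\gamma\in I\cup\Delta_{0}$, and (ii) evaluate the constants $\mathit{a}_{k}(e_{-\alpha}|\ebeta)$.

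For (i): a term is nonzero only when $\gamma=\beta-\alpha$ as $\h$-weights. By Lemma~\ref{Pp dec lemma} we have $\alpha,\beta\in\Delta_{\frac{1}{2}}\sqcup\Delta_{1}$, so $e_{\gamma}$ would sit in $\g_{-\frac{1}{2}}\oplus\g_{0}\oplus\g_{\frac{1}{2}}$; as $\gamma\in I\cup\Delta_{\leq0}$, the case $\gamma\in\Delta_{\frac{1}{2}}$ drops out. If $\gamma\in\Delta_{-\frac{1}{2}}$ then comparing degrees forces $\alpha\in\Delta_{1}$ and $\beta\in\Delta_{\frac{1}{2}}$, and $\alpha=\beta+(-\gamma)$ exhibits $\alpha$ as a sum of the two roots $\beta,-\gamma\in\Dp$, contradicting $\alpha\in\Pp$. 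Hence $\gamma\in I\cup\Delta_{0}$, whence $\beta-\alpha\in Q_{0}$ and $\beta\in[\alpha]$; conversely for $\beta\neq\alpha$ with nonzero bracket, $\gamma=\beta-\alpha$ automatically lies in $\Delta_{0}$, while for $\beta=\alpha$ the surviving $\gamma$ are in $I$ and account for the Cartan part of $[e_{-\alpha},\ealpha]$. This produces the first sum of the Lemma (all other index combinations carry a vanishing structure constant).

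For (ii): $\mathit{a}_{k}(e_{-\alpha}|\ebeta)=\str_{\g}\big((\ad e_{-\alpha})p_{+}(\ad\ebeta)\big)+k(e_{-\alpha}|\ebeta)$ is a weight-$(\beta-\alpha)$ quantity, hence vanishes unless $\beta=\alpha$. For $\beta=\alpha$ it remains to show $\str_{\g}\big((\ad e_{-\alpha})p_{+}(\ad\ealpha)\big)=h^{\vee}(e_{-\alpha}|\ealpha)$; with the normalization $\kappa_{\g}=2h^{\vee}\inv$ this gives $\mathit{a}_{k}(e_{-\alpha}|\ealpha)=(k+h^{\vee})(e_{-\alpha}|\ealpha)$ and hence the last term of the Lemma. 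Writing $j$ for the $\Gamma$-degree of $\alpha$ and $T_{m}=\str_{\g_{m}}\big((\ad e_{-\alpha})(\ad\ealpha)|_{\g_{m}}\big)$, the $\Gamma$-supports give $\str_{\g}\big((\ad e_{-\alpha})p_{+}(\ad\ealpha)\big)=\sum_{m>-j}T_{m}$ and $\str_{\g}\big((\ad e_{-\alpha})p_{\leq0}(\ad\ealpha)\big)=\sum_{m\leq -j}T_{m}$, and the two sums add up to $\kappa_{\g}(e_{-\alpha}|\ealpha)=2h^{\vee}(e_{-\alpha}|\ealpha)$; so it is enough to see they are equal. Invariance of $\inv$, transposing the two copies of $\ad$ (each skew up to the sign $(-1)^{\palpha}$, so that the even operator $(\ad e_{-\alpha})(\ad\ealpha)$ is carried to itself), yields $T_{m}=T_{-m-j}$ for all $m$; reindexing, the difference of the two sums collapses to $\sum_{-j<m<0}T_{m}-T_{0}+T_{-j}$, and $T_{-j}=T_{0}$. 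If $j=\frac{1}{2}$ the range $-j<m<0$ is empty; if $j=1$ it contributes only $T_{-\frac{1}{2}}$, which vanishes because indecomposability of $\alpha\in\Pp_{1}$ forces $\ad\ealpha$ to kill $\g_{-\frac{1}{2}}$ (else $[\ealpha,e_{\mu}]\neq0$ for some $\mu\in\Delta_{-\frac{1}{2}}$ would give $\alpha=(\alpha+\mu)+(-\mu)$ with both summands in $\Dp$).

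I expect (ii) to be the real obstacle: the supertrace identity genuinely uses the indecomposability of $\alpha$ (it is false for a decomposable positive root, where $\str_{\g}\big((\ad e_{-\alpha})p_{+}(\ad\ealpha)\big)$ need not be half the Killing form), and one must check that the sign in $T_{m}=T_{-m-j}$ is $+1$, which comes from the form being even and $(\ad e_{-\alpha})(\ad\ealpha)$ being an even operator. Part (i) and the reduction of $\mathit{a}_{k}(e_{-\alpha}|\ebeta)$ for $\beta\neq\alpha$ are routine weight and degree bookkeeping, modulo keeping track of the parity signs $(-1)^{\palpha},(-1)^{\pgamma}$.
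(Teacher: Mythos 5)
Your proof is correct and takes essentially the same route as the paper: you read off $\dstz J^{e_{-\alpha}}$ from the general formula for $[{\dst}_{\la}J^{u}]$, use indecomposability of $\alpha$ to cut the quadratic sum down to $\beta\in[\alpha]$, $\gamma\in I\cup\Delta_{0}$, and establish $\mathit{a}_{k}(e_{-\alpha}|e_{\alpha})=(k+h^{\vee})(e_{-\alpha}|e_{\alpha})$ by showing that the $p_{+}$ and $p_{\leq0}$ halves of $\kappa_{\g}(e_{-\alpha}|e_{\alpha})$ agree, which is precisely the step the paper compresses into ``by the indecomposability of $\alpha$ and the invariant property of $\inv$'' and which your graded identity $T_{m}=T_{-m-j}$ (plus the vanishing of $T_{-\frac{1}{2}}$ for $\alpha\in\Pp_{1}$) spells out in detail. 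One cosmetic point: the bound $\beta\in\Delta_{\frac{1}{2}}\sqcup\Delta_{1}$ follows from $\gamma\in I\cup\Delta_{\leq0}$ together with $\deg\alpha\leq1$, not from Lemma \ref{Pp dec lemma} applied to $\beta$, but this does not affect the argument.
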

\begin{proof}
Let $\alpha\in\Pp$. We have
\begin{align*}
{\dst}_{(0)}J^{e_{-\alpha}}=-\sum_{\begin{subarray}{c}\beta\in\Dp\\ \gamma\in I\cup\Delta_{\leq0}\end{subarray}}(-1)^{\pgamma}c_{-\alpha,\beta}^{\gamma}:J^{\egamma}\varphi^{\beta}:+\mathit{a}_{k}(e_{-\alpha}|\ealpha)\der\varphi^{\alpha},
\end{align*}
where
\begin{align*}
\mathit{a}_{k}(e_{-\alpha}|\ealpha)=\str_{\g}((\ad e_{-\alpha})p_{+}(\ad \ealpha))+k(e_{-\alpha}|e_{\alpha}).
\end{align*}
The indecomposablility of $\alpha$ implies that $\beta\in[\alpha]$ if $c_{-\alpha,\beta}^{\gamma}\neq0$ for $\beta\in\Dp$ and $\gamma\in I\cup\Delta_{0}$. It is enough to show that
\begin{align}
\label{dstPp eq}
\mathit{a}_{k}(e_{-\alpha}|\ealpha)=(k+h^{\vee})(e_{-\alpha}|e_{\alpha}).
\end{align}
First,
\begin{align*}
\kappa_{\g}(e_{-\alpha}|\ealpha)=\str_{\g}((\ad e_{-\alpha})p_{+}(\ad \ealpha))+\str_{\g}((\ad e_{-\alpha})p_{\leq0}(\ad \ealpha)),
\end{align*}
where $p_{\leq0}:\g\rightarrow\g_{\leq0}$ is the projection map. Next, we have
\begin{align*}
\str_{\g}((\ad e_{-\alpha})p_{+}(\ad \ealpha))=\str_{\g}((\ad e_{-\alpha})p_{\leq0}(\ad \ealpha))
\end{align*}
by the indecomposablility of $\alpha$ and the invariant property of $\inv$. Thus,
\begin{align*}
\str_{\g}((\ad e_{-\alpha})p_{+}(\ad \ealpha))=\frac{1}{2}\kappa_{\g}(e_{-\alpha}|\ealpha)=h^{\vee}(e_{-\alpha}|\ealpha).
\end{align*}
Therefore the formula \eqref{dstPp eq} holds. This completes the proof.
\end{proof}

\subsection{The vertex superalgebra structure of $E_{1}$}\label{VAstr sec}
Notice that the cohomology $H(\pC_{k},\dstz)$ inherits a vertex superalgebra structure from $\pC_{k}$. Thus,
\begin{align*}
E_{1}=H(\pC_{k},\dstz)\otimes\Fne
\end{align*}
is a vertex superalgebra. Assume that $k$ is generic. By \eqref{E1 eq},
\begin{align*}
H(\pC_{k},\dstz)\simeq\Vtau\otimes H(\g_{+},\C).
\end{align*}
Recall that we denote by $\varphi^{\alpha}(0)$ the element of the complex $\Lambda(\g_{+})$ of $H(\g_{+},\C)$ corresponding to $e_{\alpha}\in\g_{0}$ with reversed parity for $\alpha\in\Dp$.

\begin{lemma}
\label{trivial lemma}
\begin{align*}
H^{0}(\g_{+},\C)=\C,\quad
H^{1}(\g_{+},\C)=\bigoplus_{\alpha\in\Pp}\C\psi_{\alpha},
\end{align*}
where $\psi_{\alpha}$ is the non-zero cohomology class of $\varphi^{\alpha}(0)$ in $H^{1}(\g_{+},\C)$.
\end{lemma}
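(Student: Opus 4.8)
The plan is to compute directly with the Chevalley--Eilenberg complex of $\g_{+}$ with trivial coefficients, which is the superexterior algebra $\Lambda(\g_{+})$ equipped with the differential obtained by restricting $\dstz$ to the $\Lambda(\g_{+})$-factor sitting inside $\pC_{k}$. Explicitly, $d$ kills $\Lambda^{0}(\g_{+})=\C\ket{0}$, and on degree one it is read off from the formula for $[{\dst}_{\la}\varphi^{\alpha}]$ recorded in Section \ref{KW subsec}:
\begin{align*}
d\,\varphi^{\alpha}(0)=-\tfrac{1}{2}\sum_{\beta,\gamma\in\Dp}(-1)^{\palpha\pbeta}c_{\beta,\gamma}^{\alpha}\,\varphi^{\beta}(0)\varphi^{\gamma}(0).
\end{align*}
Since the trivial module is one-dimensional, $H^{0}(\g_{+},\C)=\C^{\g_{+}}=\C$ is immediate. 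Because $d$ vanishes on $\Lambda^{0}(\g_{+})$, there are no degree-one coboundaries, so $H^{1}(\g_{+},\C)=\Ker\bigl(d\colon\Lambda^{1}(\g_{+})\to\Lambda^{2}(\g_{+})\bigr)$, and it remains only to identify this kernel.

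The next step is to observe that a cochain $\xi=\sum_{\alpha\in\Dp}a_{\alpha}\varphi^{\alpha}(0)$ is a cocycle if and only if $\sum_{\alpha}a_{\alpha}c_{\beta,\gamma}^{\alpha}=0$ for all $\beta,\gamma\in\Dp$ (after using the super-antisymmetry $c_{\gamma,\beta}^{\alpha}=-(-1)^{\pbeta\pgamma}c_{\beta,\gamma}^{\alpha}$); equivalently, the linear functional on $\g_{+}$ dual to $\xi$ annihilates $[\g_{+},\g_{+}]$. Since $\g_{+}=\bigoplus_{\alpha\in\Dp}\C\ealpha$ is a sum of root spaces and the bracket is graded by the $\h$-weight, $[\g_{+},\g_{+}]$ is a coordinate subspace with respect to the basis $\{\ealpha\}_{\alpha\in\Dp}$, so the cocycle space is precisely $\operatorname{span}\{\varphi^{\alpha}(0)\mid \ealpha\notin[\g_{+},\g_{+}]\}$. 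Thus the whole statement reduces to the claim
\begin{align*}
[\g_{+},\g_{+}]=\bigoplus_{\alpha\in\Dp\setminus\Pp}\C\,\ealpha,
\end{align*}
i.e.\ $\ealpha\in[\g_{+},\g_{+}]$ exactly when $\alpha$ is decomposable in $\Dp$.

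One inclusion is free: any $[\ebeta,\egamma]$ with $\beta,\gamma\in\Dp$ lies in $\C e_{\beta+\gamma}$, and $\beta+\gamma$ (having positive $\Gamma$-degree) is decomposable in $\Dp$ whenever it is a root; hence $[\g_{+},\g_{+}]\subseteq\operatorname{span}\{\ealpha\mid\alpha\in\Dp\setminus\Pp\}$, and in particular $\ealpha\notin[\g_{+},\g_{+}]$ for $\alpha\in\Pp$. The reverse inclusion --- that for each decomposable $\alpha\in\Dp$ some decomposition $\alpha=\beta+\gamma$ with $\beta,\gamma\in\Dp$ has $[\ebeta,\egamma]\neq0$ --- is the one real point, and is where I expect the main (though routine) effort to go: it follows from the structure theory of the root system of the basic Lie superalgebra $\g$, analyzing the $\beta$-string through $\gamma$ by means of the $\slf_{2}$- (resp.\ $\osp(1,2)$-) triple attached to $\beta$, so that the structure constant $N_{\beta,\gamma}$ is nonzero whenever $\beta+\gamma\in\Delta$; the only situation needing care, $\beta=\gamma$ odd, is harmless, since an isotropic odd $\beta$ has $2\beta\notin\Delta$ while for a non-isotropic odd $\beta$ one has $[\ebeta,\ebeta]\neq0$. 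Granting the claim, the degree-one cocycle space is exactly $\bigoplus_{\alpha\in\Pp}\C\,\varphi^{\alpha}(0)$; as there are no degree-one coboundaries, each $\varphi^{\alpha}(0)$ with $\alpha\in\Pp$ has nonzero class $\psi_{\alpha}$ and these form a basis of $H^{1}(\g_{+},\C)$, which is the assertion. (Alternatively, the full identification $H^{1}(\g_{+},\C)\simeq\bigoplus_{[\beta]\in[\Pp]}\C^{[\beta]}$, together with $\sum_{[\beta]}\dim\C^{[\beta]}=|\Pp|$, can be invoked from Kostant's computation of the cohomology of nilradicals \cite{Kos}, giving the same conclusion.)
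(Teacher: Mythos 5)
Your proposal is correct and takes essentially the same route as the paper: compute the Chevalley--Eilenberg complex of $\g_{+}$ in degrees $0$ and $1$ and identify the degree-one cocycle space with the span of the $\varphi^{\alpha}(0)$ for $\alpha$ indecomposable in $\Dp$ (there being no degree-one coboundaries). The only difference is presentational — you rephrase the kernel as the annihilator of $[\g_{+},\g_{+}]$ and explicitly isolate the nonvanishing of some structure constant $c_{\beta,\gamma}^{\alpha}$ for decomposable $\alpha$, which is precisely the point the paper's proof asserts without further comment.
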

\begin{proof}
Denote by $d_{+}$ be the differential of the complex $\Lambda(\g_{+})$ of $H(\g_{+},\C)$. Since
\begin{align*}
\Lambda^{0}(\g_{+})=\C,\quad
d_{+}\cdot1=0,
\end{align*}
we obtain the first formula. Moreover
\begin{align*}
\Lambda^{1}(\g_{+})=\bigoplus_{\alpha\in\Dp}\C\varphi^{\alpha}(0)
\end{align*}
and
\begin{align*}
d_{+}\cdot\varphi^{\alpha}(0)
&=-\frac{1}{2}\sum_{\beta,\gamma\in\Dp}(-1)^{\palpha\pbeta}c_{\beta,\gamma}^{\alpha}\varphi^{\beta}(0)\varphi^{\gamma}(0)\\
&=-\sum_{\begin{subarray}{c}\beta\prec\gamma\in\Dp\\ \alpha=\beta+\gamma\end{subarray}}(-1)^{\palpha\pbeta}c_{\beta,\gamma}^{\alpha}\varphi^{\beta}(0)\varphi^{\gamma}(0)-\frac{1}{2}\sum_{\begin{subarray}{c}\beta\in\Dp\\ \alpha=2\beta\end{subarray}}c_{\beta,\beta}^{\alpha}\varphi^{\beta}(0)^{2},
\end{align*}
where $\prec$ is any total order in $\Dp$. Therefore $d_{+}\cdot\varphi^{\alpha}(0)=0$ if and only if $\alpha$ is indecomposable in $\Dp$. This implies the second formula.
\end{proof}

Here,
\begin{align*}
&H^{0}(\pC_{k},\dstz)\simeq\Vtau,\\
&H^{1}(\pC_{k},\dstz)\simeq\Vtau\otimes\bigoplus_{\alpha\in\Pp}\C\psi_{\alpha}
\end{align*}
by Lemma \ref{trivial lemma}. The first formula is also a vertex superalgebra isomorphism. Let
\begin{align*}
\tS^{\alpha}(z)=\sum_{n\in\Z}\tS^{\alpha}_{n}z^{-n}=Y(\psi_{\alpha},z)
\end{align*}
be the vertex operator of $\psi_{\alpha}$ on $H(\pC_{k},\dstz)$ for $\alpha\in\Pp$.

\begin{lemma}
\label{Salpha lemma}
The following formulas hold:
\begin{align}
\label{Salpha eq1}&\tS^{\alpha}_{0}\ket{0}=\psi_{\alpha},\quad
\tS^{\alpha}_{n}\ket{0}=0\quad\mathrm{for}\ n\geq1,\\
\label{Salpha eq2}&[{J^{u}}_{\la}\tS^{\alpha}]=\sum_{\beta\in[\alpha]}c_{\beta,u}^{\alpha}\tS^{\beta},\\
\label{Salpha eq3}&\der\tS^{\alpha}(z)=-\frac{(k+h^{\vee})^{-1}}{(e_{\alpha}|e_{-\alpha})}\sum_{\begin{subarray}{c}\beta\in[\alpha]\\ \gamma\in I\cup\Delta_{0}\end{subarray}}(-1)^{\pbeta\pgamma}c_{\beta,-\alpha}^{\gamma}:J^{e_{\gamma}}(z)\tS^{\beta}(z):
\end{align}
for $\alpha\in\Pp$ and $u\in\g_{0}$.
\end{lemma}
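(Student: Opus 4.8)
The plan is to realize $\tS^{\alpha}(z)=Y(\psi_{\alpha},z)$ inside the vertex superalgebra $H(\pC_{k},\dstz)$ and to read off all three formulas from identities among the underlying states, using the dictionary $Y(TA,z)=\der Y(A,z)$ and $Y(A_{(-1)}B,z)=:Y(A,z)Y(B,z):$. The cocycle representatives I would use are $J^{u}$ with $u\in\g_{0}$, a $\dstz$-cocycle by \eqref{dst0 eq} whose class generates $H^{0}(\pC_{k},\dstz)\simeq\Vtau$, and, for $\alpha\in\Pp$, the field $\varphi^{\alpha}$, which is a $\dstz$-cocycle since $\dstz\varphi^{\alpha}=-\frac{1}{2}\sum_{\beta,\gamma\in\Dp}(-1)^{\palpha\pbeta}c_{\beta,\gamma}^{\alpha}:\varphi^{\beta}\varphi^{\gamma}:$ vanishes exactly when $\alpha$ is indecomposable (the computation in the proof of Lemma \ref{trivial lemma}), and whose class is $\psi_{\alpha}$. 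Because $H(\pC_{k},\dstz)$ is a vertex superalgebra with structure induced from $\pC_{k}$, the $n$-th products of classes are computed on cocycle representatives; in particular the normally-ordered product $:J^{\egamma}\varphi^{\beta}:$ represents $(J^{\egamma})_{(-1)}\psi_{\beta}$ for $\gamma\in I\cup\Delta_{0}$ and $\beta\in\Pp$. Formula \eqref{Salpha eq1} is then the vacuum axiom $\tS^{\alpha}(z)\ket{0}=\e^{zT}\psi_{\alpha}$.

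For \eqref{Salpha eq2} I would first compute $[{J^{u}}_{\la}\varphi^{\alpha}]$ in $\pC_{k}$. Since $[{\varphi^{\alpha}}_{\la}J^{u}]=\sum_{\beta\in\Dp}c_{u,\beta}^{\alpha}\varphi^{\beta}$ carries no $\la$, skew-symmetry of the $\la$-bracket shows $[{J^{u}}_{\la}\varphi^{\alpha}]$ is also $\la$-independent; rewriting $c_{u,\beta}^{\alpha}$ by super skew-symmetry of the structure constants and using that $c_{\beta,u}^{\alpha}\neq 0$ forces $\palpha=\pbeta+p(u)$, the parity reversal in $p(\varphi^{\alpha})=\palpha+1$ is compensated and one gets $[{J^{u}}_{\la}\varphi^{\alpha}]=\sum_{\beta\in\Dp}c_{\beta,u}^{\alpha}\varphi^{\beta}$. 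This element is a $\dstz$-cocycle, being a bracket of cocycles, so applying $\dstz$ and using $\dstz\varphi^{\beta}=0$ for $\beta\in\Pp$ gives $\sum_{\beta\in\Dp\backslash\Pp}c_{\beta,u}^{\alpha}\dstz\varphi^{\beta}=0$; as the $\dstz\varphi^{\beta}$ for distinct decomposable $\beta$ are nonzero and lie in distinct root weight spaces of $\pC_{k}$, all these coefficients vanish. Thus $[{J^{u}}_{\la}\varphi^{\alpha}]=\sum_{\beta\in[\alpha]}c_{\beta,u}^{\alpha}\varphi^{\beta}$ (the surviving constraint $\beta-\alpha\in Q_{0}$ confines $\beta$ to $[\alpha]$), and passing to classes, every summand now being a cocycle for $\psi_{\beta}$, yields \eqref{Salpha eq2}.

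For \eqref{Salpha eq3} I would solve Lemma \ref{dstPp lemma} for $\der\varphi^{\alpha}$: using $(e_{\alpha}|e_{-\alpha})\neq 0$,
\begin{align*}
\der\varphi^{\alpha}=\frac{(k+h^{\vee})^{-1}}{(e_{-\alpha}|e_{\alpha})}\Bigl(\dstz J^{e_{-\alpha}}+\sum_{\begin{subarray}{c}\beta\in[\alpha]\\ \gamma\in I\cup\Delta_{0}\end{subarray}}(-1)^{\pgamma}c_{-\alpha,\beta}^{\gamma}:J^{\egamma}\varphi^{\beta}:\Bigr).
\end{align*}
In $H(\pC_{k},\dstz)$ the term $\dstz J^{e_{-\alpha}}$ is zero, $\der$ descends to $T$ and commutes with passage to classes, and $:J^{\egamma}\varphi^{\beta}:$ represents $(J^{\egamma})_{(-1)}\psi_{\beta}$ by the observations above; applying $Y(-,z)$ and the state--field dictionary produces \eqref{Salpha eq3} once $(-1)^{\pgamma}c_{-\alpha,\beta}^{\gamma}/(e_{-\alpha}|e_{\alpha})$ is converted to $-(-1)^{\pbeta\pgamma}c_{\beta,-\alpha}^{\gamma}/(e_{\alpha}|e_{-\alpha})$ by super skew-symmetry of the structure constants and the super-symmetry of $\inv$ (using $\pgamma=\pbeta+\palpha$ on the support of $c_{\beta,-\alpha}^{\gamma}$).

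The bulk of the work is the routine but delicate sign bookkeeping (the super structure constants, the parity reversal on $\varphi^{\alpha}$, the sign rule for $\inv$); the one genuinely structural point is the step in \eqref{Salpha eq2} where closedness of the chain-level bracket under $\dstz$ forces the putative contributions of decomposable roots to vanish.
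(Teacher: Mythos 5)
Your proof is correct and follows essentially the same route as the paper: \eqref{Salpha eq1} is the vacuum axiom for the cocycle representative $\varphi^{\alpha}$, \eqref{Salpha eq2} comes from the chain-level bracket $[{J^{u}}_{\la}\varphi^{\alpha}]=\sum_{\beta\in[\alpha]}c_{\beta,u}^{\alpha}\varphi^{\beta}$ passed to cohomology, and \eqref{Salpha eq3} is obtained by solving the formula of Lemma \ref{dstPp lemma} using $\dstz J^{e_{-\alpha}}\equiv 0$ in $H(\pC_{k},\dstz)$. The only difference is that you justify restricting the sum in \eqref{Salpha eq2} to $\beta\in[\alpha]$ via the cocycle-plus-weight-space argument (and spell out the sign conversions), details the paper simply asserts.
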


\begin{proof}
The following formulas hold in $\pC_{k}$:
\begin{align*}
&\varphi^{\alpha}_{0}\ket{0}=\varphi_{\alpha}(0),\quad
\varphi^{\alpha}_{n}\ket{0}=0\quad\mathrm{for}\ n\geq1,\\
&[{J^{u}}_{\la}\varphi^{\alpha}]=\sum_{\beta\in[\alpha]}c_{\beta,u}^{\alpha}\varphi^{\beta}
\end{align*}
for $\alpha\in\Pp$ and $u\in\g_{0}$. These formulas imply \eqref{Salpha eq1} and \eqref{Salpha eq2}. Moreover
\begin{align*}
\dstz J^{e_{-\alpha}}\equiv0
\end{align*}
for all $\alpha\in\Pp$ in $H(\pC_{k},\dstz)$. Since
\begin{align*}
\dstz J^{e_{-\alpha}}=-\sum_{\begin{subarray}{c}\beta\in[\alpha]\\ \gamma\in I\cup\Delta_{0}\end{subarray}}(-1)^{\pgamma}c_{-\alpha,\beta}^{\gamma}:J^{\egamma}\varphi^{\beta}:+(e_{-\alpha}|\ealpha)(k+h^{\vee})\der\varphi^{\alpha}
\end{align*}
for all $\alpha\in\Pp$ by Lemma \ref{dstPp lemma}, the formula $\dstz J^{e_{-\alpha}}\equiv0$ holds if and only if
\begin{align*}
\der\varphi^{\alpha}(z)\equiv-\frac{(k+h^{\vee})^{-1}}{(e_{\alpha}|e_{-\alpha})}\sum_{\begin{subarray}{c}\beta\in[\alpha]\\ \gamma\in I\cup\Delta_{0}\end{subarray}}(-1)^{\pbeta\pgamma}c_{\beta,-\alpha}^{\gamma}:J^{e_{\gamma}}(z)\varphi^{\beta}(z):
\end{align*}
This implies \eqref{Salpha eq3} and completes the proof.
\end{proof}

By Lemma \ref{Salpha lemma}, $H^{1}(\pC_{k},\dstz)$ is a $\Vtau$-module and the operator $\tS^{\alpha}_{n}$ restricts to the maps
\begin{align*}
\tS^{\alpha}_{n}: \Vtau\rightarrow\Vtau\otimes\bigoplus_{\beta\in[\alpha]}\C\psi_{\beta}
\end{align*}
for all $\alpha\in\Pp$ and $n\in\Z$. Recall that $M_{[\alpha]}$ is a $\Vtau$-module defined by \eqref{M eq}.

\begin{lemma}
\label{Malpha lemma}
Let
\begin{align}
\label{Malpha eq}
H^{1}(\pC_{k},\dstz)\simeq\bigoplus_{[\alpha]\in[\Pp]}M_{[\alpha]}
\end{align}
be the vector space isomorphism defined by $A\psi_{\alpha}\mapsto A x_{\alpha}$ for all $A\in\Vtau$ and $\alpha\in\Pp$. Then this is also a $\Vtau$-module isomorphism.
\end{lemma}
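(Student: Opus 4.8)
The statement to prove is Lemma~\ref{Malpha lemma}: the vector space isomorphism
$H^{1}(\pC_{k},\dstz)\simeq\bigoplus_{[\alpha]\in[\Pp]}M_{[\alpha]}$ sending $A\psi_{\alpha}\mapsto A x_{\alpha}$ for $A\in\Vtau$ is in fact a $\Vtau$-module isomorphism. The plan is to observe that both sides are obtained from the $\g_{0}$-module $\bigoplus_{[\alpha]}\C^{[\alpha]}$ by induction up to a $\Vtau$-module, and that the map in question is the unique $\Vtau$-module map extending the identity on the bottom pieces; bijectivity is already granted, so only $\Vtau$-linearity needs checking, and by the PBW-type spanning of $\Vtau$-modules it suffices to verify compatibility with the action of the generating fields $J^{u}(z)$, $u\in\g_{0}$.

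First I would recall from Lemma~\ref{Salpha lemma}, equation~\eqref{Salpha eq2}, that the $\Vtau$-action on $H^{1}(\pC_{k},\dstz)$ is governed by $[{J^{u}}_{\la}\tS^{\alpha}]=\sum_{\beta\in[\alpha]}c_{\beta,u}^{\alpha}\tS^{\beta}$, equivalently $J^{u}_{(m)}(A\psi_{\alpha})$ is computed from the $\la$-bracket $[{J^{u}}_{\la}A]$ together with the ``lower-order'' term $\sum_{\beta\in[\alpha]}c_{\beta,u}^{\alpha}\tS^{\beta}_{0}A$ coming from the bracket of $J^{u}$ with $\psi_{\alpha}$. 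On the other side, $M_{[\alpha]}=\mathrm{Ind}^{\hat{\g}_{0}}_{\g_{0}[t]\oplus\C K}\C^{[\alpha]}\simeq\Vtau\otimes\bigoplus_{\gamma\in[\alpha]}\C x_{\gamma}$ as in~\eqref{M eq}, and its $\Vtau$-module structure is the standard one on an induced module: $J^{u}_{(m)}$ for $m\geq 0$ annihilates $x_{\gamma}$ when $m\geq 1$ and acts as $u\cdot x_{\gamma}=\sum_{\beta\in[\alpha]}c_{\beta,u}^{\alpha}x_{\beta}$ (the defining $\g_{0}$-action on $\C^{[\alpha]}$) when $m=0$, while for $m<0$ it acts freely; the commutation relation $[J^{u}_{(m)},J^{v}_{(n)}]$ is the affine one for $\tau_{k}$. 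I would then check that the generating relations match: on $\Vtau\subset M_{[\alpha]}$ the two module structures agree by definition, and on the bottom vectors $x_{\gamma}\leftrightarrow\psi_{\gamma}$ the relation $J^{u}_{(0)}\psi_{\gamma}=\sum_{\beta}c_{\beta,u}^{\gamma}\psi_{\beta}$ (from \eqref{Salpha eq2} at $\la^{0}$) is exactly $u\cdot x_{\gamma}$, while $J^{u}_{(m)}\psi_{\gamma}=0$ for $m\geq 1$ because $\tS^{\gamma}_{0}\ket{0}=\psi_{\gamma}$ and $\tS^{\gamma}_{n}\ket{0}=0$ for $n\geq 1$ by \eqref{Salpha eq1}, which translates into $\psi_{\gamma}$ being a ``$\g_{0}[t]t$-highest weight'' vector. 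Since a $\Vtau$-module is determined by the action of the modes $J^{u}_{(m)}$ and $M_{[\alpha]}$ is freely generated over the annihilator subalgebra by its bottom, any vector-space map that is the identity on $\Vtau$ and intertwines the $J^{u}_{(0)}$-action on the bottom, while sending bottom to bottom, automatically intertwines all modes; uniqueness of such an extension then identifies it with the given map $A\psi_{\alpha}\mapsto Ax_{\alpha}$.

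The step I expect to require the most care is verifying that the ``lower-order correction term'' in the $\Vtau$-action on $H^{1}(\pC_{k},\dstz)$ — i.e. the piece of $J^{u}_{(m)}(A\psi_{\alpha})$ beyond $(J^{u}_{(m)}A)\psi_{\alpha}$ — matches precisely the correction term in the induced-module action on $M_{[\alpha]}$. This amounts to an induction on the PBW length of $A$ (or on conformal weight), using the Borcherds/commutator formula $[J^{u}_{(m)}, A_{(n)}] = \sum_{j\geq 0}\binom{m}{j}(J^{u}_{(j)}A)_{(m+n-j)}$ together with \eqref{Salpha eq2} and \eqref{Salpha eq1}; each new $J^{v}_{(-n)}$ factored off $A$ contributes via the affine relations on the $\Vtau$-part and leaves the $[\alpha]$-index untouched, so the only genuinely new contribution at each stage is the $m\geq 0$, $j=m$ term hitting $\psi_{\alpha}$ directly, which is controlled by \eqref{Salpha eq1}–\eqref{Salpha eq2}. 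Once this bookkeeping closes, $\Vtau$-linearity follows and, the map already being a linear isomorphism by hypothesis, it is a $\Vtau$-module isomorphism, so the assertion follows.
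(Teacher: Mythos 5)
Your proposal is correct and follows essentially the same route as the paper: you use \eqref{Salpha eq1} and \eqref{Salpha eq2} to show that $J^{u}_{(0)}\psi_{\alpha}=\sum_{\beta\in[\alpha]}c_{\beta,u}^{\alpha}\psi_{\beta}$ and $J^{u}_{(n)}\psi_{\alpha}=0$ for $n\geq 1$, so the $\psi_{\alpha}$ satisfy exactly the defining relations of the $x_{\alpha}$ in the induced module $M_{[\alpha]}$, and then conclude by the induced-module (universal property / PBW) structure. The extra bookkeeping you describe with the commutator formula is just an explicit unwinding of the step the paper leaves implicit, so there is no substantive difference.
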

\begin{proof}
By \eqref{Salpha eq2},
\begin{align*}
[J^{u}_{(n)},\tS^{\alpha}_{m}]=\sum_{\beta\in[\alpha]}c_{\beta,u}^{\alpha}\tS^{\beta}_{n+m}
\end{align*}
for $u\in\g_{0}$, $\alpha\in\Pp$, $m,n\in\Z$. Therefore
\begin{align*}
J^{u}_{(0)}\psi_{\alpha}=\sum_{\beta\in[\alpha]}c_{\beta,u}^{\alpha}\psi_{\beta},\quad
J^{u}_{(n)}\psi_{\alpha}=0\quad\mathrm{for}\ n\geq1
\end{align*}
by \eqref{Salpha eq1}. This implies that \eqref{Malpha eq} is also a $\Vtau$-module isomorphism.
\end{proof}

By Lemma \ref{Malpha lemma}, we can regard the operators $\tS^{\alpha}_{n}$ as the maps
\begin{align*}
\tS^{\alpha}_{n}: \Vtau\rightarrow M_{[\alpha]}.
\end{align*}
Recall that $S^{\alpha}(z)$ is the formal power series whose coefficients are the operators from $\Vtau$ to $M_{[\alpha]}$ defined in \eqref{Screening def}.

\begin{prop}\label{tS prop}
\begin{align*}
S^{\alpha}(z)=\tS^{\alpha}(z)
\end{align*}
as the operator from $\Vtau$ to $M_{[\alpha]}((z))$ for all $\alpha\in\Pp$.
\end{prop}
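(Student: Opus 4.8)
The plan is to recognize the right-hand side of the defining formula \eqref{Screening def} as precisely the skew-symmetry identity for the vertex operator $\tS^{\alpha}(z)=Y(\psi_{\alpha},z)$ in the vertex superalgebra $H(\pC_{k},\dstz)\simeq\Vtau\otimes H(\g_{+},\C)$, transported along the module isomorphism of Lemma \ref{Malpha lemma}. First I would fix $\alpha\in\Pp$ and $A\in\Vtau$, regarded inside $H^{0}(\pC_{k},\dstz)$, and apply skew-symmetry:
\begin{align*}
\tS^{\alpha}(z)A=Y(\psi_{\alpha},z)A=(-1)^{p(\psi_{\alpha})p(A)}\,\e^{zT}\,Y(A,-z)\psi_{\alpha},
\end{align*}
where $T$ is the translation operator of $H(\pC_{k},\dstz)$ and $Y(A,-z)$ on the right is the vertex operator of $A\in\Vtau$ applied to the charge-one subspace $H^{1}(\pC_{k},\dstz)$. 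Since $\psi_{\alpha}$ is the class of $\varphi^{\alpha}(0)$, which has the parity of $e_{\alpha}$ reversed, we have $p(\psi_{\alpha})=\palpha+1$, hence $(-1)^{p(\psi_{\alpha})p(A)}=(-1)^{\palpha p(A)+p(A)}$, already matching the sign in \eqref{Screening def}. Thus the statement reduces to showing that $\e^{zT}Y(A,-z)\psi_{\alpha}$ is carried to $\e^{zL_{-1}}Y(A,-z)x_{\alpha}$, where on the target $Y$ denotes the $\Vtau$-module vertex operator on $M_{[\alpha]}$ and $L_{-1}$ the Sugawara operator of Lemma \ref{Lz lemma}.

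For the module vertex operator this is immediate: Lemma \ref{Malpha lemma} gives a $\Vtau$-module isomorphism $H^{1}(\pC_{k},\dstz)\simeq\bigoplus_{[\beta]\in[\Pp]}M_{[\beta]}$ sending $\psi_{\alpha}$ to $x_{\alpha}$, and the $\Vtau$-action on $H^{1}(\pC_{k},\dstz)$ is by definition the restriction of the $H(\pC_{k},\dstz)$-module structure, so $Y(A,-z)\psi_{\alpha}$ goes to $Y_{M_{[\alpha]}}(A,-z)x_{\alpha}$. The substantive step is to check that under this same isomorphism $T|_{H^{1}(\pC_{k},\dstz)}$ corresponds to $L_{-1}$ on $\bigoplus_{[\beta]}M_{[\beta]}$; note that a priori $L(z)$ is only a conformal vector of $\Vtau$, not of all of $H(\pC_{k},\dstz)$. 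I would do this by observing that $T$ and $L_{-1}$ are even operators satisfying the same relation $[T,J^{u}_{(n)}]=-n\,J^{u}_{(n-1)}=[L_{-1},J^{u}_{(n)}]$ for $u\in\g_{0}$, $n\in\Z$; since each $M_{[\beta]}=\mathrm{Ind}^{\hat{\g}_{0}}_{\g_{0}[t]\oplus\C K}\C^{[\beta]}$ is generated over $\Vtau$ by the vectors $x_{\gamma}$ with $\gamma\in[\beta]$, it then suffices to verify $T\psi_{\alpha}\mapsto L_{-1}x_{\alpha}$ for every $\alpha\in\Pp$. But $T\psi_{\alpha}=\tS^{\alpha}_{-1}\ket{0}$, and extracting the $z^{0}$-coefficient of \eqref{Salpha eq3} applied to $\ket{0}$ gives
\begin{align*}
\tS^{\alpha}_{-1}\ket{0}=-\frac{(k+h^{\vee})^{-1}}{(e_{\alpha}|e_{-\alpha})}\sum_{\begin{subarray}{c}\beta\in[\alpha]\\ \gamma\in I\cup\Delta_{0}\end{subarray}}(-1)^{\pbeta\pgamma}c_{\beta,-\alpha}^{\gamma}\,J^{e_{\gamma}}_{(-1)}\psi_{\beta},
\end{align*}
which Lemma \ref{Malpha lemma} carries exactly to the formula for $L_{-1}x_{\alpha}$ in Lemma \ref{Lxalpha lemma}.

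Putting this together yields $\tS^{\alpha}(z)A=S^{\alpha}(z)A$ for all $A\in\Vtau$, i.e. Proposition \ref{tS prop}; Proposition \ref{Screening prop} is then an immediate corollary, the formulas of Lemma \ref{Salpha lemma} becoming statements about $S^{\alpha}(z)$. The main obstacle, I expect, is the translation-operator identification above: it is the one place where the concrete structure of the complex enters (via Lemma \ref{dstPp lemma}, equivalently \eqref{Salpha eq3}) rather than purely formal vertex-superalgebra manipulations, and some care with conformal weights is needed both to make sense of $\e^{zT}$ on $H^{1}(\pC_{k},\dstz)$ and to see that the result lies in $M_{[\alpha]}((z))$.
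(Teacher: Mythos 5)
Your proposal is correct and follows essentially the same route as the paper: skew-symmetry for $Y(\psi_{\alpha},z)$, reduction of the statement to identifying $T$ with $L_{-1}$ under the isomorphism of Lemma \ref{Malpha lemma}, and the key verification $T\psi_{\alpha}=L_{-1}x_{\alpha}$ via \eqref{Salpha eq3} and Lemma \ref{Lxalpha lemma}. The only cosmetic difference is that you justify the reduction to generators by the commutator identity $[T,J^{u}_{(n)}]=[L_{-1},J^{u}_{(n)}]$, while the paper phrases it as a Leibniz-type splitting of $T(A\psi_{\alpha})$ and $L_{-1}(Ax_{\alpha})$ together with $TA=L_{-1}A$ on $\Vtau$; these are the same argument.
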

\begin{proof}
By the skew-symmetry in $H(\pC_{k},\dstz)$,
\begin{align*}
\tS^{\alpha}(z)A=(-1)^{\palpha p(A)+p(A)}\e^{z T}Y(A,-z)\psi_{\alpha}
\end{align*}
for all $A\in\Vtau$, where $Y(A,z)$ is the vertex operator on $H(\pC_{k},\dstz)$ and $T$ is the translation operator. All we need to show is that the actions of $L_{-1}$ and $T$ on $M_{[\alpha]}$ are the same via the isomorphism \eqref{Malpha eq}. However,
\begin{align}
\label{Sz eq1}L_{-1}(A x_{\alpha})&=(L_{-1}A)x_{\alpha}+A(L_{-1}x_{\alpha}),\\
\label{Sz eq2}T(A\psi_{\alpha})&=(T A)\psi_{\alpha}+A(T\psi_{\alpha})
\end{align}
for all $A\in\Vtau$ and
\begin{align*}
L_{-1}A=T A
\end{align*}
because $L(z)$ is a Virasoro field on $\Vtau$ by Lemma \ref{Lz lemma}. Therefore the first terms of \eqref{Sz eq1} and \eqref{Sz eq2} are the same. Thus, it is enough to show that
\begin{align*}
L_{-1}x_{\alpha}=T\psi_{\alpha}
\end{align*}
via the isomorphism \eqref{Malpha eq}. By \eqref{Salpha eq3},
\begin{align*}
T\psi_{\alpha}=-\frac{(k+h^{\vee})^{-1}}{(e_{\alpha}|e_{-\alpha})}\sum_{\begin{subarray}{c}\beta\in[\alpha]\\ \gamma\in I\cup\Delta_{0}\end{subarray}}(-1)^{\pbeta\pgamma}c_{\beta,-\alpha}^{\gamma}J^{e_{\gamma}}_{(-1)}\psi_{\beta}.
\end{align*}
This coincides with $L_{-1}x_{\alpha}$ by Lemma \ref{Lxalpha lemma}. So the proof is completed.
\end{proof}

\begin{proof}[Proof of Proposition \ref{Screening prop}]
By Proposition \ref{tS prop},
\begin{align*}
S^{\alpha}(z)=\tS^{\alpha}(z).
\end{align*}
Hence the assertion follows from Lemma \ref{Salpha lemma}.
\end{proof}

\subsection{Proof of Theorem \ref{main1 thm}}\label{proof subsec}
Assume that $k$ is generic. By Proposition \ref{E1 prop},
\begin{align*}
E_{1}=H(\pC_{k},\dstz)\otimes\Fne\simeq\Vtau\otimes H(\g_{+},\C)\otimes\Fne.
\end{align*}
Since $d_{1}$ is the vertex operator on $E_{1}$ induced by $\dnez$,
\begin{align*}
d_{1}=\sum_{\alpha\in\Pp_{\frac{1}{2}}}\int:\tS^{\alpha}(z)\Phi_{\alpha}(z):dz.
\end{align*}
Let $\td_{2}$ be a vertex operator on $E_{1}$ defined by
\begin{align*}
\td_{2}=\sum_{\alpha\in\Pp_{1}}\chi(e_{\alpha})\int\tS^{\alpha}(z)\ dz,
\end{align*}
which is the vertex operator induced by ${\dch}_{(0)}$.

\begin{lemma}\label{E1Q lem}
Assume that $k$ is generic. Let $Q$ be a vertex operator on $E_{1}$ defined by
\begin{align*}
Q=\sum_{\alpha\in\Pp_{\frac{1}{2}}}\int:\tS^{\alpha}(z)\Phi_{\alpha}(z):dz+\sum_{\alpha\in\Pp_{1}}\chi(e_{\alpha})\int\tS^{\alpha}(z)\ dz.
\end{align*}
Then $Q^{2}=0$ and $(E_{1},Q)$ is a complex with respect to the charge degrees. Moreover,
\begin{align*}
H(C_{k},d_{(0)})\simeq H(E_{1},Q).
\end{align*}
\end{lemma}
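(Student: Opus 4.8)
The plan is to exploit the spectral sequence $\{E_n\}_{n\ge0}$ attached to the weight filtration $F_\bullet C_k$ and to identify the remaining differentials $d_1,d_2$ with the two summands of $Q$. First I would recall that, since the total spectral sequence converges (each conformal weight space being finite dimensional) and $d_r=0$ for $r\ge3$, we have $H(C_k,d_{(0)})\simeq E_3=E_\infty$. So it suffices to compute $E_3$ directly from $(E_1,d_1)$ and $(E_2,d_2)$. By Proposition \ref{E1 prop} the page $E_1$ is $\Vtau\otimes H(\g_+,\C)\otimes\Fne$, and by construction of the filtration the induced differential $d_1$ is the vertex operator on $E_1$ induced by $\dnez$; using the identification $H^1(\pC_k,\dstz)\simeq\bigoplus_{[\alpha]}M_{[\alpha]}$ from Lemma \ref{Malpha lemma} together with Proposition \ref{tS prop} ($S^\alpha(z)=\tS^\alpha(z)$), this gives precisely $d_1=\sum_{\alpha\in\Pp_{\frac12}}\int:\tS^\alpha(z)\Phi_\alpha(z):dz$. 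Likewise $d_2$, the vertex operator induced by $\dchz$, equals $\td_2=\sum_{\alpha\in\Pp_1}\chi(e_\alpha)\int\tS^\alpha(z)\,dz$ on the surviving page.

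The key algebraic point I would then establish is that $d_1$ and $\td_2$ \emph{anticommute} and that $d_1^2=\td_2^2=0$ already as operators on the full $E_1$, so that $Q=d_1+\td_2$ satisfies $Q^2=0$ on $E_1$ (not merely on the associated graded). This should follow from the $\lambda$-bracket relations: $[{\Phi_\gamma}_\lambda J^u]=0$ gives that the $\tS$'s and the $\Phi$'s in the two sums act on independent tensor factors except through the common $\Vtau$ action, and the relevant brackets among the $\tS^\alpha$, $\Phi_\alpha$ reduce to structure constants $c^\gamma_{\alpha,\beta}$ which vanish in the required patterns because $\Pp$ consists of indecomposable restricted roots (so $\alpha+\beta\notin\Del$ for $\alpha,\beta\in\Pp$ lying in the same or summing to degree $>1$, and the degree-$1$ sum vanishes since $\g_{>1}$ contributes no $\varphi$'s after taking $\dstz$-cohomology). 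Concretely one checks $[Q,Q]=0$ by computing the OPEs of $:\tS^\alpha\Phi_\alpha:$ with $:\tS^\beta\Phi_\beta:$, of $:\tS^\alpha\Phi_\alpha:$ with $\tS^\beta$, and of $\tS^\alpha$ with $\tS^\beta$, using Lemma \ref{Salpha lemma} (especially \eqref{Salpha eq2} and \eqref{Salpha eq3}) and the fermionic bracket $[{\Phi_\beta}_\lambda\Phi_{\beta'}]=(f|[e_\beta,e_{\beta'}])$; the indecomposability hypothesis is exactly what forces the potentially obstructing terms to vanish.

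Granting $Q^2=0$, I would filter the complex $(E_1,Q)$ by the weight degree it inherits from $F_\bullet C_k$. The associated graded differential is $\gr Q$, whose degree-$0$ part is $d_1$ and whose next part is $d_2$; more precisely, running the spectral sequence of this filtered complex $(E_1,Q)$ recovers exactly the tail $E_1\Rightarrow E_2\Rightarrow E_3$ of the original spectral sequence — this is the standard fact that a spectral sequence from page $r$ onward is itself the spectral sequence of the filtered complex $(E_r,d_r)$ with its induced filtration. Since that tail collapses ($d_r=0$ for $r\ge3$), the spectral sequence of $(E_1,Q)$ also collapses at its $E_3$ page, giving $H(E_1,Q)\simeq E_3\simeq H(C_k,d_{(0)})$, which is the claim.

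The main obstacle I anticipate is the verification that $Q^2=0$ holds \emph{on the nose} on $E_1$ rather than only up to filtration-lowering terms; this is where the indecomposability of the roots in $\Pp$ (and Lemma \ref{Pp dec lemma}, $\Pp=\Pp_{\frac12}\sqcup\Pp_1$) must be used carefully to kill all the cross terms in $[d_1,\td_2]$ and in $d_1^2$, and to make sure no contributions survive from brackets landing in $\g_{>1}$. A secondary technical point is justifying cleanly that the spectral sequence of the filtered complex $(E_1,Q)$ reproduces the tail of the original one; I would handle this by checking that $F_\bullet C_k$ induces on $E_1$ a filtration whose $\gr$ and $d_0,d_1$ literally agree with $d_1,d_2$ of the ambient sequence, which is immediate from the definitions of $\deg$ and of $d_1,d_2$ given in Section \ref{weight subsec}.
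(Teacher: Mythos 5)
Your second half is exactly the paper's argument: filter $(E_{1},Q)$ by the degree inherited from the weight filtration on $C_{k}$, note that the resulting spectral sequence $\{\tE_{n}\}$ reproduces the pages $E_{n}$ for $n\geq1$ (the degree-$1$ and degree-$2$ parts of $Q$ being $d_{1}$ and $\td_{2}$), and conclude $H(E_{1},Q)\simeq\tE_{\infty}\simeq E_{\infty}\simeq H(C_{k},d_{(0)})$ by convergence. The gap is in your treatment of $Q^{2}=0$. You propose to check $d_{1}^{2}=[d_{1},\td_{2}]=\td_{2}^{2}=0$ by OPE computations among $:\tS^{\alpha}\Phi_{\alpha}:$ and $\tS^{\beta}$, with the cancellations enforced by indecomposability of the roots in $\Pp$. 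This does not go through as described: (i) the brackets $[{\tS^{\alpha}}_{\la}\tS^{\beta}]$ land in the charge-two component $\Vtau\otimes H^{2}(\g_{+},\C)$ of $H(\pC_{k},\dstz)$ and are computed nowhere — Lemma \ref{Salpha lemma} only gives brackets with the $J^{u}$ and the translation formula — and there is no a priori reason for them to vanish; (ii) indecomposability of $\alpha$ means $\alpha$ is not a sum of two elements of $\Dp$, it does \emph{not} imply $\alpha+\beta\notin\Delta$ for $\alpha,\beta\in\Pp$ (already for $f_{sub}$ in $\slf_{n}$ one has $\alpha_{2},\alpha_{3}\in\Pp$ with $\alpha_{2}+\alpha_{3}$ a root); (iii) the fermionic contractions $[{\Phi_{\alpha}}_{\la}\Phi_{\beta}]=(f|[e_{\alpha},e_{\beta}])$ for $\alpha,\beta\in\Pp_{\frac{1}{2}}$ are generically nonzero, since $(f|[\cdot,\cdot])$ is nondegenerate on $\g_{\frac{1}{2}}$, so the potentially obstructing terms in $d_{1}^{2}$ do not vanish one by one but would have to cancel against the uncontrolled $\tS$--$\tS$ terms. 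Indeed, on $C_{k}$ one has $\dnez^{2}=-[\dstz,\dchz]\neq0$ in general (only the total differential squares to zero), which is precisely why a term-by-term cancellation on $E_{1}$ is not the right mechanism.

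The statement you need is true for structural reasons, and this is how the paper argues: $d_{1}^{2}=0$ is automatic because $d_{1}$ is the differential of the $E_{1}$-page of a spectral sequence (equivalently, $\dnez^{2}=-[\dstz,\dchz]$ induces the zero map on $\dstz$-cohomology); and $[d_{1},\td_{2}]=0$, $\td_{2}^{2}=0$ because the relations $[\dnez,\dchz]=0$ and $\dchz^{2}=0$ hold already on $C_{k}$ (the $\varphi^{\alpha}$ pairwise supercommute and supercommute with the $\Phi_{\beta}$), and $d_{1},\td_{2}$ are the operators these induce on $E_{1}$. Replacing your OPE verification by this descent argument closes the gap; the remainder of your proposal then coincides with the paper's proof.
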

\begin{proof}
Recall that $Q=d_{1}+\td_{2}$. Since $[\dnez,\dchz]=[\dchz,\dchz]=0$,
\begin{align*}
Q^{2}=\frac{1}{2}[Q,Q]=\frac{1}{2}[d_{1}+\td_{2},d_{1}+\td_{2}]=\frac{1}{2}[d_{1},d_{1}]=d_{1}^{2}=0.
\end{align*}
Therefore $(E_{1},Q)$ is a complex with respect to the charge degrees. The filtration on $E_{1}$ induced by the weight filtration on $C_{k}$ gives a convergent spectral sequence $\{\tE_{n}\}$ such that
\begin{align*}
E_{n}\simeq \tE_{n}
\end{align*}
for all $n\geq1$. Therefore
\begin{align*}
H(C_{k},d_{(0)})\simeq E_{\infty}\simeq\tE_{\infty}\simeq H(E_{1},Q).
\end{align*}
This completes the proof.
\end{proof}

\begin{proof}[Proof of Theorem \ref{main1 thm}]
{\upshape Recall
\begin{align*}
\W^{k}(\g,f;\Gamma)=H(C_{k},d_{(0)})=H^{0}(C_{k},d_{(0)}).
\end{align*}
By Lemma \ref{E1Q lem},
\begin{align}\label{mainprf eq}
\W^{k}(\g,f;\Gamma)\simeq H^{0}(E_{1},Q)=\Ker\ Q.
\end{align}
Notice that
\begin{align*}
Q=\sum_{[\beta]\in[\Pp]}Q_{[\beta]}
\end{align*}
by definition in \eqref{Screening eq1} and \eqref{Screening eq2}, and that
\begin{align*}
Q_{[\beta]}\cdot(\Vtau\otimes\Fne)\subset M_{[\beta]}\otimes\Fne
\end{align*}
for $[\beta]\in[\Pp]$.  This implies the decomposition of the kernel of the operator $Q$:
\begin{align*}
\Ker\ Q=\bigcap_{[\beta]\in[\Pp]}\Ker\ Q_{[\beta]}.
\end{align*}
The isomorphism \eqref{mainprf eq} is the vertex superalgebra isomorphism (Lemma \ref{Miura lem} below). Therefore this completes the proof.
}
\end{proof}

\subsection{Proof of Lemma \ref{Cinf lemma}}\label{Cinf subsec}
Define the $\g_{+}[t]$ action on the affine superspace $Conn=\der+\g_{\geq0}[t]$ by
\begin{align*}
a(t)\cdot(\der+b(t))=\der+[a(t),b(t)]-\der a(t)
\end{align*}
for $a(t)\in\g_{+}[t]$ and $b(t)\in\g_{\geq0}[t]$. This action is induced by the derivation of the gauge action of the loop supergroup associated with $\g_{+}[t]$. This $\g_{+}[t]$ action on $Conn$ gives rise to the $\g_{+}[t]$-module structure on the space of the functions of $Conn$. Identify the space of the functions of $Conn$ with the supersymmetric algebra of $\g_{\leq0}[t^{-1}]t^{-1}$, denoted by $\JSn$, via the {\it residue dual}, that is, a non-degenerate bilinear form $\langle\hspace{0.5mm}\cdot\hspace{0.5mm}|\hspace{0.5mm}\cdot\hspace{0.5mm}\rangle_{\res}:Conn\times\g_{\leq0}\rightarrow\C$ defined by the following formula
\begin{align*}
\langle\der+a\otimes f(t)\mid b\otimes g(t)\rangle_{\res}=(a|b)\ \underset{t=0}{\mathrm{Res}}\ f(t)g(t)\ dt
\end{align*}
for $a\in\g_{\geq0}$, $b\in\g_{\leq0}$, $f(t)\in\C[t]$ and $g(t)\in\C[t^{-1}]t^{-1}$. Then $\JSn$ is a $\g_{+}[t]$-module. Set
\begin{align*}
u\otimes t^{n}=J^{u}(n)\in\JSn
\end{align*}
for $u\in\g_{\leq0}$, $n\in\Z_{<0}$ and
\begin{align*}
a\otimes t^{m}=a_{(m)}\in\g_{+}[t]
\end{align*}
for $a\in\g_{+}$, $m\in\Z_{\geq0}$. Here, $\g_{+}[t]$ acts on $\JSn$ by
\begin{align*}
{e_{\alpha}}_{(m)}\cdot J^{u}(-n)=-\sum_{\beta\in I\cup\Delta_{\leq0}}(-1)^{p(u)\palpha}c_{u,\alpha}^{\beta}J^{\ebeta}(-n+m)+n(e_{\alpha}|u)\delta_{m,n}
\end{align*}
for $\alpha\in\Dp$, $u\in\g_{\leq0}$, $m\geq0$ and $n>0$, and ${e_{\alpha}}_{(m)}$ acts on $\JSn$ as a super derivation with a parity $\palpha$. Consider the semi-infinite cohomology (cf. \cite{F,Fu}) of the $\g_{+}[t]$-module $\JSn$, denoted by
\begin{align}
\label{semi eq}
H(\g_{+}[t],\JSn).
\end{align}
The complex of \eqref{semi eq} is 
\begin{align*}
\JSn\otimes\Lambda((\g_{+}[t])^{*}),
\end{align*}
where $\Lambda((\g_{+}[t])^{*})$ is the superexterior algebra of $(\g_{+}[t])^{*}$. Let $\varphi^{\alpha}(-n)$ be a generator of $\Lambda((\g_{+}[t])^{*})$ associated with the element $(\ealpha\otimes t^{n})^{*}\in(\g_{+}[t])^{*}$ for $\alpha\in\Dp$ and $n\in\Z_{\geq0}$.

\begin{lemma}
$H(\g_{+},\JSn)=H(\pC_{\infty},\dstz)$.
\end{lemma}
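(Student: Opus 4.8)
The plan is to write down an explicit isomorphism of cochain complexes between $(\pC_{\infty},\dstz)$ and the (semi-infinite) Chevalley--Eilenberg complex $\JSn\otimes\Lambda((\g_{+}[t])^{*})$ that computes the semi-infinite cohomology \eqref{semi eq}, and then read off the assertion on cohomology. Everything needed is already assembled: by Lemma \ref{qcal lemma} at $\p=0$, $\pC_{\infty}$ is the free supercommutative vertex superalgebra generated by the fields $\bJ^{u}(z)$ ($u\in\g_{\leq0}$) and $\varphi^{\alpha}(z)$ ($\alpha\in\Dp$), so as a super vector space it is the free supercommutative algebra on $\{\bJ^{u}_{(-n)}\mid u\in\g_{\leq0},\,n\geq1\}$ together with the fermionic creation modes of the $\varphi^{\alpha}(z)$, exactly as $\JSn\otimes\Lambda((\g_{+}[t])^{*})$ is the free supercommutative algebra on $\{J^{u}(-n)\}$ and $\{\varphi^{\alpha}(-n)\}$.

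First I would define $\Theta\colon\pC_{\infty}\to\JSn\otimes\Lambda((\g_{+}[t])^{*})$ on generators by $\bJ^{u}_{(-n)}\mapsto J^{u}(-n)$ and by sending the fermionic creation modes of $\varphi^{\alpha}(z)$ bijectively onto the generators $\varphi^{\alpha}(-n)$ of $\Lambda((\g_{+}[t])^{*})$ (compatibly with $\der$), extending it as a homomorphism of supercommutative superalgebras; the parities match on the nose because the ghosts carry the reversed parity on both sides. By the free-field description of $\pC_{\infty}$ this $\Theta$ is a bijection, it restricts to the two embeddings into $\pC_{\infty}$ recorded just before the statement, and it is homogeneous for the ghost-number grading. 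Next I would check that $\Theta$ intertwines the differentials; since $A_{(0)}$ commutes with $\der$ and is an odd derivation of the normally-ordered product for any $A$, the operator $\dstz$ is determined by its values on $\bJ^{u}$ and $\varphi^{\alpha}$, so it suffices to compare on these. On a ghost, Lemma \ref{qcal lemma} gives $\dstz\varphi^{\alpha}=-\tfrac{1}{2}\sum_{\beta,\gamma\in\Dp}(-1)^{\palpha\pbeta}c_{\beta,\gamma}^{\alpha}:\varphi^{\beta}\varphi^{\gamma}:$, which is precisely the transpose of the Lie bracket of $\g_{+}[t]$, i.e. the exterior part of the Chevalley--Eilenberg differential. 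On a current, $\dstz\bJ^{u}=-\sum_{\gamma\in I\cup\Delta_{\leq0},\,\beta\in\Dp}(-1)^{\pgamma}c_{u,\beta}^{\gamma}:\bJ^{e_{\gamma}}\varphi^{\beta}:+\sum_{\beta\in\Dp}(u|e_{\beta})\,\der\varphi^{\beta}$: the first summand reproduces, mode by mode, the non-central part $\varphi^{\beta}(-m)\otimes({e_{\beta}}_{(m)}\cdot J^{u}(-n))$ of the coadjoint action, while the second summand, since $\der$ shifts the mode label of $\varphi^{\beta}$ by one with a numerical factor, produces exactly the central term $n(e_{\beta}|u)\delta_{m,n}$ that was built into the $\g_{+}[t]$-action on $\JSn$ via the residue dual. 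Thus $\Theta\circ\dstz$ equals the semi-infinite differential composed with $\Theta$; both raise ghost number by one, so $\Theta$ is an isomorphism of cochain complexes and the lemma follows upon taking cohomology.

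The only genuine work sits in the second step, and it is bookkeeping rather than ideas: one must fix the Hamiltonian (conformal-weight) shift so as to know which modes of $\varphi^{\alpha}$ and $\bJ^{u}$ actually occur in $\pC_{\infty}$ and that they are matched with the right basis vectors of $\JSn$ and $(\g_{+}[t])^{*}$, track the Koszul signs produced by the parities $\palpha$ when $\bJ^{e_{\gamma}}$ and $\varphi^{\beta}$ are moved past one another relative to the chosen normalisation $d(\xi)=\sum x_{i}^{*}\wedge(x_{i}\cdot\xi)$ of the Chevalley--Eilenberg differential, and pin down the index shift in the residue-dual pairing precisely enough to land on $\delta_{m,n}$ and not a neighbouring mode. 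Conceptually there is nothing beyond the observation that $\dstz$ is, tautologically, the classical Drinfeld--Sokolov gauge differential and that $\JSn$ is the coordinate ring of $Conn=\der+\g_{\geq0}[t]$, so the two complexes coincide once both are spelled out.
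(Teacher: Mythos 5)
Your proposal is correct and follows essentially the same route as the paper: one identifies $\pC_{\infty}$ with $\JSn\otimes\Lambda((\g_{+}[t])^{*})$ via the obvious generator-by-generator superalgebra isomorphism ($\bJ^{u}_{(-n)}\mapsto J^{u}(-n)$, $\varphi^{\alpha}_{-m}\mapsto\varphi^{\alpha}(-m)$) and checks by direct computation on the generators, using the $\p=0$ specialization of Lemma \ref{qcal lemma}, that $\dstz$ goes over to the Chevalley differential of the $\g_{+}[t]$-module $\JSn$ built from the residue-dual action. Your intertwining check on $\bJ^{u}$ and $\varphi^{\alpha}$, including the matching of the $(u|e_{\beta})\der\varphi^{\beta}$ term with the central term $n(e_{\beta}|u)\delta_{m,n}$, is exactly the ``direct calculations'' the paper invokes.
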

\begin{proof}
The map $\pC_{\infty}\rightarrow\JSn\otimes\Lambda((\g_{+}[t])^{*})$ defined by
\begin{align*}
\ket{0}\mapsto 1,\quad
\bJ^{u}_{(-n)}\mapsto J^{u}(-n),\quad
\varphi^{\alpha}_{-m}\mapsto\varphi^{\alpha}(-m)
\end{align*}
for $u\in\g_{\leq0}$, $\alpha\in\Dp$, $n\geq1$, $m\geq0$ is a superalgebra isomorphism by construction. The derivation on $\pC_{\infty}$ coincides with the derivation on $\JSn\otimes\Lambda((\g_{+}[t])^{*})$ via this map by direct calculations. Therefore the assertion follows.
\end{proof}

\begin{proof}[Proof of Lemma \ref{Cinf lemma}]
Denote by
\begin{align*}
\Cinf=\JSn\otimes\Lambda((\g_{+}[t])^{*})
\end{align*}
the complex of \eqref{semi eq} and by $d_{\Cinf}$ the derivation of $\Cinf$. Let $F_{p}\Cinf$ be a subspace of $\Cinf$ spanned by all the elements
\begin{align*}
J^{u_{1}}(-n_{1})\cdots J^{u_{r}}(-n_{r})\otimes\varphi^{\beta_{1}}(-m_{1})\cdots\varphi^{\beta_{s}}(-m_{s})
\end{align*}
with $u_{i}\in\g_{\leq0}$, $\beta_{i}\in\Dp$, $n_{i}>0$, $m_{i}\geq0$ and $r+s\geq p$. Since 
\begin{align*}
d_{\Cinf}F_{p}\Cinf\subset F_{p}\Cinf,
\end{align*}
$F_{p}\Cinf$ is a filtration of the complex $\Cinf$ and gives the spectral sequence $\{E_{\Cinf,n}\}_{n\geq0}$ on $\Cinf$. The conformal weight on $\Cinf$ is defined in the same way as $\pC_{k}$.  Since each of homogeneous conformal weight spaces is finite dimensional, the spectral sequence restricted to each of homogeneous conformal weight spaces converges so that $\{E_{\Cinf,n}\}_{n\geq0}$ also converges. Set
\begin{align*}
E_{\Cinf,n+1}=H(E_{\Cinf,n},d_{\Cinf,n})
\end{align*}
for $n\geq0$. We have
\begin{align}
\label{Cinf eq1}&d_{\Cinf,0}\cdot J^{e_{-\alpha}}(-n)=n(e_{-\alpha}|\ealpha)\varphi^{\alpha}(-n),\\
\label{Cinf eq2}&d_{\Cinf,0}\cdot J^{u}(-n)=d_{\Cinf,0}\varphi^{\alpha}(-m)=0
\end{align}
for $\alpha\in\Dp$, $u\in\g_{0}$, $n>0$ and $m\geq0$. For $\alpha\in\Dp$ and $n\geq1$, denote by $\Cinf_{\alpha,n}$ a subalgebra of $\Cinf$ generated by $J^{e_{-\alpha}}(-n)$ and $\varphi^{\alpha}(-n)$. Let $\Cinf_{0}$ be a subalgebra generated by $\varphi^{\alpha}(0)$ for all $\alpha\in\Dp$. Then
\begin{align*}
\Cinf=\JS\otimes\Cinf_{0}\otimes\bigotimes_{\begin{subarray}{c}\alpha\in\Dp\\ n\geq1\end{subarray}}\Cinf_{\alpha,n}
\end{align*}
is a decomposition of the complex $\Cinf$ by \eqref{Cinf eq1} \eqref{Cinf eq2}. Therefore
\begin{align*}
H(\Cinf,d_{\Cinf,0})=\JS\otimes\Cinf_{0}\otimes\bigotimes_{\begin{subarray}{c}\alpha\in\Dp\\ n\geq1\end{subarray}}H(\Cinf_{\alpha,n},d_{\Cinf,0})
\end{align*}
since $d_{\Cinf,0}\cdot\JS=d_{\Cinf,0}\cdot\Cinf_{0}=0$. We show that it is sufficient to prove that
\begin{align}
\label{semi0 eq}
H(\Cinf_{\alpha,n},d_{\Cinf,0})=\C
\end{align}
for all $\alpha\in\Dp$ and $n\geq1$. Then
\begin{align*}
E_{\Cinf,1}=H(\Cinf,d_{\Cinf,0})=\JS\otimes\Cinf_{0}.
\end{align*}
Thus,
\begin{align*}
E_{\Cinf,2}=H(E_{\Cinf,1},d_{\Cinf,1})=\JS\otimes H(\Cinf_{0},d_{\Cinf,1})
\end{align*}
since $d_{\Cinf,1}\cdot\JS=0$. We have
\begin{align*}
d_{\Cinf,1}\cdot\varphi^{\alpha}(0)=-\frac{1}{2}\sum_{\beta,\gamma\in\Dp}(-1)^{\palpha\pbeta}c_{\alpha,\beta}^{\gamma}\varphi^{\beta}(0)\varphi^{\gamma}(0)
\end{align*}
for $\alpha\in\Dp$. This implies
\begin{align*}
H(\Cinf_{0},d_{\Cinf,1})=H(\g_{+},\C).
\end{align*}
Since $d_{\Cinf,r}=0$ for $r\geq2$, the assertion follows. Therefore we only need to show \eqref{semi0 eq} for all $\alpha\in\Dp$ and $n\geq1$. If $\palpha$ is even, $J^{e_{-\alpha}}(-n)$ is even and $\varphi^{\alpha}(-n)$ is odd. Therefore $\Cinf_{\alpha,n}$ is spanned by all the elements
\begin{align*}
(J^{e_{-\alpha}}(-n))^{r},\quad
(J^{e_{-\alpha}}(-n))^{r}\varphi^{\alpha}(-n)
\end{align*}
with $r\geq0$. We have
\begin{align*}
&d_{\Cinf,0}\cdot (J^{e_{-\alpha}}(-n))^{r}=r n(e_{-\alpha}|e_{\alpha})(J^{-\alpha}(-n))^{r-1}\varphi^{\alpha}(-n),\\
&d_{\Cinf,0}\cdot(J^{e_{-\alpha}}(-n))^{r}\varphi^{\alpha}(-n)=0
\end{align*}
by \eqref{Cinf eq1} and \eqref{Cinf eq2}. Hence \eqref{semi0 eq} follows. If $\palpha$ is odd, $J^{e_{-\alpha}}(-n)$ is odd and $\varphi^{\alpha}(-n)$ is even. Therefore $\Cinf_{\alpha,n}$ is spanned by all the elements
\begin{align*}
(\varphi^{\alpha}(-n))^{r},\quad
J^{e_{-\alpha}}(-n)(\varphi^{\alpha}(-n))^{r}
\end{align*}
with $r\geq0$. We have
\begin{align*}
&d_{\Cinf,0}\cdot J^{e_{-\alpha}}(-n)(\varphi^{\alpha}(-n))^{r}=n(e_{-\alpha}|e_{\alpha})(\varphi^{\alpha}(-n))^{r+1},\\
&d_{\Cinf,0}\cdot(\varphi^{\alpha}(-n))^{r}=0
\end{align*}
by \eqref{Cinf eq1} and \eqref{Cinf eq2}. Hence \eqref{semi0 eq} follows. Therefore this completes the proof.
\end{proof}

\section{Non-generic Level}\label{Miura sec}

In this section we consider the $\W$-algebra $\W^{k}(\g,f;\Gamma)$ at non-generic level. Recall that, following \cite{KW3}, there exists an ascending vertex superalgebra filtration on $C_{k}$ such that the induced filtration on $H(C_{k},d_{(0)})$ gives an isomorphism
\begin{align*}
\mathrm{gr}H(C_{k},d_{(0)})\simeq V^{\tau_{k}}(\g^{f}),
\end{align*}
where $\g^{f}\subset\g_{\leq0}$ is the centralizer of $f$ in $\g$. Hence if
\begin{align*}
\dim\g^{f}=n
\end{align*}
and $\{u_{1},\ldots,u_{n}\}$ is a basis of $\g^{f}$ such that
\begin{align*}
u_{i}\in\g_{-j_{i}}
\end{align*}
for all $i$, where all $j_{i}$'s are some non-negative half-integers, then the $\W$-algebra $\W^{k}(\g,f;\Gamma)$ is strongly generated by
\begin{align*}
W^{u_{i}}(z)=J^{u_{i}}(z)+(\mathrm{lower}\ \mathrm{terms})
\end{align*}
for all $i$ with conformal weights
\begin{align*}
\conf(W^{u_{i}})=j_{i}+1
\end{align*}
since the differential $d_{(0)}$ preserves the conformal weights on $C_{k}$. Thus, $\W^{k}(\g,f;\Gamma)$ is realized as a vertex subalgebra of $V^{\tau_{k}}(\g_{\leq0})\otimes \Fne$ generated by $W^{u_{i}}(z)$ for all $i$. Let
\begin{align*}
\tmu:V^{\tau_{k}}(\g_{\leq0})\otimes \Fne\rightarrow\Vtau\otimes\Fne
\end{align*}
be the vertex superalgebra homomorphism defined by $\tmu(J^{u}_{(n)})=0$ for all $u\in\g_{<0}$ and $n<0$, and $\tmu=\Id$ on $\Vtau\otimes\Fne$. This map $\widetilde{\mu}$ induces the vertex superalgebra homomorphism
\begin{align*}
\mu:\W^{k}(\g,f;\Gamma)\rightarrow V^{\tau_{k}}(\g_{0})\otimes \Fne
\end{align*}
by restriction, where we regard the $\W$-algebra $\W^{k}(\g,f;\Gamma)$ as a vertex subalgebra of $\Vtaun\otimes\Fne$. This map $\mu$ is called {\it the Miura map}.

\begin{lemma}\label{Miura lem}
The image of the isomorphism \eqref{main1 eq} in Theorem \ref{main1 thm} coincides with that of the Miura map $\mu$.
\end{lemma}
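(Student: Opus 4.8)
The plan is to trace through where the isomorphism \eqref{main1 eq} comes from and check that at each stage the identification is compatible with the projection maps defining $\mu$. Recall that \eqref{main1 eq} was obtained as the composite
\begin{align*}
\W^{k}(\g,f;\Gamma)=H(C_{k},d_{(0)})\simeq E_{\infty}\simeq\widetilde{E}_{\infty}\simeq H^{0}(E_{1},Q)=\Ker Q\subset\Vtau\otimes\Fne,
\end{align*}
where the weight filtration $F_{\bullet}C_{k}$ provides the spectral sequence. The first step is to observe that the weight filtration is a filtration by vertex superalgebra ideals (indeed $F_{0}C_{k}=C_{k}$ and each $F_{p}C_{k}$ is spanned by normally ordered monomials of degree $\geq p$), so the associated graded and all pages of the spectral sequence carry vertex superalgebra structures, and the isomorphism $\W^{k}(\g,f;\Gamma)\simeq\Ker Q$ is an isomorphism of vertex superalgebras onto its image, as already asserted in the proof of Theorem \ref{main1 thm}.

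The key point is to understand concretely which subspace of $\Vtau\otimes\Fne$ this image is. An element of $\Ker Q$ arises from a $d_{(0)}$-cocycle $W\in C_{k}$ of charge $0$; since $d_{(0)}$ preserves conformal weight and $H^{0}(C_{k})$ is strongly generated by the $W^{u_{i}}(z)=J^{u_{i}}(z)+(\text{lower terms})$ with $u_{i}\in\g^{f}$, we may take $W$ to be a normally ordered polynomial in the $J^{u}(z)$ ($u\in\g_{\leq0}$), the $\Phi_{\alpha}(z)$, and the $\varphi^{\alpha}(z)$. The passage to $E_{1}=H(\pC_{k},\dstz)\otimes\Fne$ kills the $\varphi^{\alpha}$'s against the $J^{e_{-\alpha}}$'s (Lemma \ref{dstPp lemma}, Lemma \ref{Salpha lemma}), and then the further passage to $\widetilde{E}_{\infty}\subset\Vtau\otimes\Fne$ amounts, on representatives, to discarding all monomials involving $J^{u}(z)$ with $u\in\g_{<0}$. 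That is precisely the recipe defining $\widetilde\mu$, hence $\mu$. So the plan is: pick a lift $W$ of a given generator of $\W^{k}(\g,f;\Gamma)$ inside $\Vtaun\otimes\Fne$ (using the strong generation by $W^{u_i}$), and show that its image under \eqref{main1 eq} equals $\widetilde\mu(W)$ by checking the two reductions $C_k\rightsquigarrow E_1$ and $E_1\rightsquigarrow\widetilde E_\infty$ both act on such a lift by setting the $\g_{<0}$- and $\varphi$-components to zero modulo lower-filtration and coboundary terms.

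The main obstacle is bookkeeping: one must verify that the spectral-sequence identifications are genuinely implemented by the naive "drop the $J^{u}$, $u\in\g_{<0}$, and $\varphi^{\alpha}$ terms" operation, rather than only up to filtration-lower corrections that could in principle involve $\Vtau$-components. Concretely, for $u\in\g^{f}\cap\g_{\leq 0}$ one has $\widetilde\mu(W^{u_i})=\widetilde\mu(J^{u_i})+\widetilde\mu(\text{lower terms})$, and one must check that the "lower terms", which a priori lie in $\Vtaun\otimes\Fne$ and could have nontrivial image in $\Vtau\otimes\Fne$, are matched on both sides; this follows because both $\mu$ and the spectral-sequence edge map are vertex superalgebra homomorphisms agreeing on the generating fields $J^{u}$ ($u\in\g_0$), $\Phi_\alpha$, so they agree everywhere once we know the image of each strong generator, and for that it suffices to compare conformal weights and leading terms. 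Having established the image-level identification, the conclusion is immediate: the vertex subalgebra $\Ker Q$ of $\Vtau\otimes\Fne$ is the $\widetilde\mu$-image of the realization of $\W^{k}(\g,f;\Gamma)$ inside $\Vtaun\otimes\Fne$, which is by definition the image of $\mu$.
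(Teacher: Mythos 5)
Your plan follows the same route as the paper: realize $\W^{k}(\g,f;\Gamma)=H^{0}(C_{k},d_{(0)})$ inside $C_{k}^{(0)}=\Vtaun\otimes\Fne$ and argue that the isomorphism \eqref{main1 eq}, induced by the weight filtration, is implemented by the weight-degree-zero projection, i.e.\ by $\tmu$. But the step where you close what you yourself call the main obstacle is not a proof. You propose to deduce the identification from the facts that $\mu$ and the spectral-sequence edge map are vertex superalgebra homomorphisms ``agreeing on the generating fields $J^{u}$ ($u\in\g_{0}$), $\Phi_{\alpha}$''; these fields, however, generate the \emph{target} $\Vtau\otimes\Fne$, not the domain $\W^{k}(\g,f;\Gamma)$, so agreement on them says nothing about agreement on $\W^{k}(\g,f;\Gamma)$, whose strong generators are the $W^{u_{i}}$ --- and computing the edge-map image of the $W^{u_{i}}$ is precisely the issue. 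Likewise, ``comparing conformal weights and leading terms'' cannot pin down that image: two distinct elements of $\Vtau\otimes\Fne$ can share both. (Two smaller points: a charge-zero element of $C_{k}$ contains no $\varphi^{\alpha}$'s at all, by \eqref{Vtaun eq}; and quoting the proof of Theorem \ref{main1 thm} for the vertex-superalgebra nature of \eqref{main1 eq} is circular, since in the paper that assertion is justified by the present lemma.)

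What is missing is the one fact the paper's proof actually rests on: by Proposition \ref{E1 prop} the charge-zero part of $E_{1}$ equals $\Vtau\otimes\Fne$, which is concentrated in weight degree $0$. Since $C_{k}$ has no elements of negative charge, a charge-zero cohomology class is an honest cocycle $W\in\Vtaun\otimes\Fne$ with no coboundary ambiguity, and the charge-zero parts of all later pages are subspaces of $E_{1}^{(0)}$. Concentration in degree $0$ then forces $F_{1}H^{0}(C_{k},d_{(0)})=0$, so the leading (lowest-degree) term of any nonzero class lies in degree $0$, and the identification of $E_{1}^{(0)}$ inside $\mathrm{gr}^{0}C_{k}$ is as the literal degree-zero subspace $\Vtau\otimes\Fne$ (using \eqref{dst0 eq} and the absence of charge $-1$). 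Hence the isomorphism \eqref{main1 eq} is exactly $W\mapsto W_{0}$, the degree-zero component of the cocycle, with no room for the ``filtration-lower corrections'' you worry about; since $W_{0}=\tmu(W)$ by the definition of $\tmu$, the images coincide. Without invoking Proposition \ref{E1 prop} (or an equivalent statement), your argument neither rules out a class whose degree-zero component vanishes nor identifies the edge map with the naive projection, so the proof as written has a gap at its central step.
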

\begin{proof}
Assume that $k$ is generic. By Lemma \ref{E1Q lem},
\begin{align}\label{Miu eq}
\W^{k}(\g,f;\Gamma)=H^{0}(C_{k},d_{(0)})\simeq H^{0}(E_{1},Q),
\end{align}
which is the isomorphism \eqref{main1 eq}. This is the vector isomorphism induced by the weight filtration on $C_{k}$. By Proposition \ref{E1 prop},
\begin{align*}
E_{1}^{(0)}=\Vtau\otimes\Fne.
\end{align*}
This implies that the leading terms of the cohomology classes in $C_{k}^{(0)}$ concentrate on  the homogeneous space with degree $0$. Therefore the isomorphism \eqref{Miu eq} is the projection of the homogeneous terms with degree $0$ in the cohomology classes in $C_{k}^{(0)}$. Hence the image of \eqref{Miu eq} coincides with that of the Miura map by definition. This completes the proof.
\end{proof}

\begin{cor}\label{Miura cor}
If $k$ is generic, the Miura map $\mu$ is injective and there exist strongly generating fields $X^{1}(z),\ldots,X^{n}(z)$ of the vertex subalgebra
\begin{align*}
\mu(\W^{k}(\g,f;\Gamma))\simeq\W^{k}(\g,f;\Gamma)
\end{align*}
of $\Vtau\otimes\Fne$ such that
\begin{align*}
\mu(W^{u_{i}}(z))=X^{i}(z),\quad
\conf(X^{i})=j_{i}+1
\end{align*}
for all $i$.
\end{cor}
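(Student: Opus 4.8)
The plan is to deduce the corollary formally from Theorem \ref{main1 thm} and Lemma \ref{Miura lem}, since the substantive work is already done there. First I would recall, from the discussion preceding Lemma \ref{Miura lem}, that $\W^{k}(\g,f;\Gamma)$ is realized as a vertex subalgebra of $\Vtaun\otimes\Fne$ and is strongly generated by the fields $W^{u_{i}}(z)=J^{u_{i}}(z)+\text{(lower terms)}$, $i=1,\dots,n=\dim\g^{f}$, with $u_{i}\in\g_{-j_{i}}$ and $\conf(W^{u_{i}})=j_{i}+1$; this rests on the isomorphism $\mathrm{gr}\,H(C_{k},d_{(0)})\simeq V^{\tau_{k}}(\g^{f})$. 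I would also note that $\tmu$ sends each generator of $\Vtaun\otimes\Fne$ either to $0$ (the currents $J^{u}(z)$ with $u\in\g_{<0}$) or to a generator of the same conformal weight, so $\tmu$, and hence its restriction $\mu$, preserves the $\frac{1}{2}\Z_{\geq0}$-conformal grading.

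For injectivity I would use Lemma \ref{Miura lem}: its proof shows that, under the identification $\W^{k}(\g,f;\Gamma)=H^{0}(C_{k},d_{(0)})$, the Miura map $\mu$ coincides with the isomorphism \eqref{main1 eq} of Theorem \ref{main1 thm} onto $\bigcap_{[\beta]\in[\Pp]}\Ker\,Q_{[\beta]}$; in particular $\mu$ is injective and $\mu(\W^{k}(\g,f;\Gamma))\simeq\W^{k}(\g,f;\Gamma)$. Equivalently, Theorem \ref{main1 thm} gives $\W^{k}(\g,f;\Gamma)\simeq\bigcap_{[\beta]\in[\Pp]}\Ker\,Q_{[\beta]}$ as graded vertex superalgebras, Lemma \ref{Miura lem} gives $\mu(\W^{k}(\g,f;\Gamma))=\bigcap_{[\beta]\in[\Pp]}\Ker\,Q_{[\beta]}$, and since $\mu$ preserves conformal weights it restricts to a surjection between conformal weight spaces of equal finite dimension, hence is injective.

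Finally I would set $X^{i}(z)=\mu(W^{u_{i}}(z))$ for all $i$. Since $\mu$ is an injective vertex superalgebra homomorphism fixing the vacuum and commuting with $\der$ and with normally ordered products, the images of a strong generating set again strongly generate the image; hence $X^{1}(z),\dots,X^{n}(z)$ strongly generate $\mu(\W^{k}(\g,f;\Gamma))$. As $\mu$ preserves conformal weights and $X^{i}=\mu(W^{u_{i}})\neq0$ by injectivity, $\conf(X^{i})=\conf(W^{u_{i}})=j_{i}+1$. This yields every assertion of the corollary, so I do not anticipate a genuine obstacle; the only point requiring a word of care is the compatibility of $\tmu$ with the conformal grading, which is immediate from its definition on generators since the currents it annihilates lie in every graded component.
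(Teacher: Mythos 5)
Your proposal is correct and follows the paper's own (implicit) route: the corollary is stated as an immediate consequence of Lemma \ref{Miura lem} together with the strong generation of $\W^{k}(\g,f;\Gamma)$ by the $W^{u_{i}}(z)$ and the weight-preservation of $\mu$, which is exactly what you spell out. Your extra dimension-count argument for injectivity is a harmless alternative to simply noting, as the proof of Lemma \ref{Miura lem} does, that $\mu$ coincides with the isomorphism \eqref{main1 eq} on $C_{k}^{(0)}$.
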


Though the following assertion is proved in \cite{A} for regular nilpotent elements $f$, the same proof applies for arbitrary $f$ (see Section 5.9 of \cite{A} for the details).

\begin{lemma}\label{Miura1 lem}
The Miura map $\mu$ is injective for all $k$ if $\g$ is a simple Lie algebra.
\end{lemma}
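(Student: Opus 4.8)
The plan is to follow the argument of \cite{A} (Section~5.9), which for $f$ regular already covers the assertion and reduces injectivity of $\mu$ to a finite-dimensional geometric fact via associated graded. Recall from \cite{KW3} that $C_{k}$ carries an ascending vertex superalgebra filtration for which $\gr\W^{k}(\g,f;\Gamma)\simeq V^{\tau_{k}}(\g^{f})$; as a commutative algebra this is $\C[J_{\infty}\mathcal{S}]$, the coordinate ring of the arc space of the Slodowy slice $\mathcal{S}$ associated with $f$ --- an affine space of dimension $\dim\g^{f}$, hence irreducible, so that $\C[J_{\infty}\mathcal{S}]$ is an integral domain. Putting the compatible filtration on $\Vtau\otimes\Fne$ one has $\gr(\Vtau\otimes\Fne)\simeq\C[J_{\infty}(\g_{0}^{*}\oplus\g_{\frac{1}{2}}^{*})]$, with $\g_{\frac{1}{2}}$ contributing odd coordinates, and $\tmu$, hence the Miura map $\mu$, becomes a filtered homomorphism. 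Since the filtration is finite and exhaustive on each conformal weight space, it suffices to prove that $\gr\mu$ is injective.

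Under these identifications $\gr\mu$ is the unique $\der$-equivariant extension of the comorphism $\varpi^{*}\colon\C[\mathcal{S}]\to\C[\g_{0}^{*}\oplus\g_{\frac{1}{2}}^{*}]$ of a morphism of (super) affine spaces $\varpi\colon\g_{0}^{*}\oplus\g_{\frac{1}{2}}^{*}\to\mathcal{S}$, the classical Miura morphism. Note that $\dim\g_{0}+\dim\g_{\frac{1}{2}}=\dim\g^{f}$ --- which follows by telescoping from the surjectivity of $\ad f$ in degrees $\leq\tfrac{1}{2}$ --- so $\varpi$ is a map between spaces of equal dimension. If $\varpi$ is dominant then, $\mathcal{S}$ being reduced and irreducible, $\varpi^{*}$ is injective; moreover $\varpi$ is then generically étale, so the induced morphism on arc spaces is dominant and, as $\C[J_{\infty}\mathcal{S}]$ is a domain, $\gr\mu$ has trivial kernel. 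This is the structure of the proof in \cite{A}.

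The substantive point --- and the step I expect to be the main obstacle --- is the dominance of $\varpi$. I would derive it from the Drinfeld--Sokolov gauge-fixing picture: $\varpi$ assigns to $j\in\g_{0}\oplus\g_{\frac{1}{2}}$ the canonical (Slodowy-slice) form to which the operator $\der+f+j$ is conjugated by the adjoint action of $\exp(\g_{>0})$, and one shows that this action is free on the affine space of operators in Miura form, using the $\slf_{2}$-theory attached to the good grading of the reductive Lie algebra $\g$. It is precisely here that the hypothesis ``$\g$ a simple Lie algebra'' is essential: the control of the classical limit and of the gauge orbits provided by reductivity, as exploited in \cite{A}, is not available for a general Lie superalgebra, where this argument breaks down. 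This is why the Miura map for the superalgebra cases entering Theorems~\ref{WBn thm} and \ref{W2n thm} is handled by separate arguments.
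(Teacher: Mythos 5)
Your proposal follows the same route the paper itself takes: the paper's ``proof'' is only a pointer to Section 5.9 of \cite{A} (written there for regular $f$, claimed to apply verbatim to arbitrary $f$), with the classical statement of \cite{DKV} mentioned as an alternative, and your outline --- pass to an associated graded, identify $\gr\mu$ with the comorphism of the arc-space map of a finite-dimensional classical Miura morphism $\varpi\colon\g_{0}\oplus\g_{\frac{1}{2}}\rightarrow\mathcal{S}$, deduce injectivity from dominance --- is exactly that strategy, and the reduction steps (equal dimensions, dominant $\Rightarrow$ generically \'etale $\Rightarrow$ arc-space map dominant, $\C[J_{\infty}\mathcal{S}]$ a domain, filtration finite on each conformal weight space) are sound. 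Two inaccuracies should be fixed, though they are not fatal: the ascending filtration of \cite{KW3} gives $\gr\W^{k}(\g,f;\Gamma)\simeq V^{\tau_{k}}(\g^{f})$, which is neither commutative nor $\C[J_{\infty}\mathcal{S}]$ (for non-regular $f$ the algebra $\g^{f}$ is not abelian); the identification with $\C[J_{\infty}\mathcal{S}]$ holds for the decreasing Li filtration and is itself a theorem, proved in \cite{A} from the Kac--Wakimoto strong generation. Also, since $\g$ is a Lie algebra here, $\g_{\frac{1}{2}}$ is purely even, so $\Fne$ contributes even (symplectic boson) coordinates, not odd ones.

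The genuine gap is the step you yourself single out, the dominance of $\varpi$, and the mechanism you propose cannot deliver it. Because $\dim(\g_{0}\oplus\g_{\frac{1}{2}})=\dim\g^{f}=\dim\mathcal{S}$, dominance of $\varpi$ is equivalent to its generic fibre being finite; freeness of the $\exp(\g_{>0})$-action gives no control on these fibres, since by that very freeness the fibre of the action map $\exp(\g_{>0})\times(f+\g_{0}\oplus\g_{\frac{1}{2}})\rightarrow f+\g_{\geq-\frac{1}{2}}$ over a point is identified with a fibre of $\varpi$ --- the suggested argument is circular. What is actually needed is a genericity statement: a point $u\in\g_{0}\oplus\g_{\frac{1}{2}}$ at which the differential is onto, i.e.\ $\g_{0}\oplus\g_{\frac{1}{2}}+[\g_{\geq\frac{1}{2}},f+u]=\g_{\geq-\frac{1}{2}}$ (for regular $f$ one may take $u\in\h$ regular semisimple; for general $f$ this is precisely the content to be supplied), or else an appeal to the classical result of \cite{DKV}, which is the alternative route the paper records. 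Finally, your closing diagnosis is off on one point of fact: Theorem \ref{W2n thm} concerns $\slf_{n}$, a Lie algebra, and its proof does use this lemma; only the $\osp(1,2n)$ case of Theorem \ref{WBn thm} is a superalgebra case, and there the paper avoids the lemma by the $\C[\X]$-family argument of Lemmas \ref{Miura2 lem} and \ref{Miura3 lem}.
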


We note that the analogue of Lemma \ref{Miura1 lem} for the classical $\W$-algebras is proved in \cite{DKV} and this can be used to give an yet another proof of Lemma \ref{Miura1 lem}.

 Recall that $\hC_{\X}$ is a vertex superalgebra over $\C[\X]$ generated by $\bJ^{u}(z),\varphi^{\alpha}(z),\bPhi_{\beta}(z)$ for all $u\in\g_{\leq0},\alpha\in\Dp,\beta\in\Dn$ and also a chain complex over $\C[\X]$ with the differential $d_{(0)}$ (see Section \ref{classical subsec} for definitions). Let
\begin{align*}
\Fne_{\X},\quad
V^{\tau_{\X}}(\g_{\leq0}),
\quad
V^{\tau_{\X}}(\g_{0})
\end{align*}
be the vertex subalgebras of $\hC_{\X}$ over $\C[\X]$ generated by
\begin{align*}
\bPhi_{\alpha}(z),\quad
\bJ^{u}(z),\quad
\bJ^{v}(z).
\end{align*}
for all $\alpha\in\Dn, u\in\g_{\leq0}, v\in\g_{0}$. By Kac and Wakimoto arguments in the case of $C_{k}$, one finds that the cohomology $H(\hC_{\X},d_{(0)})$ is a free $\C[\X]$-module vertex superalgebra generated by
\begin{align*}
W^{u_{i}}_{\X}(z)=\bJ^{u_{i}}(z)+(\mathrm{lower}\ \mathrm{terms})
\end{align*}
for all $i$ in $V^{\tau_{\X}}(\g_{\leq0})\otimes\Fne_{\X}$ and we call it {\it the $\W$-algebra over $\C[\X]$} denoted by
\begin{align*}
\W^{\X}(\g,f;\Gamma)=H(\hC_{\X},d_{(0)}).
\end{align*}
By construction, if $k+h^{\vee}\neq0$,
\begin{align*}
\W^{k}(\g,f;\Gamma)=\W^{\X}(\g,f;\Gamma)\underset{\C[\X]}{\otimes}\C_{\p},
\end{align*}
where $\C_{\p}$ is a one-dimensional $\C[\X]$-module defined by $\X=\p$ and $\p=(k+h^{\vee})^{-1}$. We can also define the Miura map
\begin{align*}
\mu_{\X}:\W^{\X}(\g,f)\rightarrow V^{\tau_{\X}}(\g_{0})\otimes\Fne_{\X}
\end{align*}
in the same way and we have
\begin{align}\label{Miura eq1}
\mu=\mu_{\X}\underset{\C[\X]}{\otimes}\mathrm{ev}_{\X=\p}
\end{align}
for all $k\neq-h^{\vee}$, where $\mathrm{ev}_{\X=\p}$ is the evaluation map defined by $\X\mapsto\p$.

\begin{lemma}\label{Miura2 lem}
The Miura map $\mu_{\X}$ is injective.
\end{lemma}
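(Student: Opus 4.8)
The plan is to deduce the injectivity of $\mu_{\X}$ from the injectivity of the Miura map $\mu$ at generic levels, established in Corollary \ref{Miura cor}, together with the torsion-freeness of the relevant $\C[\X]$-modules. First I would observe that, by construction, $\W^{\X}(\g,f;\Gamma)$ is a free $\C[\X]$-module (being a free $\C[\X]$-module vertex superalgebra generated by the $W^{u_{i}}_{\X}(z)$), and likewise $V^{\tau_{\X}}(\g_{0})\otimes\Fne_{\X}$ is a free, hence torsion-free, $\C[\X]$-module. The map $\mu_{\X}$ is a $\C[\X]$-module homomorphism between these, so its kernel $N=\Ker\mu_{\X}$ is a $\C[\X]$-submodule of a free module, and in particular torsion-free.

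The key step is then a specialization argument at generic $\p$. Since each homogeneous conformal weight space $\W^{\X}(\g,f;\Gamma)(n)$ is a finitely generated free $\C[\X]$-module and $\mu_{\X}$ preserves conformal weight, it suffices to show that $N(n)=N\cap\W^{\X}(\g,f;\Gamma)(n)=0$ for each $n$. By \eqref{Miura eq1} we have $\mu=\mu_{\X}\otimes_{\C[\X]}\mathrm{ev}_{\X=\p}$ for $\p=(k+h^{\vee})^{-1}$, so for generic $k$ the specialization of $\mu_{\X}$ at $\p$ is the injective map $\mu$; hence $N(n)\otimes_{\C[\X]}\C_{\p}$ maps to zero inside the injective image, which forces $N(n)$ to be supported on the (finite) complement of the generic locus inside $\mathrm{Spec}\,\C[\X]$. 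A finitely generated $\C[\X]$-module that is both torsion-free and supported on a proper closed subset must vanish, so $N(n)=0$ for all $n$, and therefore $N=0$.

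I would take a little care with the translation between ``generic $k$'' (Definition \ref{generic def}) and ``generic $\p$'': the set $S$ of generic levels is Zariski dense in $\C$, so the corresponding set of values $\p=(k+h^{\vee})^{-1}$ is dense in $\C$ (away from $0$), in particular its complement is not Zariski closed of dimension $0$ in the naive sense, but it does contain a nonempty Zariski open subset $\bigcap_n U(n)$ as in the proof that $S$ is dense; restricting to that open subset, the support of $N(n)$ is contained in a proper closed subset of the affine line, which is all that is needed. The main obstacle, such as it is, is bookkeeping: ensuring that the conformal weight decomposition of $\W^{\X}(\g,f;\Gamma)$ is by finitely generated free $\C[\X]$-modules (so that ``torsion-free plus proper support implies zero'' applies weight-by-weight) and that $\mu_{\X}$ genuinely respects this decomposition, both of which follow from the fact that $d_{(0)}$ preserves conformal weights and that $\mu_{\X}$ is built from $\tmu_{\X}$ by the same recipe as $\mu$ from $\tmu$.
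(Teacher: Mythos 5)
Your route is genuinely different from the paper's. The paper never specializes: it re-runs the proof of Theorem \ref{main1 thm} over $\C[\X]$, extending the weight filtration to $\hC_{\X}$ with $\deg\X=0$, using Corollary \ref{CX cor} to compute the first page of the resulting spectral sequence, and concluding that $\W^{\X}(\g,f;\Gamma)$ is realized inside $V^{\tau_{\X}}(\g_{0})\otimes\Fne_{\X}$ as the intersection of kernels of the screening operators $Q_{[\beta],\X}$ (the isomorphism \eqref{WX eq}); since this realization is exactly the Miura map, as in Lemma \ref{Miura lem}, injectivity of $\mu_{\X}$ is immediate, and the paper gets the screening description over $\C[\X]$ as a byproduct, which your argument does not produce. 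Your plan instead deduces the statement from Corollary \ref{Miura cor}, the identity \eqref{Miura eq1}, and $\C[\X]$-module finiteness weight by weight; this is legitimate in principle, since nothing in Theorem \ref{main1 thm}, Lemma \ref{Miura lem} or Corollary \ref{Miura cor} depends on Lemma \ref{Miura2 lem}.

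However, the middle step as you phrase it does not work. From $\mu_{\X}(x)=0$ for $x\in N(n)$ and injectivity of $\mu$ at a generic specialization $\p$, what \eqref{Miura eq1} gives is that the image of $x$ in $\W^{\X}(\g,f;\Gamma)(n)\otimes_{\C[\X]}\C_{\p}$ vanishes, i.e. $x\in(\X-\p)\,\W^{\X}(\g,f;\Gamma)(n)$; it does \emph{not} give that the fiber $N(n)\otimes_{\C[\X]}\C_{\p}$ vanishes, so you cannot conclude that $\p$ lies outside $\mathrm{Supp}\,N(n)$. (Compare $N=(\X-\p)\C[\X]\subset\C[\X]$: the fiber of $N$ at $\p$ is nonzero even though it maps to zero in the fiber of the ambient module, and $\mathrm{Supp}\,N$ is the whole line.) Hence the ``torsion-free plus proper support implies zero'' mechanism is not available in the form you invoke. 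Moreover $\bigcap_{n}U(n)$ is a countable intersection of Zariski open sets and need not itself be Zariski open; the complement of the generic locus is only known to be countable, not finite, so the generic locus is not known to contain a nonempty Zariski open set. Both defects are repaired at once by arguing directly: the set of admissible specialization points $\p=(k+h^{\vee})^{-1}$ with $k$ generic is infinite, each conformal weight space $\W^{\X}(\g,f;\Gamma)(n)$ is a finitely generated free $\C[\X]$-module (it is a $\C[\X]$-submodule of the finite-rank free weight space of $V^{\tau_{\X}}(\g_{\leq0})\otimes\Fne_{\X}$, and $\C[\X]$ is a PID), and writing $x=\sum_{i}f_{i}(\X)w_{i}$ in a $\C[\X]$-basis, the containments $x\in(\X-\p)\,\W^{\X}(\g,f;\Gamma)(n)$ for infinitely many distinct $\p$ force every $f_{i}$ to vanish, so $N(n)=0$ for all $n$ and $\mu_{\X}$ is injective. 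With that replacement your argument closes.
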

\begin{proof}
 We extend the weight filtration on $C_{k}$ to $\hC_{\X}$ by $\deg\X=0$. This filtration gives the convergent spectral sequence $\widehat{E}_{n}$ on $\hC_{\X}$. By construction,
\begin{align*}
\widehat{E}_{1}=H(\hpC_{\X},{\dst}_{(0)})\otimes\Fne_{\X}.
\end{align*}
By Corollary \ref{CX cor},
\begin{align*}
H(\hpC_{\X},{\dst}_{(0)})= V^{\tau_{\X}}(\g_{0})\otimes H(\g_{+},\C[\X]).
\end{align*}
Therefore, in the same way as Theorem \ref{main1 thm}, one finds that the $\W$-algebra $\W^{\X}(\g,f;\Gamma)$ over $\C[\X]$ is isomorphic to the vertex subalgebra over $\C[\X]$ of $V^{\tau_{\X}}(\g_{0})\otimes\Fne_{\X}$, which is the intersection of kernels of the screening operators:
\begin{align}
\label{WX eq}
 \W^{\X}(\g,f;\Gamma)\simeq\bigcap_{[\beta]\in[\Pp]}\Ker Q_{[\beta],\X},
\end{align}
where
\begin{align*}
&Q_{[\beta],\X}=\sum_{\alpha\in[\beta]}\int:S^{\alpha}_{\X}(z)\bPhi_{\alpha}(z):dz\quad\mathrm{if}\ [\beta]\in[\Pp_{\frac{1}{2}}],\\
&Q_{[\beta],\X}=\sum_{\alpha\in[\beta]}\chi(e_{\alpha})\int S^{\alpha}_{\X}(z)dz\hspace{8.5mm}\mathrm{if}\ [\beta]\in[\Pp_{1}],
\end{align*}
and $S^{\alpha}_{\X}(z)$ is defined in Proposition \ref{Screening prop} by replacing $\p=(k+h^{\vee})^{-1}$ by $\X$. This implies that the Miura map $\mu_{\X}$ is injective.
\end{proof}

The following lemma is useful in applications.

\begin{lemma}\label{Miura3 lem}
Let
\begin{align*}
X^{i}_{\X}=\mu_{\X}(W^{u_{i}}_{\X})
\end{align*}
for all $i$. Then
\begin{align*}
X^{i}_{\X}|_{\X=\p}=\p\cdot\mu(W^{u_{i}})
\end{align*}
for all $k\neq-h^{\vee}$.
\end{lemma}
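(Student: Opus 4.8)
The plan is to deduce the lemma from the identification $\hC_{\X}\otimes_{\C[\X]}\C_{\p}\simeq C_{k}$ (valid for $\p=(k+h^{\vee})^{-1}$) together with the compatibility \eqref{Miura eq1} of the two Miura maps.

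First I would record how this identification acts on generators: by the definitions in Section \ref{classical subsec} it sends $\bJ^{u}(z)\mapsto\p J^{u}(z)$, $\varphi^{\alpha}(z)\mapsto\varphi^{\alpha}(z)$ and $\bPhi_{\beta}(z)\mapsto\p\Phi_{\beta}(z)$, and it is an isomorphism of complexes for $d_{(0)}$. It therefore restricts to vertex superalgebra isomorphisms
\begin{align*}
&\W^{\X}(\g,f;\Gamma)\underset{\C[\X]}{\otimes}\C_{\p}\ \simeq\ \W^{k}(\g,f;\Gamma),\\
&V^{\tau_{\X}}(\g_{0})\otimes\Fne_{\X}\underset{\C[\X]}{\otimes}\C_{\p}\ \simeq\ V^{\tau_{k}}(\g_{0})\otimes\Fne,
\end{align*}
the second one again given by $\bJ^{u}\mapsto\p J^{u}$, $\bPhi_{\beta}\mapsto\p\Phi_{\beta}$ on generators; and these intertwine $\mu_{\X}\otimes_{\C[\X]}\C_{\p}$ with $\mu$, which is the content of \eqref{Miura eq1}.

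The heart of the proof is to show that, under the first isomorphism, $W^{u_{i}}_{\X}|_{\X=\p}$ corresponds to $\p\,W^{u_{i}}$. Both are $d_{(0)}$-cocycles in $C_{k}^{(0)}$ of charge $0$ and conformal weight $j_{i}+1$, with the same leading term in the weight filtration: $W^{u_{i}}_{\X}$ has leading term $\bJ^{u_{i}}(z)$, which maps to $\p J^{u_{i}}(z)$, the leading term of $\p W^{u_{i}}$. Since $C_{k}$ (hence $\hC_{\X}$) contains no elements of negative charge, charge-$0$ cocycles have no non-trivial coboundaries, so $W^{u_{i}}$ and $W^{u_{i}}_{\X}$ are genuine cocycle representatives; running the Kac--Wakimoto recursion for the correction terms with the same choice of complement on $C_{k}$ and on $\hC_{\X}$, and noting that this recursion commutes with the rescaling $\bJ^{u}=\p J^{u}$, $\bPhi_{\beta}=\p\Phi_{\beta}$ (the relevant bracket and differential relations being those of Lemma \ref{qcal lemma}), one finds that the correction terms of $\p W^{u_{i}}$ are exactly the $\X=\p$ specializations of those of $W^{u_{i}}_{\X}$. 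Hence the two cocycles coincide.

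Granting this, the conclusion is immediate: applying $\mu_{\X}$ and then specializing at $\X=\p$, the second isomorphism identifies $X^{i}_{\X}|_{\X=\p}=\mu_{\X}(W^{u_{i}}_{\X})|_{\X=\p}$ with $\mu(\p W^{u_{i}})=\p\,\mu(W^{u_{i}})$ by $\C$-linearity of $\mu$, which is the assertion. I expect the only real obstacle to be the bookkeeping in the previous paragraph: making precise that the strong generators $W^{u_{i}}$ and $W^{u_{i}}_{\X}$ are produced by one and the same normalized Kac--Wakimoto construction, so that they match under the $\p$-rescaling. Once that is in hand, the rest is formal from \eqref{Miura eq1}.
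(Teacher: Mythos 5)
Your proposal is correct and takes essentially the same route as the paper: the paper's own proof is just the comparison of leading terms, $W^{u_{i}}_{\X}=\bJ^{u_{i}}+\cdots$ versus $W^{u_{i}}=J^{u_{i}}+\cdots$ together with $\bJ^{u_{i}}=\p J^{u_{i}}$, combined with the factorization $\mu=\mu_{\X}\otimes_{\C[\X]}\mathrm{ev}_{\X=\p}$ of \eqref{Miura eq1}. The additional bookkeeping you supply — that charge-zero cocycles are honest elements of $V^{\tau_{k}}(\g_{\leq0})\otimes\Fne$ and that the correction terms match under the $\p$-rescaling once the generators are chosen compatibly — is precisely what the paper leaves implicit, so your argument is if anything more detailed, not different.
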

\begin{proof}
Compare the leading term of $W^{u_{i}}_{\X}(z)$ with that of $W^{u_{i}}(z)$. The assertion follows from the definition $\bJ^{u_{i}}(z)=\p J^{u_{i}}(z)$.
\end{proof}

\section{Applications}\label{application sec}

\subsection{$\W B_{n}$-algebras}\label{WBn subsec}
Let $n\in\Z_{\geq1}$ and $\h$ be a Cartan subalgebra of a simple Lie algebra of type $B_{n}$ with the non-degenerate symmetric bilinear form $\inv$ such that $(\theta|\theta)=2$. Denote by $\{\alpha_{i}\}_{i=1}^{n}$ a set of simple roots of type $B_{n}$ such that the simple root $\alpha_{n}$ is short.

Let $\Hi$ be the Heisenberg vertex algebra associated with $\h^{*}$ i.e. the vertex algebra $\Hi$ is generated by the even fields $\alpha_{i}(z)$ for all $i=1,\ldots,n$ satisfying
\begin{align*}
[{\alpha_{i}}_{\la}\alpha_{j}]=(\alpha_{i}|\alpha_{j})\la,
\end{align*}
and $\F$ be the vertex superalgebra generated by the odd field $\Psi(z)$ satisfying
\begin{align*}
[\Psi_{\la}\Psi]=1.
\end{align*}
Choose $\gamma\in\C$. Let $G(z)$ be an odd field on $\Hi\otimes\F$ defined by
\begin{align*}
G(z)=:(\gamma\der+b_{1}(z))(\gamma\der+b_{2}(z))\cdots(\gamma\der+b_{n}(z))\Psi(z):,
\end{align*}
where
\begin{align*}
b_{i}(z)=\sum_{j=i}^{n}\alpha_{j}(z)
\end{align*}
for $i=1,\ldots,n$, and $W_{i}(z)$ is an even field on $\Hi\otimes\F$ for $i=0,\ldots,2n-2$ defined by
\begin{align*}
[G_{\la}G]=W_{0}+\sum_{i=1}^{n-1}\gamma_{i}(W_{2i-1}\frac{\la^{2i-1}}{(2i-1)!}+W_{2i}\frac{\la^{2i}}{(2i)!})+\gamma_{n}\frac{\la^{2n}}{(2n)!},
\end{align*}
where
\begin{align*}
\gamma_{i}=\prod_{j=1}^{i}(1-2j(2j-1)\gamma^{2}).
\end{align*}

\begin{lemma}\label{WB0 lem}
Let $C_{2}(\Hi\otimes\F)$ be the subspace of $\Hi\otimes\F$ spanned by $a_{(-2)}b$ for all $a,b\in\Hi\otimes\F$. Then
\begin{align*}
[G_{\la}G]\ \equiv\ W_{0}+\sum_{i=1}^{n-1}\gamma_{i}\frac{W_{2i}\la^{2i}}{(2i)!}+\gamma_{n}\frac{\la^{2n}}{(2n)!}\quad\pmod{C_{2}(\Hi\otimes\F)}
\end{align*}
and
\begin{align*}
W_{2i}\ \equiv\sum_{1\leq j_{1}<\cdots <j_{n-i}\leq n}:b^{2}_{j_{1}}\cdots b^{2}_{j_{n-i}}:\quad\pmod{C_{2}(\Hi\otimes\F)}
\end{align*}
for all $i=0,\ldots,n-1$.
\end{lemma}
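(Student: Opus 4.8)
I would begin by noting that the first congruence is essentially free. Since $G$ is an odd element, vertex-algebra skew-symmetry gives, for $m\ge0$,
\[
G_{(m)}G=\sum_{j\ge0}(-1)^{m+j}\tfrac{1}{j!}\,\der^{\,j}\!\left(G_{(m+j)}G\right),
\]
so $\bigl(1-(-1)^{m}\bigr)G_{(m)}G$ is a sum of total derivatives, hence lies in $C_{2}(\Hi\otimes\F)$; thus $G_{(m)}G\equiv0\pmod{C_{2}(\Hi\otimes\F)}$ for every odd $m$. Reading this off the defining expansion of $[G_{\la}G]$ kills each $W_{2i-1}$-term modulo $C_{2}$, which is the first assertion. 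The same argument at $m=-1$ shows $:\!YY\!:\ \equiv0\pmod{C_{2}}$ for any odd field $Y$; I will use this below.

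For the second assertion the plan is to induct on the rank $n$. First I would record that the vectors $b_{i}=\sum_{j\ge i}\alpha_{j}$ are orthonormal: a direct computation from the $B_{n}$ data (using $(\theta|\theta)=2$ and $\alpha_{n}$ short) gives $(b_{i}|b_{j})=\delta_{ij}$ — they are the standard $\varepsilon_{i}$. Hence $b_{1}(z),\dots,b_{n}(z)$ are mutually orthogonal norm-one free bosons, each having trivial $\la$-bracket with $\Psi$. Write $G=\mathcal D_{b_{1}}(H)$, where $\mathcal D_{c}(X):=\gamma\,\der X+\ :\!cX\!:$ and $H:=\ :\!(\gamma\der+b_{2})\cdots(\gamma\der+b_{n})\Psi\!:$ is the rank-$(n-1)$ field built from $b_{2},\dots,b_{n},\Psi$. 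The crucial structural fact is that $b_{1}$ occurs neither in $H$ nor in $[H_{\mu}H]$, so $[b_{1\,\la}H]=[b_{1\,\la}[H_{\mu}H]]=0$; and since $C_{2}$ of a tensor product is $C_{2}(\cdot)\otimes\cdot+\cdot\otimes C_{2}(\cdot)$, the reduction modulo $C_{2}$ of the rank-$(n-1)$ subalgebra is compatible with that of $\Hi\otimes\F$, so the inductive hypothesis transfers verbatim.

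Next I would reduce $[G_{\la}G]$ to $[H_{\la}H]$ modulo $C_{2}$. Expanding $G=\gamma\der H+\ :\!b_{1}H\!:$ bilinearly gives four terms; applying the non-commutative Wick formulas and discarding everything that carries a derivative (equivalently a mode $\le-2$), a factor $[b_{1\,\la}H]=0$, or two $\Psi$'s (so $:\!HH\!:\ \equiv0$), the two cross terms cancel and one is left with
\[
[G_{\la}G]\ \equiv\ :\!b_{1}^{2}\,[H_{\la}H]\!:\ -\ \gamma^{2}\la^{2}[H_{\la}H]\ +\ \int_{0}^{\la}(\la-\mu)[H_{\mu}H]\,d\mu\pmod{C_{2}}.
\]
Using the first part for $H$, namely $[H_{\mu}H]\equiv\sum_{i\ge0}(H_{(2i)}H)\tfrac{\mu^{2i}}{(2i)!}\pmod{C_{2}}$, the integral becomes $\sum_{i\ge0}(H_{(2i)}H)\tfrac{\la^{2i+2}}{(2i+2)!}$ modulo $C_{2}$, and comparing coefficients of $\la^{2i}$ yields the recursion
\[
G_{(2i)}G\ \equiv\ b_{1}^{2}\,\overline{H_{(2i)}H}\ +\ \bigl(1-2i(2i-1)\gamma^{2}\bigr)\,\overline{H_{(2i-2)}H}\pmod{C_{2}},
\]
where the bar denotes the class modulo $C_{2}(\Hi\otimes\F)$.

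Finally, the induction: the base case is the rank-$0$ identity $[\Psi_{\la}\Psi]=1$, i.e.\ $\Psi_{(0)}\Psi=1$ and $\Psi_{(2i)}\Psi=0$ for $i\ge1$. Assuming $\overline{H_{(2j)}H}=\gamma_{j}\,e_{(n-1)-j}(b_{2}^{2},\dots,b_{n}^{2})$ for all $j$ (with the conventions $e_{0}=1$, $e_{m}=0$ for $m<0$), I substitute into the recursion and invoke $\gamma_{i}=(1-2i(2i-1)\gamma^{2})\gamma_{i-1}$ together with $e_{k}(b_{1}^{2},\dots,b_{n}^{2})=b_{1}^{2}\,e_{k-1}(b_{2}^{2},\dots,b_{n}^{2})+e_{k}(b_{2}^{2},\dots,b_{n}^{2})$ to obtain
\[
G_{(2i)}G\ \equiv\ \gamma_{i}\,e_{n-i}(b_{1}^{2},\dots,b_{n}^{2})\pmod{C_{2}}\qquad(i\ge0),
\]
which is exactly $\gamma_{i}W_{2i}\equiv\gamma_{i}\sum_{1\le j_{1}<\cdots<j_{n-i}\le n}b_{j_{1}}^{2}\cdots b_{j_{n-i}}^{2}$; the case $i=n$ recovers the central term $G_{(2n)}G=\gamma_{n}$. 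The one genuinely delicate step is the Wick-formula bookkeeping in the displayed reduction of $[G_{\la}G]$: one must check that each term produced by the $e^{\der\der_{\la}}$-corrections, and each term containing a derivative or a second fermion, is absorbed into $C_{2}$ — this is precisely where orthogonality of the $b_{i}$ (making $b_{1}$ external to $H$) does the work, by forcing $[b_{1\,\la}H]$, $[b_{1\,\la}[H_{\mu}H]]$ and the relevant integral terms to vanish.
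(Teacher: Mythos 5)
Your proposal is correct and follows essentially the same route as the paper's proof: the same peeling-off decomposition $G=\ :\!(\gamma\der+b_{1})G'\!:$ with the rank-$(n-1)$ field, induction on $n$ via the Wick formula, and the identity $\gamma_{i}=(1-2i(2i-1)\gamma^{2})\gamma_{i-1}$ to match the coefficients of $\la^{2i}$ modulo $C_{2}(\Hi\otimes\F)$. The only cosmetic differences are your skew-symmetry argument disposing of the odd-$\la$ terms and of $:\!HH\!:$ (which the paper obtains implicitly within the inductive computation) and your base case being $\Psi$ alone rather than $n=1$.
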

\begin{proof}
Notice that
\begin{align*}
[{b_{i}}_{\la}b_{j}]=\delta_{i,j}\la
\end{align*}
for all $i,j$. If $n=1$,
\begin{align*}
[G_{\la}G]
&=\ :b_{1}^{2}:+\gamma\der b_{1}+:(\der\Psi)\Psi:+(1-2\gamma^{2})\frac{\la^{2}}{2}\\
&\equiv\ :b_{1}^{2}:+(1-2\gamma^{2})\frac{\la^{2}}{2}\quad\pmod{C_{2}(\Hi\otimes\F)}.
\end{align*}
Let $n\geq2$ and
\begin{align*}
G'(z)=\ :(\gamma\der+b_{2}(z))\cdots(\gamma\der+b_{n}(z))\Psi(z):.
\end{align*}
Then
\begin{align*}
G(z)=\ :(\gamma\der+b_{1}(z))G'(z):.
\end{align*}
By inductions on $n$, we can assume that $G'(z)$ satisfies our assertions. Hence
\begin{align*}
[G'_{\la}G']\ \equiv\ W'_{0}+\sum_{i=1}^{n-2}\gamma_{i}\frac{W'_{2i}\la^{2i}}{(2i)!}+\gamma_{n-1}\frac{\la^{2n-2}}{(2n-2)!}\quad\pmod{C_{2}(\Hi\otimes\F)}
\end{align*}
and
\begin{align*}
W'_{2i}\ \equiv\sum_{2\leq j_{2}<\cdots <j_{n-i}\leq n}:b_{j_{2}}^{2}\cdots b_{j_{n-i}}^{2}:\quad\pmod{C_{2}(\Hi\otimes\F)}.
\end{align*}
Therefore
\begin{align*}
&[G_{\la}G]\equiv\ -\gamma^{2}\la^{2}[G'_{\la}G']-\gamma\la[G'_{\la}:b_{1}G':]+\gamma\la[G'_{\la}:b_{1}G':]+[:b_{1}G':_{\la}:b_{1}G':]\\
&\hspace{95mm}\pmod{C_{2}(\Hi\otimes\F)}\\
&\equiv\ :b_{1}^{2}W'_{0}:+\sum_{i=1}^{n-2}\gamma_{i}\frac{(:b_{1}^{2}W'_{2i}:+W'_{2i-2})\la^{2i}}{(2i)!}+\gamma_{n-1}\frac{(:b_{1}^{2}:+W'_{2n-4})\la^{2n-2}}{(2n-2)!}+\gamma_{n}\frac{\la^{2n}}{(2n)!}\\
&\hspace{95mm}\pmod{C_{2}(\Hi\otimes\F)}.
\end{align*}
Furthermore
\begin{align*}
W_{0}&\ \equiv\ :b_{1}^{2}W'_{0}:\ \equiv\ :b_{1}^{2}\cdots b_{n}^{2}:&\pmod{C_{2}(\Hi\otimes\F)},\\
W_{2n-2}&\ \equiv\ :b_{1}^{2}:+W'_{2n-4}\ \equiv\ :b_{1}^{2}:+\cdots+:b_{n}^{2}:&\pmod{C_{2}(\Hi\otimes\F)},\\
W_{2i}&\ \equiv\ :b_{1}^{2}W'_{2i}:+W'_{2i-2}\ \equiv\ \sum_{1\leq j_{1}<\cdots <j_{n-i}\leq n}:b_{j_{1}}^{2}\cdots b_{j_{n-i}}^{2}:&\pmod{C_{2}(\Hi\otimes\F)}
\end{align*}
for $i=1,\ldots,n-2$ by our assumptions. This completes the proof.
\end{proof}

The vertex subalgebra of $\Hi\otimes\F$ generated by the odd field $G(z)$ and the even fields $W_{2i}(z)$ for $i=0,\ldots,n-1$ is {\it the $\W B_{n}$-algebra for $\gamma\in\C$} introduced by Fateev and Lukyanov \cite{FL}. Let $\gamma_{\pm}\in\C$ such that $\gamma_{+}+\gamma_{-}=\gamma$ and $\gamma_{+}\gamma_{-}=-1$. Define the operators
\begin{align*}
Q_{i}:\Hi\otimes\F\rightarrow\Hi_{\gamma_{+}\alpha_{i}}\otimes\F
\end{align*}
for $i=1,\ldots,n$ by
\begin{align*}
&Q_{i}=\int \e^{\gamma_{+}\int\alpha_{i}(z)}\ dz\quad\mathrm{for}\quad i\neq n,\\
&Q_{n}=\int:\e^{\gamma_{+}\int\alpha_{n}(z)}\Psi(z): dz,
\end{align*}
where $\e^{\int\beta(z)}$ for $\beta\in\h^{*}$ is defined in Section \ref{main thm sec}. According to \cite{FL},
\begin{align*}
\W B_{n}=\bigcap_{i=1}^{n}\Ker Q_{i}
\end{align*}
for generic $\gamma$.

\begin{lemma}
\label{WB1 lem}
The odd vector $G$ and even vectors $W_{2i}$ for $i=0,\ldots,n-1$ belong to the subspace $\displaystyle\bigcap_{i=1}^{n}\Ker Q_{i}$ of $\Hi\otimes\F$.
\end{lemma}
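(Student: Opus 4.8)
\emph{Strategy and reduction.} The plan is to reduce the lemma to the single assertion that $Q_iG=0$ for every $i=1,\dots,n$. For generic $\gamma$ one has $\gamma_i\neq0$ for $1\leq i\leq n-1$, so comparing the coefficients of $\la^{2i}$ in $[G_\la G]=\sum_{m\geq0}(G_{(m)}G)\frac{\la^m}{m!}$ with the defining relation of the $W_j$ gives $W_0=G_{(0)}G$ and $W_{2i}=\gamma_i^{-1}G_{(2i)}G$ for $1\leq i\leq n-1$; hence every $W_{2i}$ lies in the vertex subalgebra of $\Hi\otimes\F$ generated by $G$. Each $Q_i$ is the zero mode of a module vertex operator---its screening current $s_i(z)$, equal to $\e^{\gamma_+\int\alpha_i(z)}$ for $i<n$ and to ${:}\e^{\gamma_+\int\alpha_n(z)}\Psi(z){:}$ for $i=n$---and therefore acts on $\Hi\otimes\F$ as a graded derivation, so $\bigcap_i\Ker Q_i$ is a vertex subalgebra. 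Thus $G\in\bigcap_i\Ker Q_i$ would already force all the $W_{2i}$ into $\bigcap_i\Ker Q_i$, and it suffices to prove $Q_iG=0$.

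\emph{Localizing the screening.} With the form normalized by $(\theta|\theta)=2$ and $\alpha_n$ short, the realization $\alpha_i=\epsilon_i-\epsilon_{i+1}$ $(i<n)$, $\alpha_n=\epsilon_n$ gives $b_i=\sum_{j\geq i}\alpha_j=\epsilon_i$, so $(b_i|b_j)=\delta_{ij}$ and $(b_i|\alpha_j)=\delta_{ij}-\delta_{i,j+1}$. Consequently $[{b_j}_\la s_i]=\gamma_+(b_j|\alpha_i)s_i$ is nonzero only for $j\in\{i,i+1\}$, with $Q_ib_i=-\gamma_+s_i$ and $Q_ib_{i+1}=\gamma_+s_i$; moreover $s_i(z)$ has regular operator product with $\der$, with every $b_j$ for $j\notin\{i,i+1\}$, and (for $i<n$) with $\Psi$. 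Hence $Q_i$ commutes with every factor $(\gamma\der+b_j)$ of $G$ with $j\notin\{i,i+1\}$ and annihilates the corresponding tails, so that, grouping $G$ so that the factors $(\gamma\der+b_i)$ and $(\gamma\der+b_{i+1})$ stand adjacent with all the remaining factors outside, the identity $Q_iG=0$ reduces to the ``core'' identities
\[
Q_i\,{:}(\gamma\der+b_i)(\gamma\der+b_{i+1})X_i{:}\ =\ 0\quad(i<n),\qquad Q_n\,{:}(\gamma\der+b_n)\Psi{:}\ =\ 0,
\]
where $X_i={:}(\gamma\der+b_{i+2})\cdots(\gamma\der+b_n)\Psi{:}$ (so $X_{n-1}=\Psi$), a field that $Q_i$ annihilates.

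\emph{The core identities.} The second one is, after identifying $b_n$ with $\alpha_n$, precisely the statement that the weight-$3/2$ generator of the Neveu--Schwarz vertex superalgebra $\W B_1$ lies in the kernel of its fermionic screening, which is known (\cite{FL}; see also \cite{KRW}). For the first, expand
\[
{:}(\gamma\der+b_i)(\gamma\der+b_{i+1})X_i{:}=\gamma^2\der^2X_i+\gamma\,{:}(\der b_{i+1})X_i{:}+\gamma\,{:}b_{i+1}(\der X_i){:}+\gamma\,{:}b_i(\der X_i){:}+{:}b_ib_{i+1}X_i{:},
\]
which has no normal-ordering corrections because $(b_i|b_{i+1})=0=(b_i|X_i)=(b_{i+1}|X_i)$. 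Applying $Q_i$ termwise, using $Q_ib_i=-\gamma_+s_i$, $Q_ib_{i+1}=\gamma_+s_i$, $Q_iX_i=0$ and $[Q_i,\der]=0$: the two middle terms cancel, and after rewriting the remaining terms via $\der s_i=\gamma_+\,{:}\alpha_is_i{:}$, $b_i-b_{i+1}=\alpha_i$, and the rearrangement rule ${:}\alpha_is_i{:}={:}s_i\alpha_i{:}+\gamma_+(\alpha_i|\alpha_i)\der s_i$ with $(\alpha_i|\alpha_i)=2$, everything collapses to a multiple of $(\alpha_i)_{(-1)}\,{:}s_iX_i{:}$ with coefficient $\pm\gamma_+(\gamma_+^2-\gamma\gamma_+-1)$. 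This vanishes since $\gamma_\pm$ are the roots of $t^2-\gamma t-1=(t-\gamma_+)(t-\gamma_-)$, i.e. $\gamma_++\gamma_-=\gamma$ and $\gamma_+\gamma_-=-1$.

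\emph{Main obstacle.} The delicate step is the last one: the cancellation relies on correctly tracking the non-associativity of the normal-ordered product and the re-expression via $\der s_i$ of iterated contractions such as those producing $(s_i)_{(-2)}$; these are exactly the contributions assembling into $\gamma_+^2-\gamma\gamma_+-1$, so none can be discarded. The signs from super skew-symmetry ($Q_ib_j=(s_i)_{(0)}b_j=-(b_j)_{(0)}s_i$) and from commuting the possibly-odd $Q_i$ past the factors of $G$ must also be kept, but they affect only the overall sign of the core identities and not the final vanishing.
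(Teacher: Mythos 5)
Your proposal is correct and follows essentially the same route as the paper: reduce to $Q_iG=0$ (the $W_{2i}$ being recovered from $[G_\lambda G]$ and the $Q_i$ acting as derivations), commute $Q_i$ past the factors $(\gamma\der+b_j)$ with $j\notin\{i,i+1\}$, and kill the remaining two-factor ``core'' by the same cancellation $\gamma_+^2-\gamma\gamma_+-1=0$ together with $\der s_i=\gamma_+{:}\alpha_i s_i{:}$, exactly as in the paper's direct $\lambda$-bracket computation. The only deviation is that for the fermionic case $i=n$ you cite the known $\W B_1$/Neveu--Schwarz screening statement of \cite{FL,KRW}, whereas the paper verifies it by the same one-line computation $\gamma_-\der S^{\alpha_n}+{:}\alpha_n S^{\alpha_n}{:}=0$.
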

\begin{proof}
It is enough to show that $Q_{i}\cdot G=0$ for all $i$. Let
\begin{align*}
&S^{\alpha_{i}}(z)=\e^{\gamma_{+}\int\alpha_{i}(z)},\\
&G_{i}(z)=\ :(\gamma\der+b_{i}(z))\cdots(\gamma\der+b_{n}(z))\Psi(z):
\end{align*}
for $i=1,\ldots,n$ and $G_{n+1}(z)=\Psi(z)$. Then, for $i=1,\ldots,n-1$,
\begin{align*}
[{S^{\alpha_{i}}}_{\la}G]_{\la=0}
=\ :(\gamma\der+b_{1})\cdots(\gamma_{+}+b_{i-1})[{S^{\alpha_{i}}}_{\la}:(\gamma\der+b_{i})(\gamma\der+b_{i+1})G_{i+2}:]_{\la=0}:
\end{align*}
and
\begin{align*}
&[{S^{\alpha_{i}}}_{\la}:(\gamma\der+b_{i})(\gamma\der+b_{i+1})G_{i+2}:]_{\la=0}\\
&=\gamma\gamma_{+}:\der S^{\alpha_{i}}G_{i+2}:-\gamma_{+}:S^{\alpha_{i}}b_{i+1}G_{i+2}:+\gamma_{+}:b_{i}S^{\alpha_{i}}G_{i+2}:\\
&=\gamma_{+}:(\gamma\der S^{\alpha_{i}}-\gamma_{+}\der S^{\alpha_{i}}-b_{i+1}S^{\alpha_{i}}+b_{i}S^{\alpha_{i}})G_{i+2}:\\
&=\ :(-\der S^{\alpha_{i}}+\gamma_{+}\alpha_{i}S^{\alpha_{i}})G_{i+2}:\\
&=0.
\end{align*}
Hence
\begin{align*}
Q_{i}\cdot G=0\quad \mathrm{for}\ i=1,\ldots,n-1.
\end{align*}
Moreover
\begin{align*}
[:S^{\alpha_{n}}\Psi:_{\la}G]=\ :(\gamma\der+b_{1})\cdots(\gamma_{+}+b_{n-1})[:S^{\alpha_{n}}\Psi:_{\la}:(\gamma\der+b_{n})\Psi:]_{\la=0}:
\end{align*}
and
\begin{align*}
[:S^{\alpha_{n}}\Psi:_{\la}:(\gamma\der+b_{n})\Psi:]_{\la=0}
&=\gamma\der S^{\alpha_{n}}-\gamma_{+}\der S^{\alpha_{n}}+:b_{n}S^{\alpha_{n}}:\ =0.
\end{align*}
Therefore
\begin{align*}
Q_{n}\cdot G=0.
\end{align*}
This completes the proof.
\end{proof}

\begin{lemma}
\label{WB2 lem}
If $k$ is generic,
\begin{align*}
\W^{k}(\osp(1,2n),f_{reg};\Gamma)\simeq\bigcap_{i=1}^{n}\Ker Q_{i},
\end{align*}
where $\gamma_{+}=-1/\sqrt{2k+2n+1}$, $f_{reg}$ is a regular nilpotent element of the even part of $\osp(1,2n)$ and $\Gamma$ is the Dynkin grading of $f_{reg}$.
\end{lemma}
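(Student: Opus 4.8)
The plan is to apply Theorem \ref{main2 thm} to $\g=\osp(1,2n)$, $f=f_{reg}$ and the Dynkin grading $\Gamma$, and then to match the resulting screening presentation with $Q_{1},\dots,Q_{n}$.

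First I would fix the root data of $\osp(1,2n)$, with $\inv$ normalized by $(\theta|\theta)=2$ for $\theta=2\delta_{1}$ the highest root of the even part $\spf_{2n}$, so that $(\delta_{i}|\delta_{j})=\tfrac12\delta_{i,j}$ and $h^{\vee}=n+\tfrac12$ (equivalently, the critical level is $-n-\tfrac12$). Take the distinguished simple roots $\alpha_{i}=\delta_{i}-\delta_{i+1}$ for $i<n$ and the odd simple root $\alpha_{n}=\delta_{n}$; the $\slf_{2}$-triple of $f_{reg}$ is the principal one of $\spf_{2n}$, whose associated grading assigns $\deg\alpha_{i}=1$ for $i<n$ and $\deg\alpha_{n}=\tfrac12$. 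Since every simple root has positive degree, $\g_{0}=\h$, $\Pi_{0}=\varnothing$, $\Pi_{1}=\{\alpha_{1},\dots,\alpha_{n-1}\}$ and $\Pi_{\frac{1}{2}}=\{\alpha_{n}\}$. Because $f_{reg}$ is principal in $\spf_{2n}$, its expansion over negative root vectors has nonzero coefficient along each $e_{-\alpha_{i}}$ with $i<n$ and along $e_{-2\delta_{n}}$; hence $\chi(e_{\alpha_{i}})=(f_{reg}|e_{\alpha_{i}})\neq0$ for every $\alpha_{i}\in\Pi_{1}$, and, since $[e_{\delta_{n}},e_{\delta_{n}}]$ is a nonzero multiple of $e_{2\delta_{n}}$ (the characteristic feature of $\osp(1,2n)$), the constant $d:=[{\Phi_{\alpha_{n}}}_{\la}\Phi_{\alpha_{n}}]=(f_{reg}|[e_{\delta_{n}},e_{\delta_{n}}])$ is nonzero. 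So the hypotheses of Theorem \ref{main2 thm} hold, and the $\Pi_{1}$-part of its screening set is all of $\Pi_{1}$.

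Theorem \ref{main2 thm} then identifies $\W^{k}(\osp(1,2n),f_{reg};\Gamma)$ with the intersection, inside $\Hi\otimes\Fne$, of $\Ker\int\e^{-\frac{1}{\kh}\int\alpha_{i}(z)}\,dz$ for $i=1,\dots,n-1$ and of $\Ker\int:\e^{-\frac{1}{\kh}\int\alpha_{n}(z)}\Phi_{\alpha_{n}}(z):dz$, where $\kh=\sqrt{k+n+\tfrac12}$, $\Hi$ is the Heisenberg algebra of $\h^{*}$ for $\inv$, and $\Fne$ is generated by $\Phi_{\alpha_{n}}$. It remains to match this with $\bigcap_{i=1}^{n}\Ker Q_{i}$. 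Setting $\Psi:=d^{-1/2}\Phi_{\alpha_{n}}$ gives $[\Psi_{\la}\Psi]=1$ and identifies $\Fne$ with $\F$. For the Heisenberg part, the form on $\h^{*}$ used in the definition of $\W B_{n}$ makes $\delta_{1},\dots,\delta_{n}$ orthonormal (equivalently, the $b_{i}$ are orthonormal), hence is twice the restriction of $\inv$; therefore $\alpha_{i}(z)\mapsto\sqrt{2}\,\alpha_{i}(z)$ is a vertex algebra isomorphism from the Heisenberg algebra of $\osp(1,2n)$ onto the one used for $\W B_{n}$. Under it the screening exponent transforms as $-\tfrac{1}{\kh}\alpha_{i}(z)\mapsto-\tfrac{1}{\sqrt{2}\,\kh}\alpha_{i}(z)=\gamma_{+}\alpha_{i}(z)$ with $\gamma_{+}=-1/(\sqrt{2}\,\kh)=-1/\sqrt{2k+2n+1}$, precisely the value in the statement. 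Hence, up to the nonzero scalar $d^{1/2}$, which does not affect kernels, the screening operators produced by Theorem \ref{main2 thm} are carried to $Q_{1},\dots,Q_{n}$, and $\W^{k}(\osp(1,2n),f_{reg};\Gamma)\simeq\bigcap_{i=1}^{n}\Ker Q_{i}$ follows.

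The substantive content lies in the first two paragraphs: computing the Dynkin grading, thereby pinning down $\Pi_{1}$ and $\Pi_{\frac{1}{2}}$, and checking the nonvanishing of $\chi(e_{\alpha_{i}})$ and of $d$. The one place that genuinely needs care is the factor $\sqrt{2}$ relating the $\osp(1,2n)$-form to the $B_{n}$-form used by Fateev and Lukyanov: this is exactly what replaces $k+h^{\vee}$ by $2k+2n+1$ under the square root, and I expect getting this normalization right to be the main (if modest) obstacle.
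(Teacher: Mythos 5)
Your proposal is correct and follows essentially the same route as the paper: apply Theorem \ref{main2 thm} to the Dynkin grading of $f_{reg}$ in $\osp(1,2n)$ (so that $\g_{0}=\h$, $\Pi_{1}$ consists of the $n-1$ even simple roots and $\Pi_{\frac{1}{2}}$ of the odd one), identify $\Fne$ with $\F$ using $\chi([e_{\alpha_{n}},e_{\alpha_{n}}])\neq0$, and rescale the simple roots by $\sqrt{2}$ to pass from the $\osp$-normalized form to the $B_{n}$ form, which turns $-\frac{1}{\kh}$ into $\gamma_{+}=-1/\sqrt{2k+2n+1}$. Your explicit checks of $\chi(e_{\alpha_{i}})\neq0$ and of the fermion normalization are left implicit in the paper but are the same computation.
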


\begin{proof}
Let $\g=\osp(1,2n)$ and $\{\beta_{i}\}_{i=1}^{n}$ be a set of simple roots of $\g$ such that $\beta_{n}$ is an odd root and $\{\beta_{1},\ldots,\beta_{n-1},2\beta_{n}\}$ is a set of simple roots of the even part of $\g$. Note that the even part of $\g$ is the symplectic Lie algebra $\spf(2n)$ of rank $n$. Let $f_{reg}$ be a regular nilpotent element of the even part of $\g$ defined by
\begin{align*}
f_{reg}=e_{-\beta_{1}}+\cdots+e_{-\beta_{n-1}}+e_{-2\beta_{n}}
\end{align*}
and $\{e,h,f_{reg}\}$ be a $\slf_{2}$-triple. We set $x=\frac{1}{2}h$ and denote by $\inv_{\g}$ the normalized even supersymmetric non-degenerate invariant bilinear form on $\g$. Notice that the dual Coxeter number $h^{\vee}$ of $\g$ is equal to $n+\frac{1}{2}$. A semisimple element $x$ defines the Dynkin grading $\Gamma$ on $\g$ with respect to $f_{reg}$ denoted by
\begin{align*}
\Gamma:\g=\bigoplus_{j\in\frac{1}{2}\Z}\g_{j},
\end{align*}
where $\g_{j}$ is the eigenspace of $\ad x$ with an eigenvalue $j$. Then
\begin{align*}
\g_{0}=\h',\quad
\Pi_{\frac{1}{2}}=\{\beta_{n}\},\quad
\Pi_{1}=\{\beta_{1},\ldots,\beta_{n-1}\},
\end{align*}
where $\h'$ is a Cartan subalgebra of $\g$ containing $x$. The subspace $\g_{\frac{1}{2}}$ of $\g$ is the one-dimensional odd space spanned by $e_{\beta_{n}}$. The neutral free superfermion vertex superalgebra associated with $\g_{\frac{1}{2}}$ is isomorphic to $\F$ since $\chi([e_{\beta_{n}},e_{\beta_{n}}])\neq0$. Let $\Hi'$ be the Heisenberg vertex algebra associated with $\h'$. If $k$ is generic, then by Theorem \ref{main2 thm},  the vertex superalgebra $\W^{k}(\g,f_{reg};\Gamma)$ is isomorphic to the vertex subalgebra of $\Hi'\otimes\F$, which is the intersection of kernels of the screening operators:
\begin{align*}
\W^{k}(\g,f_{reg};\Gamma)\simeq\bigcap_{i=1}^{n-1}\Ker\int \e^{-\frac{1}{\kh'}\int\beta_{i}(z)}\ dz\cap
\Ker\int:\e^{-\frac{1}{\kh'}\int\beta_{n}(z)}\Psi(z):dz,
\end{align*}
where $\kh'=\sqrt{\mathstrut k+n+\frac{1}{2}}$. Let
\begin{align*}
\alpha_{i}=\sqrt{2}\beta_{i}\in{\h'}^{*}\quad\mathrm{for}\ i=1,\ldots,n.
\end{align*}
Then $(\alpha_{i}|\alpha_{j})_{\g}=(\alpha_{i}|\alpha_{j})$ for all $i,j$ and the vertex algebra $\Hi'$ is isomorphic to $\Hi$. Moreover
\begin{align*}
\e^{-\frac{1}{\kh'}\int\beta_{i}(z)}=\e^{\gamma_{+}\int\alpha_{i}(z)}
\end{align*}
for all $i$, where $\gamma_{+}=-1/\sqrt{2}\kh'$. Therefore our assertion follows.
\end{proof}

\begin{theorem}\label{WBn thm}
If $k+n+\frac{1}{2}\neq0$,
\begin{align*}
\W^{k}(\osp(1,2n),f_{reg};\Gamma)\simeq\W B_{n},
\end{align*}
where $\gamma=\kh-\frac{1}{\kh}$, $\kh=\sqrt{2k+2n+1}$, $f_{reg}$ is a regular nilpotent element of the even part of $\osp(1,2n)$ and $\Gamma$ is the Dynkin grading of $f_{reg}$. Moreover the fields
\begin{align*}
G(z),\quad W_{2i}(z)\ (i=0,\ldots,n-1)
\end{align*}
are strongly generating fields of $\W^{k}(\osp(1,2n),f_{reg};\Gamma)$ with conformal weights $\conf(G)=n+\frac{1}{2}$ and $\conf(W_{2i})=2n-2i$.
\end{theorem}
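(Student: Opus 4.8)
The plan is to prove the isomorphism first for generic $k$ by feeding Theorem~\ref{main2 thm} into the computations of Lemmas~\ref{WB1 lem}--\ref{WB2 lem}, and then to propagate it over the whole non-critical locus by means of the $\C[\X]$-family of Section~\ref{Miura sec}. For generic $k$ I would apply Theorem~\ref{main2 thm} to $\g=\osp(1,2n)$, $f_{reg}$ and its Dynkin grading $\Gamma$, for which $\g_0$ is a Cartan subalgebra, $\Pi_{\frac{1}{2}}=\{\beta_n\}$, $\Pi_1=\{\beta_1,\dots,\beta_{n-1}\}$ and $\Fne\simeq\F$ (since $\chi([e_{\beta_n},e_{\beta_n}])\neq0$). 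By Lemma~\ref{WB2 lem} this realizes $\W^k(\osp(1,2n),f_{reg};\Gamma)$ as $\bigcap_{i=1}^n\Ker Q_i$ inside $\Hi\otimes\F$ with $\gamma_+=-1/\kh$, where $\kh=\sqrt{2k+2n+1}=\sqrt{2(k+h^\vee)}$ because $h^\vee=n+\tfrac12$. As $\gamma_+\gamma_-=-1$ forces $\gamma_-=\kh$, we get $\gamma=\gamma_++\gamma_-=\kh-\kh^{-1}$, the value in the statement; and for generic $\gamma$ the Fateev--Lukyanov description gives $\bigcap_{i=1}^n\Ker Q_i=\W B_n$, while Lemma~\ref{WB1 lem} places $G$ and the $W_{2i}$ in this intersection. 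This settles the isomorphism for generic $k$.

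The conformal weights are immediate: $G$ is obtained from $\Psi$ (weight $\tfrac12$) by $n$ applications of the weight-raising operators $\gamma\der+b_i(z)$, so $\conf(G)=n+\tfrac12$, and $W_{2i}$ is the coefficient of $\la^{2i}/(2i)!$ in $[G_\la G]$, whence $\conf(W_{2i})=2(n+\tfrac12)-2i-1=2n-2i$. Moreover, $\gr\W^k(\g,f;\Gamma)\simeq V^{\tau_k}(\g^f)$ (Section~\ref{Miura sec}) together with the $\slf_2$-decomposition of $\osp(1,2n)$ relative to $f_{reg}$ — the even part $\spf(2n)$ contributing irreducibles with lowest-weight $\ad x$-degrees $-1,-3,\dots,-(2n-1)$ and the odd part one of degree $-(n-\tfrac12)$ — shows that $\W^k(\osp(1,2n),f_{reg};\Gamma)$ is freely strongly generated by fields of the distinct conformal weights $2,4,\dots,2n$ and $n+\tfrac12$; since $\W^k$ has no nonzero fields of weight $\le1$, the fields $G$ and the $W_{2i}$, which have exactly these weights, must form such a free strong generating set for generic $k$.

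The main obstacle is the extension to all non-critical $k$. I would work with $\W^\X(\osp(1,2n),f_{reg};\Gamma)$ and its Miura map $\mu_\X$, which by Lemma~\ref{Miura2 lem} and its proof is injective with image $\mu_\X(\W^\X)=\bigcap_{[\beta]}\Ker Q_{[\beta],\X}$. Each $Q_{[\beta],\X}$ is a $\C[\X]$-linear map between free $\C[\X]$-modules, so its kernel, hence $\mu_\X(\W^\X)$, is a saturated submodule of $V^{\tau_{\X}}(\g_0)\otimes\Fne_{\X}$, i.e. the cokernel is torsion-free over the PID $\C[\X]$. Two consequences: first, $\mu=\mu_\X\underset{\C[\X]}{\otimes}\mathrm{ev}_{\X=\p}$ stays injective for every $k\neq-h^\vee$; second, after base change to a finite extension $R$ of $\C[\X,\X^{-1}]$ containing $\kh=\sqrt{2/\X}$, an element of a graded piece of $V^{\tau_{\X}}(\g_0)\otimes\Fne_{\X}$ lies in $\mu_\X(\W^\X)\otimes_{\C[\X]}R$ as soon as it does so after infinitely many specializations. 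Setting $\gamma=\kh-\kh^{-1}$ makes $G$ and the $W_{2i}$ into elements of $(V^{\tau_{\X}}(\g_0)\otimes\Fne_{\X})\otimes_{\C[\X]}R$ whose specializations lie in the Miura image for all generic $k$ by the previous paragraphs, so $G,W_{2i}\in\mu_\X(\W^\X)\otimes_{\C[\X]}R$.

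That they strongly generate I would see by comparing Zhu's $C_2$-algebras: that of $\mu_\X(\W^\X)\otimes R$ is the polynomial algebra over $R$ on even generators of degrees $2,4,\dots,2n$ tensored with a single exterior generator of degree $n+\tfrac12$ (the super-analogue of $R_{\W^k(\g,f)}\cong\C[\g^f]$), and by Lemma~\ref{WB0 lem} the $C_2$-image of $W_{2i}$ is the $(n-i)$-th elementary symmetric function $e_{n-i}(b_1^2,\dots,b_n^2)$ of the squares of the orthonormal currents $b_i$ — precisely the fundamental $W(B_n)$-invariants, which generate the even part — while $\overline G$ is the odd generator. Since a positively graded vertex algebra is strongly generated by any lift of generators of its $C_2$-algebra, and a count of generators against $\gr\W^\X$ rules out relations, $\mu_\X(\W^\X)\otimes R$ is freely strongly generated by $G$ and the $W_{2i}$. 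Specializing at an arbitrary $\kh\neq0$ (the critical level $k=-h^\vee$ being precisely $\kh=0$) and invoking the injectivity of $\mu$, one concludes $\W^k(\osp(1,2n),f_{reg};\Gamma)\simeq\mu(\W^k)=\langle G,W_{2i}\rangle=\W B_n$ with the asserted strong generators and conformal weights for all $k\neq -n-\tfrac12$. The genuinely delicate steps are the saturation/torsion-freeness bookkeeping for the $\C[\X]$-model and the identification of the $C_2$-algebra of the Miura image with $\C[\g^{f_{reg}}]$ compatibly with the currents $b_i$; everything else reduces to the screening computation of Lemmas~\ref{WB1 lem}--\ref{WB2 lem} and routine vertex-algebra manipulation.
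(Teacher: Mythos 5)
Your generic-level argument is exactly the paper's: Theorem~\ref{main2 thm} applied to $\osp(1,2n)$ with the Dynkin grading, the identification $\gamma_{+}=-1/\kh$, $\gamma_{-}=\kh$, $\gamma=\kh-\kh^{-1}$, and Lemmas~\ref{WB0 lem}--\ref{WB2 lem} together with the Fateev--Lukyanov description of $\W B_{n}$ as $\bigcap_{i}\Ker Q_{i}$ for generic $\gamma$. Where you diverge is the passage to all $k\neq-n-\tfrac12$. The paper's route is much lighter: by Corollary~\ref{Miura cor} the $\W$-algebra has canonical strong generators $G',X^{1},\dots,X^{n}$ of weights $n+\tfrac12,2,4,\dots,2n$, for generic $k$ one can arrange $\mu(G')=G$, $\mu(X^{i})=W_{2n-2i}$, and Lemma~\ref{Miura3 lem} (the $\C[\X]$-family $W^{u_i}_{\X}$ specializes, after $\mu_\X$, to $\p\cdot\mu(W^{u_i})$) transports these identities from the generic locus to every non-critical level, whence $\mu$ is an isomorphism onto $\W B_{n}$ everywhere. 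Your substitute --- saturation of $\mu_{\X}(\W^{\X})=\bigcap\Ker Q_{[\beta],\X}$ inside a free $\C[\X]$-module, base change to a ring containing $\kh$, a density-of-specializations argument to place $G,W_{2i}$ in the family image, and a Zhu $C_{2}$-algebra argument for strong generation --- is heavier, although your saturation observation does give a clean proof that $\mu$ stays injective at every non-critical level, which is welcome here since Lemma~\ref{Miura1 lem} is stated only for Lie algebras.

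There is, however, a genuine gap in your strong-generation step. First, in the second paragraph the inference ``$G$ and the $W_{2i}$ have the same distinct weights as a free strong generating set, hence they form one'' is invalid as stated: an element of the correct conformal weight could differ from a true generator by a normally ordered polynomial in lower-weight generators with the generator itself entering with coefficient zero, so weight bookkeeping alone proves nothing (this is harmless for the generic isomorphism, but it is exactly the point at issue for the ``moreover'' clause). Second, your $C_{2}$-argument conflates two different objects: Lemma~\ref{WB0 lem} computes $G$ and $W_{2i}$ modulo $C_{2}(\Hi\otimes\F)$, i.e.\ their images in the Zhu commutative algebra of the \emph{ambient} algebra, whereas what you need is that they generate the Zhu algebra of the subalgebra $\mu_{\X}(\W^{\X})\otimes R$ itself; the natural map from the latter to the former is neither injective nor surjective in general, so ``their ambient $C_{2}$-images are the elementary symmetric functions'' does not yield generation of $R_{\mu_{\X}(\W^{\X})\otimes R}$, and the appeal to ``a count of generators against $\gr\W^{\X}$'' does not close this (the same degree-count issue as above), quite apart from the fact that Li's lift-of-$C_{2}$-generators lemma is being invoked over the ring $R$ rather than over $\C$. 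To repair this you essentially need to match $G,W_{2i}$ against the canonical generators $W^{u_i}_{\X}$ of the polynomial family --- which is precisely what the paper's Lemma~\ref{Miura3 lem} is designed to do --- or argue via leading terms/characters directly; as written, the specialization at non-generic $k$ is not yet justified.
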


\begin{proof}
Let
\begin{align*}
f=f_{reg},\quad
\g=\osp(1,2n).
\end{align*}
If $k$ is generic, by Lemma \ref{WB2 lem},
\begin{align}\label{WB eq1}
\W^{k}(\g,f;\Gamma)\simeq\bigcap_{i=1}^{n}\Ker Q_{i},
\end{align}
where $\gamma_{+}=-1/\sqrt{2k+2n+1}$, which describes the image of the Miura map $\mu$ (see Section \ref{Miura sec} for the definition). The conformal weights of $\Hi\otimes\F$ induced by this isomorphism is determined by
\begin{align*}
\conf(\alpha_{i})=1,\quad
\conf(\Psi)=\frac{1}{2}.
\end{align*}
By Lemma \ref{WB1 lem}, the odd vector $G$ and $W_{2i}$ for $i=0,\ldots,n-1$ belong to $\bigcap_{i=1}^{n}\Ker\ Q_{i}$ and
\begin{align*}
\conf(G)=n+\frac{1}{2},\quad \conf(W_{2i})=2n-2i.
\end{align*}
By definition, the $\W B_{n}$-algebra is generated by $G(z)$ and $W_{2i}(z)$ for all $i$ as a vertex subalgebra in $\Hi\otimes\F$. Since $W_{2i}(z)$ has a leading term
\begin{align*}
W_{2i}(z)=\sum_{1\leq j_{1}<\cdots <j_{n-i}\leq n}:b^{2}_{j_{1}}(z)\cdots b^{2}_{j_{n-i}}(z):+\cdots
\end{align*}
for $i=0,\ldots,n-1$ by Lemma \ref{WB0 lem}, there exists a set of strongly generating fields of the $\W B_{n}$-algebras such that $G(z)$ and $W_{2i}(z)$ are included in this set. Let $\g_{ev}$ be the even part of $\g$ and $\g_{od}$ be the odd part of $\g$. Denote by $\g^{f}$ the centralizer of $f$ in $\g$. Then
\begin{align*}
\g^{f}=\g_{ev}^{f}\oplus\g_{od}^{f}.
\end{align*}
Since $\g_{ev}=\spf(2n)$ and $f$ is a regular nilpotent element of $\g_{ev}$,
\begin{align*}
\dim\g_{ev}^{f}=n
\end{align*}
and there exists a basis $\{u_{i}\}_{i=1}^{n}$ of $\g_{ev}^{f}$ such that
\begin{align*}
u_{i}\in\g_{-j_{i}},\quad
j_{i}=2i-1
\end{align*}
for $i=1,\ldots,n$, where $j_{i}$ is called the $i$-th exponent of $\spf(2n)$. A subspace $\g_{od}^{f}$ of $\g^{f}$ is a one-dimensional odd space spanned by 
\begin{align*}
e_{-\beta_{1}-\cdots-\beta_{n-1}-\beta_{n}}\in\g_{-n+\frac{1}{2}},
\end{align*}
where we follow the notation in the proof of Lemma \ref{WB2 lem}. Therefore, by Corollary \ref{Miura cor}, the vertex superalgebra $\W^{k}(\g,f;\Gamma)$ is generated by an odd field $G'(z)$ and even fields $X^{i}(z)$ with conformal weights
\begin{align*}
\conf(G')=n+\frac{1}{2},\quad\conf(X^{i})=j_{i}+1=2i
\end{align*}
for $i=1,\ldots,n$. Therefore, for generic $k$, we can choose $X^{i}(z)$ and $G'(z)$ such that
\begin{align}\label{WB eq2}
\mu(G')=G,\quad\mu(X^{i})=W_{2n-2i}.
\end{align}
Hence the isomorphism \eqref{WB eq1} implies that the Miura map
\begin{align}\label{WB eq3}
\mu:\W^{k}(\g,f;\Gamma)\rightarrow\W B_{n}
\end{align}
is an isomorphism for generic $k$. Since the equations \eqref{WB eq2} hold not only for generic $k$ but also for $k\neq-n-\frac{1}{2}$ by Lemma \ref{Miura3 lem}, the Miura map \eqref{WB eq3} is also an isomorphism for $k\neq-n-\frac{1}{2}$. This completes the proof.
\end{proof}

\begin{rem}
For $k\neq-h^{\vee}(=-n-\frac{1}{2})$, the $\W$-algebra $\W^{k}(\osp(1,2n),f_{reg};\Gamma)$ is isomorphic to the vertex subalgebra of $\Hi\otimes\F$ generated by $G(z)$. Therefore the Feigin-Frenkel duality
\begin{align*}
\W^{k}(\osp(1,2n),f_{reg};\Gamma)\simeq\W^{k'}(\osp(1,2n),f_{reg};\Gamma)
\end{align*}
holds for this case, where $4(k+h^{\vee})(k'+h^{\vee})=1$.
\end{rem}

\subsection{$\W^{(2)}_{n}$-algebras}\label{W2n sec}
Let $n\in\Z_{\geq 2}$, $k\in\C$ such that $k+n\neq0$ and $V$ be a vector space spanned by
\begin{align*}
a_{n-1},\ a_{n-2},\ldots,\ a_{1},\ \psi,\ \xi
\end{align*}
with non-degenerate $\C$-bilinear form $\inv_{V}:V\times V\rightarrow\C$ defined by the following Gram matrix:
\begin{align*}
\bordermatrix{
          & a_{n-1} & a_{n-2} & a_{n-3}  & \cdots & a_{1}    & \psi    & \xi       \cr
a_{n-1} & 2(k+n)  & -k-n    & 0         & \cdots & 0        & 0        &  0        \cr
a_{n-2} & -k-n    & 2(k+n)  & -k-n     & \cdots & 0         & 0        & 0         \cr
\vdots & \vdots & \vdots & \vdots  & \ddots & \vdots & \vdots & \vdots \cr
a_{1}    & 0        & 0         & 0          & \cdots & 2(k+n)  & -k-n    & 0         \cr
\psi     &  0      & 0          & 0         & \cdots & -k-n    & 1         & 1         \cr
\xi      & 0       & 0          & 0          & \cdots & 0         & 1         & 0          \cr
}_{.}
\end{align*}
Let $\Hi$ be the Heisenberg vertex algebra associated with $V$ and $\inv_{V}$, and $\Hi_{m\xi}$ be the $\Hi$-module with the highest weight $m\xi\in V$ for $m\in\Z$. Denote by $e^{m\xi}$ the highest weight vector of $\Hi_{m\xi}$. Then
\begin{align*}
\V_{\xi}=\bigoplus_{m\in\Z}\Hi_{m\xi}
\end{align*}
has a vertex algebra structure such that the vertex operator of $e^{m\xi}$ is defined by
\begin{align*}
e^{m\xi}(z)=Y(e^{m\xi},z):=\e^{m\int\xi(z)}
\end{align*}
for all $m\in\Z$. Since $(\xi\mid\xi)_{V}=0$, the parity of $e^{m\xi}(z)$ is even for all $m\in\Z$ and the following formula holds:
\begin{align*}
[{e^{m\xi}}_{\la}\e^{m'\xi}]=0
\end{align*}
for all $m,m'\in\Z$. The $\W^{(2)}_{n}$-algebra is defined as a vertex subalgebra of $\V_{\xi}$ generated by
\begin{align*}
\E(z)=e^{\xi}(z),\quad
\F(z)=:\Po(z) e^{-\xi}(z):,
\end{align*}
where
\begin{align*}
\Po=-((k+n-1)\der+\psi+\sum_{i=1}^{n-1}a_{i})\cdot((k+n-1)\der+\psi+\sum_{i=1}^{n-2}a_{i})
\cdots((k+n-1)\der+\psi+a_{1})\psi.
\end{align*}
We remark that we should substitute $\Po$ for $\F$ after expanding $\Po$ formally. For example, if $n=2$,
\begin{align*}
&\Po=-(k+1)\der\psi-(\psi+a_{1})\psi,\\
&\F(z)=-(k+1):(\der\psi)(z)\e^{-\xi}(z):-:(\psi(z)+a_{1}(z))\psi(z)\e^{-\xi}(z):.
\end{align*}
Let
\begin{align*}
A_{i}(z)=\e^{\int a_{i}(z)}\quad\mathrm{for}\ i=1,\ldots,n-1,\quad
Q(z)=\e^{\int \psi(z)}.
\end{align*}
By a result in \cite{FS},
\begin{align*}
\W^{(2)}_{n}=\bigcap_{i=1}^{n-1}\Ker\int A_{i}(z)\ dz\cap\Ker\int Q(z)\ dz
\end{align*}
for generic $k$.

For $n=2$, the vertex algebra $\W^{(2)}_{2}$ is isomorphic to the universal affine vertex algebra associated with $\slf_{2}$ at level $k$ and for $n=3$, the vertex algebra $\W^{(2)}_{3}$ is isomorphic to the Bershadsky-Polyakov algebra (\cite{B,P}).

\begin{rem}
Feigin and Semikhatov gave equivalent $n+1$ definitions of the $\W^{(2)}_{n}$-algebra, denoted by $\W^{(2)}_{n[m]}$ for $m\in\Z$ with $0\leq m\leq n$. In this paper, we use only the definition of $\W^{(2)}_{n[0]}$.
\end{rem}

\begin{lemma}
\begin{multline*}
\F(z)=-:((k+n-1)(\der+\xi(z))+\psi(z)+\sum_{i=1}^{n-1}a_{i}(z))\\
\cdots((k+n-1)(\der+\xi(z))+\psi(z)+a_{1}(z))\psi(z)\e^{-\xi}(z):.
\end{multline*}
\end{lemma}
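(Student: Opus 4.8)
The plan is to prove the identity by induction, building the nested normal‑ordered product up from its innermost term $\psi$ one differential factor at a time; the displayed right–hand side is precisely what one obtains by pulling $\e^{-\xi}(z)$ into the innermost slot of $:\Po(z)\e^{-\xi}(z):$ and then replacing each $\der$ that acts on the remaining factors by $\der+\xi(z)$, so the content of the lemma is that these two operations are compatible. Concretely, I would write $\Po=-O_{n-1}O_{n-2}\cdots O_{1}\psi$, where $O_{j}$ is the operator $(k+n-1)\der+\psi+\sum_{i=1}^{j}a_{i}$ acting on the Heisenberg vertex algebra generated by $a_{1},\dots,a_{n-1},\psi$ (with $\der$ the translation operator and $\psi,a_{i}$ acting by left normal‑ordered multiplication), set $P_{j}=O_{j}\cdots O_{1}\psi$, and let $\widetilde{P}_{j}$ be the field obtained by letting the shifted operators $O_{j}'=(k+n-1)(\der+\xi(z))+\psi(z)+\sum_{i=1}^{j}a_{i}(z)$ act successively on $:\psi(z)\e^{-\xi}(z):$. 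It then suffices to prove $:P_{j}(z)\e^{-\xi}(z):=\widetilde{P}_{j}$ for $0\le j\le n-1$, by induction on $j$: the case $j=0$ is the tautology $:\psi(z)\e^{-\xi}(z):=\widetilde{P}_{0}$, and $j=1$ recovers the displayed $n=2$ formula.

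For the inductive step, distributing $O_{j}$ over $P_{j-1}$ and invoking the hypothesis $:P_{j-1}\e^{-\xi}:=\widetilde{P}_{j-1}$ reduces the claim to two facts. The first is that appending $\e^{-\xi}(z)$ in the innermost slot commutes with left normal‑ordered multiplication by a current $c\in\{\psi,a_{1},\dots,a_{j}\}$, i.e. $:(cP_{j-1})(z)\e^{-\xi}(z):\,=\,:c(z)\bigl(:P_{j-1}(z)\e^{-\xi}(z):\bigr):$; this is immediate from the right–nested definition of the iterated normal‑ordered product. The second, and the heart of the matter, is the dressing identity
\begin{align*}
:(\der P_{j-1})(z)\e^{-\xi}(z):\,=\,\der\bigl(:P_{j-1}(z)\e^{-\xi}(z):\bigr)+\,:\xi(z)\bigl(:P_{j-1}(z)\e^{-\xi}(z):\bigr):.
\end{align*}
Granting both, one gets $:P_{j}\e^{-\xi}:=(k+n-1)\der\widetilde{P}_{j-1}+(k+n-1):\xi\widetilde{P}_{j-1}:+\sum_{c\in\{\psi,a_{1},\dots,a_{j}\}}:c\,\widetilde{P}_{j-1}:=O_{j}'\,\widetilde{P}_{j-1}=\widetilde{P}_{j}$, closing the induction; restoring the overall sign gives the stated formula for $\F(z)$.

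The only genuine work is the dressing identity, and that is where I expect the main obstacle. Applying the Leibniz rule for the translation operator to $\der(:P_{j-1}\e^{-\xi}:)$ and using the standard formula $\der\,\e^{-\xi}(z)=-:\xi(z)\,\e^{-\xi}(z):$, the identity becomes the assertion that, inside the normal‑ordered product, $\xi(z)$ can be transported from the slot immediately to the left of $\e^{-\xi}(z)$ out to the outermost slot, across all the factors $\der^{m}\psi$ and $\der^{m}a_{i}$ making up $P_{j-1}$. One carries this out by iterating quasi‑commutativity and quasi‑associativity of the normal‑ordered product, and here the Gram matrix is decisive: $(\xi\mid\xi)_{V}=0$ makes $\xi(z)$ have trivial operator product with $\e^{-\xi}(z)$, $(\xi\mid a_{i})_{V}=0$ makes it commute with every $a_{i}(z)$, and $[\xi_{\la}\psi]=(\xi\mid\psi)_{V}\la$ has vanishing constant term; as a result the correction terms produced during the transport are either multiples of $\der\ket{0}=0$ or of the form $:(\der^{s}\xi)(\cdots)\e^{-\xi}:$, and the latter occur in cancelling pairs — as already visible in the model case $:\psi(:\xi\e^{-\xi}:):\,=\,:\xi(:\psi\e^{-\xi}:):$, where two copies of $:(\der\xi)\e^{-\xi}:$ cancel. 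Thus the argument is a finite, self‑cancelling bookkeeping of reorderings; the hard part is organizing it so that the cancellations are transparent, and the vanishing of $(\xi\mid\xi)_{V}$ and $(\xi\mid a_{i})_{V}$ is exactly what keeps the bookkeeping finite.
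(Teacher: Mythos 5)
Your proposal is correct and takes essentially the same route as the paper: the paper treats the case $n=2$ by exactly your manipulation, namely $:(\der\psi)\e^{-\xi}:\,=\der(:\psi\e^{-\xi}:)-:\psi(\der\e^{-\xi}):$ combined with $\der\e^{-\xi}(z)=-:\xi(z)\e^{-\xi}(z):$, and then asserts that the general case follows "similarly using the induction," which is precisely the factor-by-factor induction you describe. Your dressing/transport identity (moving $\xi$ from the innermost slot next to $\e^{-\xi}$ to the outermost slot, using $(\xi|\xi)_{V}=(\xi|a_{i})_{V}=0$ and the vanishing constant term of $[\xi_{\la}\psi]$) is exactly the step the paper leaves implicit, and it does hold.
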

\begin{proof}
If $n=2$,
\begin{align*}
\F(z)=&-(k+1):(\der\psi)(z)\e^{-\xi}(z):-:(\psi(z)+a_{1}(z))\psi(z)\e^{-\xi}(z):\\
=&-(k+1)(\der:\psi(z)\e^{-\xi}(z):-:\psi(z)\der\e^{-\xi}(z):)-:(\psi(z)+a_{1}(z))\psi(z)\e^{-\xi}(z):\\
=&-:((k+1)(\der+\xi(z))+\psi(z)+a_{1}(z))\psi(z)\e^{-\xi}(z):.
\end{align*}
For $n\geq3$, the above formula is proved similarly using the induction.
\end{proof}

 Assume $n\geq3$. Let $\{e_{i,j}\}_{1\leq i,j\leq n}$ be the standard basis of $\gl_{n}$ and
\begin{align*}
h_{i}=e_{i,i}-e_{i+1,i+1}
\end{align*}
for $i=1,\ldots,n-1$. Denote by
\begin{align*}
\g=\slf_{n}\subset\gl_{n}
\end{align*}
the special linear Lie algebra of rank $n$. Then $\{h_{i}\}_{i=1}^{n-1}$ is a basis of the Cartan subalgebra $\h$. Let $\epsilon_{i}$ be the dual element of $e_{i,i}$ for $i=1,\ldots,n$ and $\alpha_{j}=\epsilon_{j}-\epsilon_{j+1}$ for $j=1,\ldots,n-1$. Then $\Delta=\{\epsilon_{i}-\epsilon_{j}\}_{1\leq i\neq j\leq n}$ is a root system of $\g$ and
\begin{align*}
\Pi=\{\alpha_{1},\ldots,\alpha_{n-1}\}
\end{align*}
is a set of simple roots of $\Delta$. Choose a root vector $e_{\epsilon_{i}-\epsilon_{j}}=e_{i,j}$. Define a subregular nilpotent element $f=f_{sub}$ of $\g$ by
\begin{align*}
f_{sub}:=e_{-\alpha_{2}}+\cdots+e_{-\alpha_{n-1}}
\end{align*}
and a semisimple element $x$ of $\g$ by
\begin{align*}
x=\frac{1}{2n}\sum_{i=1}^{n-1}(n-i)(in-2)h_{i}.
\end{align*}
Then $\ad x$ defines a good grading $\Gamma$ with respect to $f$. The corresponding weighted Dynkin diagram is the following:
\begin{align}\label{Dynkin diag}
\begin{Dynkin}
\Dbloc{\Dcirc\Deast\Dtext{t}{0}}
\Dbloc{\Dcirc\Dwest\Deast\Dtext{t}{1}}
\Dbloc{\Ddots}
\Dbloc{\Dcirc\Dwest\Dtext{t}{1}}
\end{Dynkin}
\end{align}
Let $\g_{j}$ be the homogeneous subspace of $\g$ with degree $j$. Then
\begin{align*}
\g_{0}=\C e_{\alpha_{1}}\oplus\h\oplus\C e_{-\alpha_{1}},\quad
\g_{\frac{1}{2}}=0.
\end{align*}
Denote by $\Del$, $\Pp$, $[\Pp]$ the restricted root system associated with $\Gamma$, a base of $\Del$, a quotient set of $\Pp$ respectively, as defined in Section \ref{Screening sec}. We have
\begin{align*}
&\Pp=\{\alpha_{1}+\alpha_{2},\ \alpha_{2},\ldots,\ \alpha_{n-1}\},\\
&[\Pp]=\{[\alpha_{2}],\ldots,[\alpha_{n-1}]\},
\end{align*}
where
\begin{align*}
[\alpha_{2}]=\{\alpha_{1}+\alpha_{2},\ \alpha_{2}\},\quad
[\alpha_{i}]=\{\alpha_{i}\}\quad\mathrm{for}\ i>2.
\end{align*}
Let $\inv$ be the non-degenerate symmetric invariant bilinear form on $\g$ defined by $(u|v)=\mathrm{tr}(u v)$ for $u,v\in\g$. By Theorem \ref{main1 thm}, the vertex algebra $\W^{k}(\slf_{n},f_{sub};\Gamma)$ is isomorphic to the vertex subalgebra of $\Vtau$, which is the intersection of kernels of the screening operators:
\begin{align*}
\W^{k}(\slf_{n},f_{sub};\Gamma)\simeq\bigcap_{i=2}^{n-1}\Ker\int S^{\alpha_{i}}(z)\ dz
\end{align*}
for generic $k$. Let $E(z)$ and $F(z)$ be th fields on $\Vtau$ defined by
\begin{align*}
E(z)=&\ J^{e_{\alpha_{1}}}(z),\\
F(z)=&\ :((k+n-1)\der+\sum_{i=1}^{n-1}J^{h_{i}}(z))\cdot((k+n-1)\der+\sum_{i=1}^{n-2}J^{h_{i}}(z))\\
&\cdots((k+n-1)\der+J^{h_{1}}(z)+J^{h_{2}}(z))J^{e_{-\alpha_{1}}}(z):.
\end{align*}

\begin{lemma}
\begin{align*}
E,F\ \in\ \bigcap_{i=2}^{n-1}\Ker\int S^{\alpha_{i}}(z)\ dz.
\end{align*}
\end{lemma}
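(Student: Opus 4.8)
\emph{Setup and reduction to generators.} The plan is to evaluate each operator $\int S^{\alpha_i}(z)\,dz$ directly on $E$ and on $F$, using the description obtained in Section~\ref{VAstr sec}. By Proposition~\ref{tS prop}, $\int S^{\alpha_i}(z)\,dz$ is the zero mode of $\psi_{\alpha_i}$ in the vertex superalgebra $H(\pC_{k},\dstz)\simeq\Vtau\otimes H(\g_{+},\C)$; as such it is a derivation of all products, it commutes with $\der$, it annihilates $\ket0$, and it raises the charge by $1$, so it maps the charge-zero piece $\Vtau$ into the charge-one piece $H^{1}(\pC_{k},\dstz)\simeq\bigoplus_{[\beta]\in[\Pp]}M_{[\beta]}$. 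By the Leibniz rule it thus suffices to know its action on the generators $J^{e_{\alpha_1}}$, $J^{h_1},\dots,J^{h_{n-1}}$, $J^{e_{-\alpha_1}}$ of $\Vtau$, all of which lie in $\g_{0}$; for these, Proposition~\ref{Screening prop} together with skew-symmetry gives $\int S^{\alpha_i}(z)\,dz\cdot J^{u}=-\sum_{\beta\in[\alpha_i]}c^{\alpha_i}_{\beta,u}\,\psi_{\beta}$ for $u\in\g_{0}$, where $c^{\alpha_i}_{\beta,u}$ denotes the $e_{\alpha_i}$-component of $[e_{\beta},u]$. In particular, for every $i\in\{2,\dots,n-1\}$ and $\beta\in[\alpha_i]$ the vector $[e_{\beta},e_{\alpha_1}]$ has weight $\beta+\alpha_1\neq\alpha_i$ (since $\alpha_i-\alpha_1$, a difference of distinct simple roots, is not a root), so $\int S^{\alpha_i}(z)\,dz\cdot J^{e_{\alpha_1}}=0$; this already proves $E\in\bigcap_{i=2}^{n-1}\Ker\int S^{\alpha_i}(z)\,dz$.

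\emph{Telescoping for $F$.} For $F$ I would imitate the computation in the proof of Lemma~\ref{WB1 lem}. Write $H_{m}=\sum_{l=1}^{m}J^{h_{l}}$, $\ell=k+n-1$, and set $F_{1}=J^{e_{-\alpha_1}}$, $F_{m}=\,:(\ell\der+H_{m})F_{m-1}:$, so that $F=F_{n-1}$. From the formula above, $\int S^{\alpha_i}(z)\,dz\cdot J^{h_{l}}=\alpha_i(h_{l})\psi_{\alpha_i}$, hence $\int S^{\alpha_i}(z)\,dz\cdot H_{m}=\bigl(\sum_{l=1}^{m}\alpha_i(h_{l})\bigr)\psi_{\alpha_i}$, which vanishes for all $m\in\{2,\dots,n-1\}$ except $m=i-1$ (value $-\psi_{\alpha_i}$) and $m=i$ (value $+\psi_{\alpha_i}$) when $i\geq3$, and except $m=2$ (value $+\psi_{\alpha_2}$) when $i=2$, the ``missing'' slot $m=1$ not occurring in $F$. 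Likewise $\int S^{\alpha_i}(z)\,dz\cdot J^{e_{-\alpha_1}}=0$ for $i\geq3$, while $\int S^{\alpha_2}(z)\,dz\cdot J^{e_{-\alpha_1}}=-c^{\alpha_2}_{\alpha_1+\alpha_2,\,e_{-\alpha_1}}\,\psi_{\alpha_1+\alpha_2}$. Expanding $\int S^{\alpha_i}(z)\,dz\cdot F$ through the recursion, the slots $m>i$ contribute nothing and carry the derivation inside, $\int S^{\alpha_i}(z)\,dz\cdot F_{m}=\,:(\ell\der+H_{m})(\int S^{\alpha_i}(z)\,dz\cdot F_{m-1}):$, so the whole claim reduces to the single local identity $\int S^{\alpha_i}(z)\,dz\cdot F_{i}=0$ for $i\geq3$, and $\int S^{\alpha_2}(z)\,dz\cdot F_{2}=0$ for $i=2$.

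\emph{The local identity.} This is where the mechanism of Lemma~\ref{WB1 lem} enters: the two surviving Leibniz terms are recombined by means of the relation $\der S^{\alpha}(z)=-(k+n)^{-1}:J^{t_{\alpha}}(z)S^{\alpha}(z):$, which is the third formula of Proposition~\ref{Screening prop} rewritten via $[e_{\alpha},e_{-\alpha}]=(e_{\alpha}|e_{-\alpha})t_{\alpha}$. For $\g=\slf_{n}$ with the trace form one has $t_{\alpha_i}=h_{i}=H_{i}-H_{i-1}$, so for $i\geq3$ the $\der S^{\alpha_i}$ produced by passing $\ell\der$ through, together with the terms coming from the $H_{i}$- and $H_{i-1}$-slots, cancel exactly as the analogous three terms cancel in Lemma~\ref{WB1 lem}; the scalars match because $\ell=k+n-1$ and $h^{\vee}=n$. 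For $i=2$ one uses in addition $t_{\alpha_1+\alpha_2}=h_1+h_2$ and $H_2=J^{h_1+h_2}$, together with the fact that $\der S^{\alpha_1+\alpha_2}(z)$ contains both $:J^{h_1+h_2}(z)S^{\alpha_1+\alpha_2}(z):$ (from $[e_{\alpha_1+\alpha_2},e_{-\alpha_1-\alpha_2}]=(e_{\alpha_1+\alpha_2}|e_{-\alpha_1-\alpha_2})(h_1+h_2)$) and $:J^{e_{-\alpha_1}}(z)S^{\alpha_2}(z):$ (from $[e_{\alpha_2},e_{-\alpha_1-\alpha_2}]\in\C e_{-\alpha_1}$).

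\emph{The hard part.} The step I expect to be the obstacle is precisely this $i=2$ case. Because the class $[\alpha_2]=\{\alpha_1+\alpha_2,\alpha_2\}$ is nontrivial, the screening $S^{\alpha_2}$ carries two components $S^{\alpha_2}$ and $S^{\alpha_1+\alpha_2}$; the interaction with the innermost field $J^{e_{-\alpha_1}}$ (and, through $\der S^{\alpha_1+\alpha_2}$, even with the $\g_0$-current $J^{e_{\alpha_1}}$ that underlies $E$) mixes the two, and the normal-ordered product $:S^{\alpha_2}J^{e_{-\alpha_1}}:$ has to be reordered against $\der S^{\alpha_1+\alpha_2}$ before the cancellation becomes visible. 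Keeping careful account of these quasi-(anti)commutativity corrections — which are not negligible and in fact supply the terms that produce the final zero — is where the work lies; by comparison $E$ is immediate, and the cases $i\geq3$ follow the template of Lemma~\ref{WB1 lem}.
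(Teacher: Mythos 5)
Your route is exactly the paper's route: $E$ is annihilated because the only screening component that fails to commute with $J^{e_{\alpha_1}}$ is $S^{\alpha_1+\alpha_2}$, which does not occur among the operators $\int S^{\alpha_i}(z)\,dz$, $i=2,\ldots,n-1$; and for $F$ the paper performs precisely your telescoping (with the same $F_m$), reducing everything to the local identity $[{S^{\alpha_i}}_{\la}F_i]_{\la=0}=0$, which it then checks using the $\der S^{\alpha}$ identities from Proposition \ref{Screening prop}. The only thing your proposal does not actually do is that final check, and the step you flag as the obstacle ($i=2$) is in fact no harder than $i\geq3$: the Wick formula gives
\begin{align*}
[{S^{\alpha_{2}}}_{\la}F_{2}]_{\la=0}=(k+n-1)\der S^{\alpha_{1}+\alpha_{2}}+:J^{h_{1}+h_{2}}S^{\alpha_{1}+\alpha_{2}}:+:S^{\alpha_{2}}J^{e_{-\alpha_{1}}}:,
\end{align*}
and the single quasi-commutativity correction $:S^{\alpha_{2}}J^{e_{-\alpha_{1}}}:\;=\;:J^{e_{-\alpha_{1}}}S^{\alpha_{2}}:+\der S^{\alpha_{1}+\alpha_{2}}$ (the bracket $[{S^{\alpha_{2}}}_{\la}J^{e_{-\alpha_{1}}}]=S^{\alpha_{1}+\alpha_{2}}$ being independent of $\la$) turns this into $(k+n)\der S^{\alpha_{1}+\alpha_{2}}+:(J^{h_{1}+h_{2}}S^{\alpha_{1}+\alpha_{2}}+J^{e_{-\alpha_{1}}}S^{\alpha_{2}}):$, which vanishes by the formula for $\der S^{\alpha_{1}+\alpha_{2}}$. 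Note that only $\der S^{\alpha_{1}+\alpha_{2}}$ is needed here, so the mixing with $J^{e_{\alpha_{1}}}$ through $\der S^{\alpha_{2}}$ that you worry about never enters. Two small points of care: your blanket identity $\der S^{\alpha}(z)=-(k+n)^{-1}:J^{t_{\alpha}}(z)S^{\alpha}(z):$ holds only for the singleton classes $i\geq3$ (for $[\alpha_{2}]$ one must use the full formula of Proposition \ref{Screening prop}, as you do acknowledge afterwards); and for $i\geq3$ it is the same single reordering of $S^{\alpha_{i}}$ past $H_{i-1}$ that converts the coefficient $k+n-1$ into $k+n=k+h^{\vee}$ and yields $-:((k+n)\der S^{\alpha_{i}}+J^{h_{i}}S^{\alpha_{i}})F_{i-2}:\;=0$, confirming your expected matching of scalars. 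Modulo writing out these short computations, your argument is complete and coincides with the paper's own proof.
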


\begin{proof}
It is enough to show that
\begin{align*}
[{S^{\alpha_{i}}}_{\la}E]_{\la=0}=[{S^{\alpha_{i}}}_{\la}F]_{\la=0}=0
\end{align*}
for all $i=2,\ldots,n-1$. By Proposition \ref{Screening prop},
\begin{align*}
&[S^{\alpha}_{m}, J^{h}_{(n)}]=\alpha(h)S^{\alpha}_{m+n}\quad\mathrm{for}\ \mathrm{all}\ \alpha\in\Pp,\ h\in\h,\\
&[S^{\alpha_{1}+\alpha_{2}}_{m}, J^{e_{\alpha_{1}}}_{(n)}]=\ S^{\alpha_{2}}_{m+n},\quad
[S^{\alpha}_{m}, J^{e_{\alpha_{1}}}_{(n)}]=0\quad\mathrm{for}\ \mathrm{all}\ \alpha\neq\alpha_{1}+\alpha_{2},\\
&[S^{\alpha_{2}}_{m}, J^{e_{-\alpha_{1}}}_{(n)}]=\ S^{\alpha_{1}+\alpha_{2}}_{m+n},\quad
[S^{\alpha}_{m}, J^{e_{-\alpha_{1}}}_{(n)}]=0\quad\mathrm{for}\ \mathrm{all}\ \alpha\neq\alpha_{2}
\end{align*}
and
\begin{align*}
&\der S^{\alpha_{1}+\alpha_{2}}(z)=-\frac{1}{k+n}:(J^{h_{1}+h_{2}}(z)S^{\alpha_{1}+\alpha_{2}}(z)+J^{e_{-\alpha_{1}}}(z)S^{\alpha_{2}}(z)):,\\
&\der S^{\alpha_{2}}(z)=-\frac{1}{k+n}:(J^{h_{2}}(z)S^{\alpha_{2}}(z)+J^{e_{\alpha_{1}}}(z)S^{\alpha_{1}+\alpha_{2}}(z)):,\\
&\der S^{\alpha_{i}}(z)=-\frac{1}{k+n}:J^{h_{i}}(z)S^{\alpha_{i}}(z):\quad\mathrm{for}\ i=3,\ldots,n-1.
\end{align*}
Therefore
\begin{align*}
[{S^{\alpha_{i}}}_{\la}E]_{\la=0}=0
\end{align*}
for all $i$. Next, we show $[{S^{\alpha_{i}}}_{\la}F]_{\la=0}=0$ for all $i$. Let $F_{1}(z)=J^{e_{-\alpha_{1}}}(z)$ and
\begin{align*}
F_{i}(z)=\ :((k+n-1)\der+J^{b_{i}}(z))\cdots((k+n-1)\der+J^{b_{2}}(z))J^{e_{-\alpha_{1}}}(z):.
\end{align*}
for $i\geq2$, where
\begin{align*}
b_{j}=\sum_{i=1}^{j}h_{i}
\end{align*}
for all $j$. First,
\begin{align*}
[{S^{\alpha_{i}}}_{\la}F]_{\la=0}
=\ :((k+n-1)\der+J^{b_{n-1}}(z))\cdots((k+n-1)\der+J^{b_{i+1}}(z))[{S^{\alpha_{i}}}_{\la}F_{i}]_{\la=0}:.
\end{align*}
Therefore it is enough to show that
\begin{align*}
[{S^{\alpha_{i}}}_{\la}F_{i}]_{\la=0}=0
\end{align*}
for all $i$. For $i=2$, we have
\begin{align*}
[{S^{\alpha_{2}}}_{\la}F_{2}]_{\la=0}=&\ (k+n-1)\der S^{\alpha_{1}+\alpha_{2}}+:J^{b_{2}}S^{\alpha_{1}+\alpha_{2}}:+:S^{\alpha_{2}}J^{e_{-\alpha_{2}}}:\\
=&\ (k+n)\der S^{\alpha_{1}+\alpha_{2}}+:(J^{b_{2}}S^{\alpha_{1}+\alpha_{2}}+J^{e_{-\alpha_{2}}}S^{\alpha_{2}}):\\
=&\ 0.
\end{align*}
For $i\geq3$, since $[{S^{\alpha_{i}}}_{\la}F_{i-2}]_{\la=0}=0$, we obtain that
\begin{align*}
[{S^{\alpha_{i}}}_{\la}F_{i}]_{\la=0}=&\ [{S^{\alpha_{i}}}_{\la}:((k+n-1)\der+J^{b_{i}})((k+n-1)\der+J^{b_{i-1}})F_{i-2}:]_{\la=0}\\
=&-(k+n-1):\der S^{\alpha_{i}}F_{i-2}:-:J^{b_{i}}S^{\alpha_{i}}F_{i-2}:+:S^{\alpha_{i}}J^{b_{i-1}}F_{i-2}:\\
=&\ -:((k+n)\der S^{\alpha_{i}}+J^{h_{i}}S^{\alpha_{i}})F_{i-2}:\\
=&\ 0.
\end{align*}
This completes the proof.
\end{proof}

\begin{theorem}\label{W2n thm}
If $k+n\neq0$,
\begin{align*}
\W^{k}(\slf_{n}, f_{sub};\Gamma)\simeq\W^{(2)}_{n},
\end{align*}
where $f_{sub}$ is a subregular nilpotent element of $\slf_{n}$ and $\Gamma$ is the good grading such that the corresponding weighted Dynkin diagram is \eqref{Dynkin diag}.
\end{theorem}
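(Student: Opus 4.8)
The plan is to establish the isomorphism first for generic $k$, where Theorem \ref{main1 thm} is available, and then to propagate it to every non-critical level by a deformation over $\C[\X]$ together with the injectivity of the Miura map.

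\emph{Generic $k$.} By Theorem \ref{main1 thm}, and in view of the explicit description of $\Del$, $\Pp$ and $[\Pp]$, of the vanishing $\g_{\frac12}=0$ (so that $\Fne=\C$), and of $\chi(e_{\alpha_1+\alpha_2})=0$, $\chi(e_{\alpha_i})\neq0$ $(2\le i\le n-1)$ recorded above, $\W^{k}(\slf_n,f_{sub};\Gamma)$ is isomorphic as a vertex algebra to $\bigcap_{i=2}^{n-1}\Ker\int S^{\alpha_i}(z)\,dz\subseteq\Vtau$, and this coincides with the image of the Miura map. Writing $\g_0=\slf_2'\times\z$, where $\slf_2'=\langle e_{\alpha_1},h_1,e_{-\alpha_1}\rangle$ and $\z$ is the centre of $\g_0$, Remark \ref{Vtau rem} gives $\Vtau\cong V^{k+n-2}(\slf_2)\otimes\Hi^{\z}$ with $\Hi^{\z}$ a rank $(n-2)$ Heisenberg vertex algebra, the shifted level of $\widehat{\slf}_2$ being $k+n$, which is non-critical exactly when $k\neq-n$. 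I would then feed in the Wakimoto free-field realization of $V^{k+n-2}(\slf_2)$: away from the critical level it is an embedding into a $\beta\gamma$-system tensored with a rank one Heisenberg, realizing $V^{k+n-2}(\slf_2)$ as the kernel of the Wakimoto screening of $\slf_2$, with $J^{e_{\alpha_1}}\mapsto\beta$, $J^{h_1}\mapsto-2{:}\gamma\beta{:}+(\text{Heisenberg})$ and $J^{e_{-\alpha_1}}\mapsto-{:}\gamma^2\beta{:}-(k+n-1)\partial\gamma+(\text{Heisenberg}){\cdot}\gamma$. Substituting these formulas into $E(z)=J^{e_{\alpha_1}}(z)$ and into the explicit product field $F(z)$ defined just before the theorem, and carrying out the linear change of Heisenberg coordinates that bosonizes $\gamma=e^{\xi}$, one should recognize $E(z)$ as $\E(z)=e^{\xi}(z)$ and $F(z)$ as $\F(z)={:}\Po(z)e^{-\xi}(z){:}$ — the factorized shapes of $F$ and of $\Po$, and the common coefficient $k+n-1$, are arranged precisely so that the two products match factor by factor — and at the same time convert the screening description $\bigcap_{i=2}^{n-1}\Ker\int S^{\alpha_i}(z)\,dz$, together with the Wakimoto screening carried along by the realization, into the Feigin--Semikhatov screening description of $\W^{(2)}_n$ by the operators $\int Q(z)\,dz$ and $\int A_j(z)\,dz$. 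Since $E,F$ already lie in $\bigcap_{i=2}^{n-1}\Ker\int S^{\alpha_i}(z)\,dz$ by the Lemma preceding the theorem, and both $\W^{k}(\slf_n,f_{sub};\Gamma)$ and $\W^{(2)}_n$ are strongly generated by $n+1$ fields whose conformal weights are $1,2,\ldots,n-1$ together with one further generator of weight $1$ and one of weight $n-1$ — for the $\W$-algebra by Corollary \ref{Miura cor} applied to a homogeneous basis of $\g^{f_{sub}}$, for $\W^{(2)}_n$ by its definition in \cite{FS} — a dimension count in each conformal weight forces $\W^{k}(\slf_n,f_{sub};\Gamma)\simeq\bigcap_{i=2}^{n-1}\Ker\int S^{\alpha_i}(z)\,dz\simeq\W^{(2)}_n$ for generic $k$, with $E\mapsto\E$ and $F\mapsto\F$. (For $n=2$ there are no screenings at all and $\W^{k}(\slf_2,f_{sub};\Gamma)=\Vtau\cong V^{k}(\slf_2)\simeq\W^{(2)}_2$ at once.)

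\emph{All $k$ with $k+n\neq0$.} By Lemma \ref{Miura1 lem} the Miura map $\mu$ is injective for every such $k$ (since $\slf_n$ is a simple Lie algebra), so it suffices to identify its image with $\W^{(2)}_n$. Working over $\C[\X]$ with $\hC_{\X}$, $\W^{\X}(\slf_n,f_{sub};\Gamma)$ and the Wakimoto realization of $V^{\tau_{\X}}(\slf_2)$ — all available away from the critical specialization — the identifications of the strong generators $X^{i}_{\X}=\mu_{\X}(W^{u_i}_{\X})$ with the generators of the $\C[\X]$-version of $\W^{(2)}_n$, in particular of $\mu_{\X}(E_{\X})$ and $\mu_{\X}(F_{\X})$ with $\E$ and $\F$, become a system of identities whose coefficients are polynomial in $\X$. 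These identities hold on the Zariski-dense set of generic $k$ by the previous step, hence identically over $\C[\X]$, and by Lemma \ref{Miura3 lem} (which absorbs the rescaling $X^{i}_{\X}|_{\X=\p}=\p\,\mu(W^{u_i})$) they specialize to the required relations for every $k\neq-n$; since the graded dimensions of $\W^{k}(\slf_n,f_{sub};\Gamma)$ and of $\W^{(2)}_n$ are both independent of $k$, the resulting surjection is an isomorphism. I expect the genuine obstacle to be the Wakimoto dictionary above: rewriting the product field $F(z)$ in free fields and matching it with $\F(z)$, and checking that the Wakimoto screening of $\slf_2$ together with the operators $\int S^{\alpha_i}(z)\,dz$ reproduces exactly the Feigin--Semikhatov screening set; everything else is formal.
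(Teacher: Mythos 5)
Your proposal follows essentially the same route as the paper: for generic $k$ it combines Theorem \ref{main1 thm} with the Wakimoto realization of the $\slf_{2}$-factor of $\g_{0}$ inside $\V_{\xi}$ (the paper's explicit homomorphism $\pi$), matches $E\mapsto\E$, $F\mapsto\F$ and the screening sets against the Feigin--Semikhatov description, and then passes to all $k\neq-n$ via the $\C[\X]$-form together with injectivity of the Miura map (Lemmas \ref{Miura1 lem} and \ref{Miura3 lem}). The ``Wakimoto dictionary'' you defer is exactly what the paper's map $\pi$ and the two lemmas preceding the theorem carry out (note the bosonization is an embedding of the $\beta\gamma$-system cut out by the extra screening $\int Q(z)\,dz$, not a linear change of Heisenberg coordinates, and only the inclusion $\pi(\W^{k})\subseteq\W^{(2)}_{n}$ is needed from the screenings); the only cosmetic difference is that you close the generic case with a strong-generation/character count, whereas the paper simply uses that $\E,\F$ generate $\W^{(2)}_{n}$ by definition.
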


\begin{proof}
Define a vertex algebra homomorphism
\begin{align*}
\pi\ :\ V^{\tau_{k}}(\g_{0})\longrightarrow\V_{\xi}
\end{align*}
by
\begin{align*}
J^{h_{1}}(z)&\quad\longmapsto\quad (k+n-2)\xi(z)+2\psi(z)+a_{1}(z),\\
J^{h_{2}}(z)&\quad\longmapsto\quad \xi(z)-\psi(z)+a_{2}(z),\\
J^{h_{i}}(z)&\quad\longmapsto\quad a_{i}(z)\hspace{8mm}(i=3,\ldots,n-1),\\
J^{e_{\alpha_{1}}}(z)&\quad\longmapsto\quad\e^{\int\xi(z)},\\
J^{e_{-\alpha_{1}}}(z)&\quad\longmapsto\quad-:((k+n-1)(\der+\xi(z))+\psi(z)+a_{1}(z))\psi(z)\e^{-\int\xi(z)}:.
\end{align*}
Then
\begin{align*}
\Img(\pi)\subset\Ker\int A_{1}(z)\ dz\cap\Ker\int Q(z)\ dz.
\end{align*}
Consider the Lie algebra decomposition
\begin{align*}
\g_{0}=\slf_{2}\oplus\z,
\end{align*}
where $\slf_{2}$ is the subalgebra of $\g_{0}$ spanned by $e_{\alpha_{1}},h_{1},e_{-\alpha_{1}}$ and $\z$ is the center of $\g_{0}$. If we restrict $\pi$ to the vertex subalgebra $V^{k+n-2}(\slf_{2})$ of $\Vtau$,  the map $\pi$ coincides with the vertex algebra homomorphism of the Wakimoto construction of $V^{k+n-2}(\slf_{2})$, which is injective (\cite{Fre}). Since
\begin{align*}
\Vtau=V^{k+n-2}(\slf_{2})\otimes V^{k+n}(\z)
\end{align*}
(see Remark \ref{Vtau rem}) and $V^{k+n}(\z)$ is the Heisenberg vertex algebra associated with $\z$, $\pi$ is injective and we can regard $\Vtau$ as a vertex subalgebra of $\V_{\xi}$ via $\pi$. Then $S^{\alpha_{i}}(z)$ acts on $\Vtau$ as
\begin{align*}
&S^{\alpha_{1}+\alpha_{2}}(z)=-:Q(z)\e^{-\frac{1}{k+n}\int a_{2}(z)-\int\xi(z)}:,\\
&S^{\alpha_{i}}=\e^{-\frac{1}{k+n}\int a_{i}(z)}\quad(i=2,\ldots,n-1).
\end{align*}
Therefore, for generic $k$,
\begin{align*}
\pi(\W^{k}(\slf_{n},f_{sub};\Gamma))
\subset&\bigcap_{i=2}^{n-1}\Ker\int S^{\alpha_{i}}(z)\ dz\cap\Ker\int A_{1}(z)\ dz\cap\Ker\int Q(z)\ dz\\
=&\bigcap_{i=2}^{n-1}\Ker\int A_{i}(z)\ dz\cap\Ker\int A_{1}(z)\ dz\cap\Ker\int Q(z)\ dz\\
=&\W^{(2)}_{n}
\end{align*}
because
\begin{align*}
\Ker\int S^{\alpha_{i}}(z)\ dz=\Ker\int A_{i}(z)\ dz
\end{align*}
for all $i=2,\ldots,n-1$. Moreover
\begin{align*}
\pi(E)=\mathcal{E},\quad
\pi(F)=\mathcal{F}
\end{align*}
and these are generating fields of $\W^{(2)}_{n}$ for all $k+n\neq0$. Hence $\pi|_{\W^{k}(\slf_{n},f_{sub})}$ induces a vertex algebra isomorphism
\begin{align*}
\W^{k}(\slf_{n},f_{sub})\simeq\W^{(2)}_{n}
\end{align*}
for generic $k$. By Lemma \ref{Miura1 lem}, this isomorphism holds for all $k+n\neq0$.
\end{proof}

\begin{rem}
For a subregular nilpotent element $f_{sub}$ in $\g=\slf_{n}$, there exists a good grading $\Gamma$ such that $\g_{0}=\h$. Thus, Theorem \ref{main2 thm} gives a free field realization of the $\W^{(2)}_{n}$-algebra.
\end{rem}


\begin{thebibliography}{ACGHR}
\bibitem[ACGHR]{ACGHR}H. R. Afshar, T. Creutzig, D. Grumiller, Y. Hikida, P. B. R\o nne.
\newblock Unitary W-algebras and three-dimensional higher spin gravities with spin one symmetry.
\newblock {\em J. High Energy Phys.}, (6), 063, 2014.

\bibitem[A1]{A05}T. Arakawa.
\newblock Representation theory of superconformal algebras and the Kac-Roan-Wakimoto Conjecture.
\newblock {\em Duke Math. J.}, 130(3):435--478, 2005.

\bibitem[A2]{A07}T. Arakawa.
\newblock Representation theory of $\mathscr{W}$-algebras.
\newblock {\em Invent. Math.}, 169(2):219--320, 2007.

\bibitem[A3]{A}T. Arakawa.
\newblock Introduction to W-algebras and their representation theory.
\newblock arXiv:1605.00138.

\bibitem[AKM]{AKM}T. Arakawa, T. Kuwabara, F. Malikov.
\newblock Localization of affine W-algebras.
\newblock {\em Comm. Math. Phys.}, 335(1):143--182, 2015.

\bibitem[Ber]{B}M. Bershadsky.
\newblock Conformal field theories via Hamiltonian reduction.
\newblock {\em Comm. Math. Phys.}, 139(1):71--82, 1991. 

\bibitem[BG]{BG}J. Brundan, S. M. Goodwin.
\newblock Good grading polytopes.
\newblock {\em Proc. Lond. Math. Soc., (3)}, 94(1):155--180, 2007.

\bibitem[CM]{CM}D. H. Collingwood,  W. M. McGovern.
\newblock Nilpotent orbits in semisimple Lie algebras.
\newblock Van Nostrand Reinhold Mathematics Series, {\em Van Nostrand Reinhold Co., New York}, 1993.

\bibitem[DK]{DK}A. De Sole, V. G. Kac.
\newblock Finite vs affine $W$-algebras.
\newblock {\em Jpn. J. Math.}, 1(1):137--261, 2006.

\bibitem[DKV]{DKV}A. De Sole, V. G. Kac, D. Valeri.
\newblock Structure of classical (finite and affine) W-algebras.
\newblock {\em J. Eur. Math. Soc.}, 18(9):1873--1908, 2016.

\bibitem[EK]{EK}A. G. Elashvili, V. G. Kac.
\newblock Classification of good gradings of simple Lie algebras.
\newblock {\em Lie groups and invariant theory}, 85--104, Amer. Math. Soc. Transl. Ser. 2, 213, {\em Amer. Math. Soc., Providence, RI}, 2005. 

\bibitem[FL]{FL}V. A. Fateev, S. L. Lukyanov.
\newblock Additional symmetries and exactly solvable models of two-dimensional conformal field theory.
\newblock {\em Sov. Sci. Rev. A. Phys.}, 15:1--117, 1990.

\bibitem[Fei]{F}B. L. Feigin.
\newblock Semi-infinite homology of Lie, Kac-Moody and Virasoro algebras.
\newblock {\em Uspekhi Mat. Nauk}, 39(2(236)):195--196, 1984. 

\bibitem[FF1]{FF90}B. L. Feigin, E. Frenkel.
\newblock Quantization of Drinfel'd-Sokolov reduction.
\newblock {\em Phys. Lett., B} 246(1--2):75--81, 1990.

\bibitem[FF2]{FF92}B. L. Feigin, E. Frenkel.
\newblock Affine Kac-Moody algebras at the critical level and Gel'fand-Dikii algebras. 
\newblock {\em Infinite analysis, Part A, B (Kyoto, 1991)}, 197--215, Adv. Ser. Math. Phys., 16, {\em World Sci. Publ., River Edge, NJ}, 1992. 

\bibitem[FS]{FS}B. L. Feigin, A. M. Semikhatov.
\newblock $\W^{(2)}_{n}$-algebras.
\newblock {\em Nuclear Phys., B} 698(3):409--449, 2004.

\bibitem[FOR]{FOR}J. M. Figueroa-O'Farrill, E. Ramos.
\newblock Classical $N=1$ $W$-superalgebras from Hamiltonian reduction.
\newblock {\em Comm. Math. Phys.}, 145(1):43--55, 1992.

\bibitem[Fre]{Fre}E. Frenkel.
\newblock Wakimoto modules, opers and the center at the critical level.
\newblock {\em Adv. Math.}, 195(2):279--327, 2004.

\bibitem[FBZ]{FBZ}E. Frenkel, D. Ben-Zvi.
\newblock Vertex algebras and algebraic curves. Second edition.
\newblock Mathematical Surveys and Monographs, 88.
\newblock {\em American Mathematical Society, Providence, RI}, 2004.

\bibitem[FKW]{FKW}E. Frenkel, V. G. Kac, M. Wakimoto. 
\newblock Characters and fusion rules for $W$-algebras via quantized Drinfel'd-Sokolov reduction.
\newblock {\em Comm. Math. Phys.}, 147(2):295--328, 1992.

\bibitem[Fu]{Fu}D. B. Fuchs.
\newblock Cohomology of infinite-dimensional Lie algebras.
\newblock Contemporary Soviet Mathematics. {\em Consultants Bureau, New York}, 1986.

\bibitem[Ha]{Ha}R. Hartshorne.
\newblock Algebraic geometry.
\newblock Graduate Texts in Mathematics, No.52. {\em Springer-Verlag, New York-Heidelberg}, 1977.

\bibitem[HD]{HD}R. Heluani, L. O. Rodr\'{i}guez D\'{i}az.
\newblock The Shatashvili-Vafa $G_{2}$ superconformal algebra as a quantum Hamiltonian reduction of $D(2,1;\alpha)$.
\newblock {\em Bull. Braz. Math. Soc. (N.S.)}, 46(3):331--351, 2015. 

\bibitem[Ho]{Ho}C. Hoyt.
\newblock Good gradings of basic Lie superalgebras.
\newblock {\em Israel J. Math.}, 192(1):251--280, 2012.

\bibitem[IMP]{IMP}K. Ito, J. O. Madsen, J. L. Petersen.
\newblock Free field representations of extended superconformal algebras.
\newblock {\em Nuclear Phys., B} 398(2):425--458, 1993.

\bibitem[Kac1]{K77}V. G. Kac.
\newblock Lie superalgebras
\newblock {\em Advances in Math.}, 26(1):8--96, 1977.

\bibitem[Kac2]{K90}V. G. Kac.
\newblock Infinite-dimensional Lie algebras. Third edition.
\newblock {\em Cambridge University Press, Cambridge}, 1990.

\bibitem[Kac3]{K98}V. G. Kac.
\newblock Vertex algebras for beginners. Second edition.
\newblock University Lecture Series, 10.
\newblock {\em American Mathematical Society, Providence, RI}, 1998.

\bibitem[KRW]{KRW}V. G. Kac, S.-S. Roan, M. Wakimoto.
\newblock Quantum reduction for affine superalgebras.
\newblock {\em Comm. Math. Phys.}, 241(2-3):307--342, 2003.

\bibitem[KW1]{KW1}V. G. Kac, M. Wakimoto.
\newblock Integrable highest weight modules over affine superalgebras and number theory.
\newblock {\em Lie theory and geometry}, 415--456, Progr. Math., 123, {\em Birkh\"auser Boston, Boston, MA}, 1994.

\bibitem[KW2]{KW2}V. G. Kac, M. Wakimoto.
\newblock Quantum reduction and representation theory of superconformal algebras.
\newblock {\em Adv. Math.}, 185(2):400--458, 2004.

\bibitem[KW3]{KW3} V. G. Kac, M. Wakimoto.
\newblock Corrigendum to: ``Quantum reduction and representation theory of superconformal algebras'' [Adv. Math. 185(2004)400--458].
\newblock {\em Adv. Math.}, 193(2):453--455, 2005. 

\bibitem[Kos]{Kos}B. Kostant.
\newblock Lie algebra cohomology and the generalized Borel-Weil theorem.
\newblock {\em Ann. of Math. (2)}, 74:329--387, 1961.

\bibitem[L]{L}H. Li.
\newblock Vertex algebras and vertex Poisson algebras.
\newblock {\em Commun. Contemp. Math.}, 6(1):61--110, 2004.

\bibitem[M]{M}I. M. Musson.
\newblock Lie superalgebras and enveloping algebras.
\newblock Graduate Studies in Mathematics, 131.
\newblock {\em American Mathematical Society, Providence, RI}, 2012.

\bibitem[P]{P}A. M. Polyakov.
\newblock Gauge transformations and diffeomorphisms.
\newblock {\em Internat. J. Modern Phys.}, A5(5):833--842, 1990.

\bibitem[Wang]{Wan}W. Wang.
\newblock Nilpotent orbits and $W$-algebras.
\newblock {\em Geometric representation theory and extended affine Lie algebras}, 71--105, Fields Inst. Commun., 59, {\em Amer. Math. Soc., Providence, RI}, 2011.

\bibitem[Watts]{Wat}G. M. T. Watts.
\newblock $W B_{n}$ symmetry, Hamiltonian reduction and $B(0, n)$ Toda theory.
\newblock {\em Nuclear Phys., B} 361(1):311--336, 1991.

\bibitem[Za]{Za}A. B. Zamolodchikov.
\newblock Infinite extra symmetries in two-dimensional conformal quantum field theory.
\newblock {\em Teoret. Mat. Fiz.}, 65(3):347--359, 1985. 

\bibitem[Zhu]{Zh}Y. Zhu.
\newblock Modular invariance of characters of vertex operator algebras.
\newblock {\em J. Amer. Math. Soc.}, 9(1):237--302, 1996.

\end{thebibliography}
\end{document}